\def\la{\langle}
\def\ra{\rangle}
\def\cF{{\cal F}}
\def\1{{\bf 1}}
\def\g{\gamma}
\def\a{\alpha}
\def\rn{\mathbb{R}^n}
\def\R{\mathbb{R}}
\def\re{\mathbb{R}}
\def\dist{{\rm dist}}
\newcommand{\Ex}{\mathbb{E}}
\newcommand{\EXP}[1]
{\mathbb{E}\!\left[#1\right] }
\def\argmin{\mathop{\rm argmin}}
\def\argmax{\mathop{\rm argmax}}
\def\be{\begin{equation}}
\def\ee{\end{equation}}
\def\bit{\begin{itemize}}
\def\eit{\end{itemize}}
\newcommand{\remove}[1]{}
\theoremstyle{plain}
\newtheorem{theorem}{Theorem}[section]
\newtheorem{lemma}[theorem]{Lemma}
\newtheorem{corollary}[theorem]{Corollary}
\theoremstyle{definition}
\newtheorem{assumption}[theorem]{Assumption}
\theoremstyle{remark}
\icmltitlerunning{Randomized Feasibility Methods for Constrained Optimization with Adaptive Step Sizes}
\begin{document}

\twocolumn[
  \icmltitle{Randomized Feasibility Methods for Constrained \\
  Optimization with Adaptive Step Sizes}

  % It is OKAY to include author information, even for blind submissions: the
  % style file will automatically remove it for you unless you've provided
  % the [accepted] option to the icml2026 package.

  % List of affiliations: The first argument should be a (short) identifier you
  % will use later to specify author affiliations Academic affiliations
  % should list Department, University, City, Region, Country Industry
  % affiliations should list Company, City, Region, Country

  % You can specify symbols, otherwise they are numbered in order. Ideally, you
  % should not use this facility. Affiliations will be numbered in order of
  % appearance and this is the preferred way.
  \icmlsetsymbol{equal}{*}

  \begin{icmlauthorlist}
    \icmlauthor{Abhishek Chakraborty}{yyy}
    \icmlauthor{Angelia Nedi\'c}{yyy}
    %\icmlauthor{Firstname3 Lastname3}{comp}
    %\icmlauthor{Firstname4 Lastname4}{sch}
    %\icmlauthor{Firstname5 Lastname5}{yyy}
    %\icmlauthor{Firstname6 Lastname6}{sch,yyy,comp}
    %\icmlauthor{Firstname7 Lastname7}{comp}
    %\icmlauthor{}{sch}
    %\icmlauthor{Firstname8 Lastname8}{sch}
    %\icmlauthor{Firstname8 Lastname8}{yyy,comp}
    %\icmlauthor{}{sch}
    %\icmlauthor{}{sch}
  \end{icmlauthorlist}

  \icmlaffiliation{yyy}{School of Electrical, Computer and Energy Engineering, Arizona State University, Tempe, USA}
  %\icmlaffiliation{comp}{Company Name, Location, Country}
  %\icmlaffiliation{sch}{School of ZZZ, Institute of WWW, Location, Country}

  \icmlcorrespondingauthor{Abhishek Chakraborty}{achakr61@asu.edu}
  \icmlcorrespondingauthor{Angelia Nedi\'c}{Angelia.Nedich@asu.edu}

  % You may provide any keywords that you find helpful for describing your
  % paper; these are used to populate the "keywords" metadata in the PDF but
  % will not be shown in the document
  \icmlkeywords{Randomized feasibility, Adaptive methods, Parameter-free step sizes}

  \vskip 0.3in
]

% this must go after the closing bracket ] following \twocolumn[ ...

% This command actually creates the footnote in the first column listing the
% affiliations and the copyright notice. The command takes one argument, which
% is text to display at the start of the footnote. The \icmlEqualContribution
% command is standard text for equal contribution. Remove it (just {}) if you
% do not need this facility.

% Use ONE of the following lines. DO NOT remove the command.
% If you have no special notice, KEEP empty braces:
\printAffiliationsAndNotice{}  % no special notice (required even if empty)
% Or, if applicable, use the standard equal contribution text:
% \printAffiliationsAndNotice{\icmlEqualContribution}

\begin{abstract}
  We consider minimizing an objective function subject to constraints defined by the intersection of lower-level sets of convex functions. We study two cases: (i) strongly convex and Lipschitz-smooth objective function and (ii) convex but possibly nonsmooth objective function. To deal with the constraints that are not easy to project on, we use a randomized feasibility algorithm with Polyak steps and a random number of sampled constraints per iteration, while taking (sub)gradient steps to minimize the objective function. For case (i), we prove linear convergence in expectation of the objective function values to any prescribed tolerance using an adaptive stepsize. For case (ii), we develop a fully problem parameter-free and adaptive stepsize scheme that yields an $O(1/\sqrt{T})$ worst-case rate in expectation. The infeasibility of the iterates decreases geometrically with the number of feasibility updates almost surely, while for the averaged iterates, we establish an expected lower bound on the function values relative to the optimal value that depends on the distribution for the random number of sampled constraints. For certain choices of sample-size growth, optimal rates are achieved. Finally, simulations on a Quadratically Constrained Quadratic Programming (QCQP) problem, Support Vector Machines (SVM), and logistic regression with group fairness constraints demonstrate the computational efficiency of our algorithm compared to other state-of-the-art methods.
\end{abstract}

\section{Introduction}\label{sec:introduction}

We consider the constrained optimization problem
\begin{align}
    &\min f(x), \qquad \text{subject to } x \in X\cap Y \nonumber\\
    &\text{ with } X := \cap_{i =1}^m \{ x \in\rn\mid g_i(x) \leq 0 \}, \label{prob_eq}\tag{P} 
\end{align}
where the functions $f:\mathbb{R}^n\to \mathbb{R}$ and $g_i:\mathbb{R}^n\to\mathbb{R}$ for all $i =1, \ldots, m$ are convex, and $m$ can be a large number. We assume that the set $Y\subseteq\rn$ is closed, convex, and easy to project on (e.g., ball or box constraint), while $X$ is not.

Optimization problems with inequality constraints arise throughout machine learning, operations research, economics, and control. In machine learning, such constraints encode fairness~\cite{zafar2017fairness}, robustness, or structural restrictions on models and predictions \cite{cotter2019optimization}. In finance and engineering, constraints capture regulatory, budgetary, or physical limits on feasible decisions \cite{rockafellar2000optimization, krokhmal2002portfolio}. In energy systems and networked infrastructure, they model capacity, reliability, and safety requirements that must hold under nominal operating conditions \cite{zampara2025capacity, da2020reliability}. Constrained reinforcement learning similarly relies on state-action constraints to ensure safe policy behavior \cite{chow2018lyapunov, bai2023achieving}, while trust-region and stability constraints underpin many policy optimization methods \cite{schulman2015trust, achiam2017constrained}. Such constraints also frequently appear as tractable surrogates or relaxations of more complex chance-constrained formulations \cite{ahmed2014convex, nemirovski2007convex, zhang2023global}. These broad applications motivate the study of efficient and theoretically grounded methods for solving problem~\eqref{prob_eq} when the number of constraints is large.

\noindent \textbf{Related Works:}
The difficulty in solving problem~\eqref{prob_eq} depends on the structure of the objective function $f$ and the form of the constraint set $X$. For example, if the constraint $X$ consists of intersection of many sets each of which is easy to project on, then one can use random projections~\cite{wang2015incremental,wang2016stochastic}. When projection onto $X$ is difficult, alternative methods are required to minimize the objective while reducing infeasibility. We discuss such methods next.

In machine learning, constraints such as sparsity are often incorporated into the objective via regularization, but choosing an appropriate regularization parameter is challenging and can yield suboptimal solutions to the original problem. Other existing techniques to solve constrained problems is by introducing penalty terms for constraint violations, for example Huber loss~\cite{tatarenko2021smooth, nedich2023huber} or other barrier functions for deterministic constraints \cite{freund2004penalty, o2021log}, and stochastic constraints \cite{usmanova2024log}. Barrier penalty methods are a type of interior-point algorithm and have been studied in the context of variational inequalities (also applicable here) using the Alternating Direction Method of Multipliers (ADMM) with log-barrier penalty functions~\cite{yang2022solving}. Although ADMM-based methods can yield good solutions, they require careful hyperparameter tuning and solving a subproblem at each iteration, which can be computationally demanding. Primal-dual methods~\cite{zhang2022solving, li2024stochastic} and Arrow-Hurwicz-Uzawa schemes~\cite{he2022convergence, zhang2022near} offer computationally cheaper alternatives to ADMM for solving problem~\eqref{prob_eq}, though they may yield inferior performance in some cases. A key drawback of these methods is that all constraints must be handled jointly in the penalty or Lagrangian formulation, which becomes computationally expensive when the number of constraints is large. The Frank-Wolfe method~\cite{beznosikov2024sarah, nazykov2024stochastic, freund2016new, garber2015faster} or the conditional gradient method \cite{vladarean2020conditional} for constrained problems require solving a Linear Minimization Oracle which involves finding a minimum within the constraint set, which is not computationally efficient when the projection on the constraint set is complicated.

Randomized feasibility updates~\cite{polyak2001random} offer a computationally efficient way to handle hard-to-project constraints by subsampling constraints and performing feasibility updates toward their intersection. This approach achieves decay in the infeasibility gap at a geometric rate~\cite{nedic2010random, renegar2021different, chakraborty2024randomized} while avoiding the cost of evaluating all constraints at each iteration. Feasibility algorithms have been studied for problems with finitely many constraints~\cite{polyak2001random, nedic2011random, nedic2019random} and infinitely many constraints~\cite{chakraborty2024randomized, necoara2022stochastic}. Optimization problems with infinitely many constraints have also been addressed using duality and smoothing techniques~\cite{fercoq2019almost}, which avoid projections but incur an additional factor of $\ln(k)$ in the convergence rate compared to our random feasibility algorithm studied in the strongly convex setting.

In addition to feasibility enforcement, we address parameter-free step-size selection for solving convex optimization problems. When problem parameters such as strong convexity and Lipschitz smoothness constants are known, step sizes can be chosen accordingly~\cite{nedic2019random, chakraborty2024randomized}. When these parameters are unknown, adaptive step-size schemes are required, including Normalized Gradient Descent~\cite{shor2012minimization, nesterov2018lectures, levy2017online, grimmer2019convergence}, line-search methods~\cite{nesterov2015universal}, AdGD~\cite{malitsky2020adaptive, malitsky2024adaptive}, local-smoothness-based variants~\cite{latafat2024adaptive}, AdaGrad-Norm~\cite{levy2018online, ene2021adaptive, ward2020adagrad, traore2021sequential}, D-Adaptation~\cite{defazio2023learning}, Distance over Gradients (DoG)~\cite{ivgi2023dog}, UniXGrad~\cite{kavis2019unixgrad}, U-DoG~\cite{kreisler2024accelerated}, and Distance over Weighted Gradients (DoWG)~\cite{khaled2023dowg, khaled2024tuning}, etc. Existing adaptive methods typically require a gradient step followed by projection onto the feasible set. To the best of our knowledge, no fully parameter-free method exists for solving the constrained problem~\eqref{prob_eq} without projections while using computationally inexpensive randomized feasibility updates. For the first time, we study the combination of DoWG or its tamed variant (T-DoWG) with randomized feasibility updates, motivated by the strong empirical performance and parameter-free nature of the method. The Polyak step-size scheme~\cite{polyak1969minimization} is also a universal method but requires knowledge of the optimal objective value, which is generally unavailable in constrained optimization; hence, it is not considered here.

\noindent \textbf{Contributions:}
We connect the results from prior works~\cite{lewis1998error, bertsekas1999note, mangasarian1998error}, and show that a \emph{uniform} upper bound on dual multipliers exists under Strong Slater condition, without assuming compactness of $Y$.
%We relate global error bounds~\cite{lewis1998error} to Lagrange dual multipliers~\cite{bertsekas1999note, mangasarian1998error}, and unlike prior works, we explicitly derive a \emph{uniform} upper bound on the multipliers without assuming compactness of $Y$. 
%
Building on these results, we study the minimization of a strongly convex and Lipschitz-smooth objective function by combining gradient methods with randomized feasibility updates, establishing for the first time linear convergence up to an $\epsilon$ tolerance with adaptive step sizes. In contrast to prior work, we do not assume bounded subgradients of the constraint functions and instead prove this property holds surely. Furthermore, we introduce and analyze a novel strategy involving a random number of feasibility updates per iteration and examine the impact of different sampling distributions on convergence. Moreover, we propose the first fully parameter-free method for solving problem~\eqref{prob_eq} when $f$ is convex but possibly nonsmooth, by combining Distance over Weighted Subgradients (DoWS) with randomized feasibility updates, and provide convergence guarantees under a compactness assumption on $Y$. We then relax this assumption by studying Tamed-DoWS (T-DoWS) with randomized feasibility updates, proving boundedness of the iterates and establishing optimal convergence rates in expectation, along with guidance on choosing the feasibility sample size to achieve these rates. %Moreover, we also show the optimal convergence rate of the method \an{in expectation} and a way to choose the sample size of the feasibility updates such that the optimal rate is achieved. 

\noindent \textbf{Flow of the paper:} Section~\ref{sec:problem} introduces the assumptions, error bounds, and their connection to Lagrange dual multipliers. Section~\ref{sec:RandFeas} presents the randomized feasibility algorithm and its analysis. Section~\ref{sec:StrongConvex} studies gradient method and convergence for strongly convex and Lipschitz-smooth objective function, while Section~\ref{sec:adaptive_step} develops a fully parameter-free adaptive method for convex problems. Section~\ref{sec:simulation} reports simulation results for the problems on QCQP, SVM, and Logistic
Regression with Group Fairness Constraints, while Section~\ref{sec:conclusion} concludes the paper.\footnote{Code available at \url{https://github.com/AbhishekChak/Ada-method-random-feas.git}.}

\noindent \textbf{Notation:} 
We consider the Euclidean space \(\mathbb{R}^n\) with the standard inner product \(\langle x, y \rangle\) and the Euclidean norm \(\|x\| := \sqrt{\langle x, x \rangle}\). For a closed convex set \(X \subseteq \mathbb{R}^n\) and a point $x \in \mathbb{R}^n$, the Euclidean distance from  $x$ to the set $X$ is denoted by \(\mathrm{dist}(x, X) := \min_{z \in X} \|z - x\|\), while the projection of a point $x$ on the set $X$ is \(\Pi_X[x] := \argmin_{x \in X} \|x - \bar{x}\|^2\). For a scalar \(a \in \mathbb{R}\), we write \(a^+ := \max\{a, 0\}\). The expectation of a random variable is denoted by $\mathbb{E}[\cdot]$. If a random quantity depends on multiple random variables,
we will use the subscripts to clarify what the expectation refers to. The probability of a discrete random variable $\omega$ taking value $i$ is denoted by $\mathbb{P}[\omega=i]$. The abbreviation \textit{a.s.} stands for almost surely.

%%%%%%%%%%%%%%%%%%%%%%%%%%%%%%%%%%%%%%%%%%%%%%%%%%%%%%%%%%%%%%%%%%%%%%%%%%%%%%%%%%%%%%%%%%%%%%%%%%%%%%%%%%%%%%%%%%%%%%%%%%%%%%%%%%%%%%%%%%%%%%%%%%%%%%%%%%%%%%%%%%%%%%%%%%%%%%%%%%%%%%%%%%%%%%%%%%%%%%%%%%%%%%%%%%%%%%%%%%%%%%%%%%%%%%%%%%

\section{Assumptions and Error Bounds}\label{sec:problem}
We impose the following assumption on the constraint set.
\begin{assumption}\label{asum_set1}
The function $g_i: \R^n \rightarrow \R$ is convex on $\R^n$ for all $i=1,\ldots,m$. The constraint set $Y$ is closed and convex, and $X\cap Y$ is nonempty. 
\end{assumption}
Since each $g_i$ is convex, it is continuous on $\R^n$~\cite{bertsekas2003convex} [Proposition 1.4.6], which implies that the set $X$ is convex and closed. Moreover, for each $g_i$, the subdifferential set $\partial g_i(x)$ is non-empty for all $x \in \R^n$ and all $i =1,\ldots,m$~\cite{bertsekas2003convex} [Proposition 4.2.1]. Consequently, the subdifferential set $\partial g_i^{+}(x)$ of $g_i^{+}(x) = \max\{0,g_{i}(x)\}$ is also non-empty for all $i$.

We also impose the following assumption related to $\dist(x,X\cap Y)$ and violation of the constraint $X$.
\begin{assumption}\label{asum_err_bd}
Let $\omega$ be a random variable taking values in the set $\{1,\ldots,m\}$ with positive probabilities. Assume there exists a constant $c > 0$ such that for all $x \in Y$,
    \begin{align}
        \dist^2(x, X \cap Y) \leq c \, \Ex_{\omega}[(g_{\omega}^+(x))^2] . \nonumber
    \end{align}
\end{assumption}
The constant $c$ may depend on the distribution of $\omega$. Assumption~\ref{asum_err_bd} is closely related to the notion of global error bound~\cite{lewis1998error}, stating that: for a proper, closed, and convex function $h:\mathbb{R}^n \rightarrow \mathbb{R}\cup\{+\infty\}$, there exists a constant $\gamma > 0$ such that for all $x \in \mathbb{R}^n$,
\begin{align}
    \dist \left(x, \{u \in \mathbb{R}^n \mid h(u) \le 0\}\right)
    \le \gamma\, h^+(x). \nonumber
    %\label{gb_err_bd}
\end{align}
The global error bound is a generalization of Hoffman's error bound \cite{Hoffman1952, hoffman2003approximate}, which corresponds to the case when the function $h$ is piece-wise affine. Works \cite{hoffman2003approximate, li2013global} show that when the constraint functions are affine and the set $Y$ is polyhedral, the global error bound exists. Additionally, existence of global error bounds are shown for general convex functions \cite{li2013global, luo1994extension, lewis1998error, hu1989approximate}. To connect the global error bound with Assumption~\ref{asum_err_bd}, we let $G(x) = \max_{1 \le i \le m} g_i(x)$ and
note that the set $X=\cap_{i=1}^m \{ x\in\R^n \mid g_i(x) \leq 0 \}$ coincides with the set $\left\{ x\in\R^n \mid G(x) \leq 0 \right\}$. Define the function $h(x) = G(x) + \delta_Y(x)$ for all $x \in \R^n$,
% \begin{align}
%     \widehat g(x) = \max_{1\le i\le m} g_i(x) + \delta_Y(x) \qquad \text{for all } x \in \R^n, \nonumber
% \end{align}
where $\delta_Y(\cdot)$ is the characteristic function of the set $Y$, i.e., $\delta_Y(x) = 0$ if $x \in Y$, and $\delta_Y(x) = +\infty$ otherwise. Thus, $X\cap Y=\{x\in\R^n\mid h(x)\le 0\}$.
Under Assumption~\ref{asum_set1}, $h(x)$ is proper, closed, and convex, and if it admits a global error bound with constant $\gamma > 0$, then
\begin{align}
    \dist(x, X \cap Y) \leq \gamma h^+(x) \quad \text{ for all $x \in \R^n$}. 
    \label{global_err_bd}
\end{align}
Restricting $x$ to lie in the set $Y$ results in  $h (x) = G(x)$, with $G(x) = \max_{1 \le i \le m} g_i(x)$, yielding
\begin{align}
    \dist(x, X \cap Y) \leq \gamma \max_{1\le i \le m} g_i^+(x) \;\; \text{for all } x \in Y . \label{linear_regularity}
\end{align}
Work \cite{nedic2011random} [Section 2.2] shows that under equation~\eqref{linear_regularity}, Assumption~\ref{asum_err_bd} is satisfied with $c = \frac{\gamma^2}{\min_{1\le i\le m} \mathbb{P}[\omega = i]}$.
% \begin{align}
%     c = \frac{\gamma^2}{\min_{1\le i\le m} \mathbb{P}[\omega = i]}. \label{const_c_and_gamma}
% \end{align}
When the distribution of $\omega$ is uniform, the denominator is maximized ($\mathbb{P}[\omega = i] = 1/m$ for all $i$), yielding $c = m \gamma^2$; thus $c$ scales linearly with the number of constraints in this case. %For an infinite number of constraints, a relaxed projection problem can be used to obtain Assumption~\ref{asum_err_bd} (see Appendix~\ref{con_c_inf_consts}) -- \an{\bf I have a comment to share - Appendix is a bit off}.

The global error bound holds when the Strong Slater condition $0 \notin \operatorname{cl}(\partial h(h^{-1}(0)))$ is satisfied~\cite{lewis1998error} [Corollary~2(b)].
%$G(x) = \max_{1 \le i \le m} g_i(x)$.  
%Since $X = \{x \in \mathbb{R}^n \mid G(x) \le 0\}$ is convex and closed, the Slater condition for $G$ holds if and only if the Strong Slater condition holds -- \an{\bf Is this really true? What result supports this statement?}. 
A connection exists between the error bound constant $\g$ in relation~\eqref{global_err_bd}~\cite{bertsekas1999note,
mangasarian1998error} and
the optimal dual multipliers (for a related projection problem). Work~\cite{mangasarian1998error} [Definition 3.2] defines the Strong Slater condition without exploring when the condition is satisfied, while the analysis in~\cite{bertsekas1999note} [Proposition~3] yields the direct connection between the constant $\g$ and the multipliers, as discussed in Appendix~\ref{app_dul_mul_c_connec}.

\section{Randomized Feasibility Update}\label{sec:RandFeas}

This section focuses on the randomized feasibility update (cf.\ Algorithm~\ref{algo_rand_feas}) to bypass the projection on the set $X$.
%
%
%%%%%%%%%%%%%%%%%%%%%%%%%%%%%%%%%%%%
%   ALGORITHM Randomized feasibility STEPS
%
%%%%%%%%%%%%%%%%%%%%%%%%%%%%%%%%%%%%
\begin{algorithm}
		\caption{Randomized Feasibility}
		\label{algo_rand_feas}
		\begin{algorithmic}[1]
			\REQUIRE{$v_{k}$, $N_{k}$, deterministic step size $0 < \beta < 2$}
                \STATE \textbf{Initialization:} $z_{k}^0=v_{k}$
                \FOR{$i=1,2,\ldots,N_{k}$}
                \STATE \textbf{Sample:} Index $\omega_{k}^{i} \in \{1,\ldots,m\}$ uniformly
                \STATE \textbf{Compute:} Subgradient $d_{k}^{i} \in \partial g^+_{\omega_{k}^{i}}(z_k^{i-1})$ 
                \STATE \textbf{Update:} $z_{k}^i
   = \Pi_{Y} \left[z_{k}^{i-1} - \beta\, \frac{g^+_{\omega_{k}^{i}}(z_{k}^{i-1})}{\|d_{k}^{i}\|^2}\, d_{k}^{i}\right]$
                \ENDFOR
                \STATE \textbf{Output} $x_{k}=z_{k}^{N_{k}}$ .
		\end{algorithmic}
\end{algorithm}	
Note that Algorithm~\ref{algo_rand_feas} is called inside a main algorithm for minimizing the objective function $f$ (cf.\ Algorithms~\ref{algo_grad_descent} and \ref{algo_dows_feas}). The subscript $k$ used for the iterates of Algorithm~\ref{algo_rand_feas} represents the outer iteration index corresponding to gradient steps for the  objective function (Algorithms~\ref{algo_grad_descent} and \ref{algo_dows_feas}). The superscript $i$ is the iteration index for Algorithm~\ref{algo_rand_feas}. The algorithm requires an input point $v_k$ and the number of samples $N_k$. Unlike prior work~\cite{nedic2010random, nedic2019random, necoara2022stochastic, singh2024stochastic, chakraborty2024randomized, chakraborty2025randomized}, we allow $N_k$ to be either deterministic or random, drawn from a discrete set $\mathcal{I}_k\subseteq\mathbb{N}$ with distribution $\mathcal{D}_{k}$ (cf. Appendix~\ref{exp_calc_with_dist}).

The algorithm generates a sequence of iterates $\{z_k^i\}$ via $N_k$ subgradient steps. At each iteration $i$, it uniformly samples a constraint index $\omega_k^i \in \{1,\ldots,m\}$, computes a subgradient $d_{k}^{i} \equiv d_{k}^{i}(z_k^{i-1}) \in \partial g^+_{\omega_{k}^{i}}(z_k^{i-1})$ of the corresponding sampled constraint function at the current iterate, takes a subgradient step with the Polyak step size \cite{polyak1969minimization, polyak2001random} scaled by a parameter $\beta \in (0, 2)$, and projects onto the set $Y$. After $N_k$ iterations, the final iteration $z_k^{N_k}$ is the output of the method, denoted by $x_k$. This method provides an efficient and scalable approach for handling feasibility in the presence of uncertainty and data-driven constraints.

Let the initial iterate $x_0 \in Y$ be random with $\EXP{\|x_0\|} < \infty$. Define the sigma algebra $\cF_{0}=\{x_0\}$ and $\cF_{k}$, $\forall k \geq 1$, as
\begin{align}
    \cF_{k} = \{x_0\} &\cup \{w_t^i \cup N_t \mid 1 \leq i \leq N_t, 1 \leq t \leq k\} .\label{sigma_field}
\end{align}
The main optimization algorithm for the outer loop will be discussed in the subsequent sections. Next, we present a lemma that relates the distances of the output $x_k$ of Algorithm~\ref{algo_rand_feas} and the input $v_k$ from an arbitrary point $x \in X \cap Y$.
\begin{lemma}\label{lem_rand_feas}
Let Assumptions~\ref{asum_set1} and \ref{asum_err_bd} hold. Then, the iterates $v_k, x_k \in Y$ satisfy for all $x \in X \cap Y$:
\begin{itemize}
    \item [(a)] $\|x_k - x \|^2 \leq \|v_k-x\|^2$ surely for all $k \geq 1$. 
    \item[(b)]
    If the subgradients $\{d_k^i\}$ are bounded, i.e., $\|d_k^i\| \leq M_g$ for all $i$ and $k$, then we have a.s. for all $k \geq 1$,
    \begin{align*}
        &\EXP{\|x_k - x\|^2 \mid \widetilde \cF_{k-1}} \leq \|v_k - x\|^2 \\
        &- ((1-q)^{-N_k} - 1) \EXP{\dist^2(x_k, X \cap Y) \mid \widetilde \cF_{k-1}} ,\\
        &\EXP{\dist(x_k, X \cap Y) \mid \widetilde \cF_{k-1}} \leq (1-q)^{\frac{N_k}{2}} \|v_k - x\| ,
    \end{align*}
    where $\widetilde \cF_{k-1} = \cF_{k-1} \cup \{N_k\}$ and $q = \frac{\beta(2- \beta)}{c M_g^2}$ with $\beta \in (0,2)$ being the step size of Algorithm~\ref{algo_rand_feas} and $c$ is the constant in Assumption~\ref{asum_err_bd}.
    \end{itemize}
\end{lemma}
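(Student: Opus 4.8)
The plan is to analyze a single feasibility step first, establish a one-step contraction inequality, and then iterate it over $i=1,\dots,N_k$, being careful about the conditioning and the randomness of $N_k$.

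\textbf{Step 1 (single step, deterministic in the sampled index).} Fix $x\in X\cap Y$ and consider one update $z_k^i = \Pi_Y[z_k^{i-1} - \beta\, g^+_{\omega_k^i}(z_k^{i-1})\, d_k^i/\|d_k^i\|^2]$. Since $x\in Y$, projection is nonexpansive, so
\begin{align*}
\|z_k^i - x\|^2 \leq \Big\| z_k^{i-1} - x - \beta\, \frac{g^+_{\omega_k^i}(z_k^{i-1})}{\|d_k^i\|^2}\, d_k^i \Big\|^2 .
\end{align*}
Expanding the square and using the subgradient inequality for $g^+_{\omega_k^i}$ at $z_k^{i-1}$ together with $g^+_{\omega_k^i}(x)=0$ (because $x\in X$ so $g_{\omega_k^i}(x)\le 0$), i.e. $\langle d_k^i, z_k^{i-1}-x\rangle \ge g^+_{\omega_k^i}(z_k^{i-1})$, gives
\begin{align*}
\|z_k^i - x\|^2 \leq \|z_k^{i-1}-x\|^2 - \beta(2-\beta)\, \frac{(g^+_{\omega_k^i}(z_k^{i-1}))^2}{\|d_k^i\|^2} .
\end{align*}
Since $\beta\in(0,2)$, the correction term is nonnegative, so $\|z_k^i-x\|^2\le \|z_k^{i-1}-x\|^2$ surely; chaining from $i=0$ to $N_k$ yields part (a). (If $g^+_{\omega_k^i}(z_k^{i-1})=0$ the update is trivial/well-defined by convention and the inequality still holds.)

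\textbf{Step 2 (taking expectation over the sampled index).} For part (b), condition on the $\sigma$-field $\mathcal{G}_k^{i-1}$ generated by everything up to and including $z_k^{i-1}$ (and $N_k$). Using $\|d_k^i\|\le M_g$ and then taking expectation over the uniform choice of $\omega_k^i$, the key move is to invoke Assumption~\ref{asum_err_bd} (with the uniform distribution on $\{1,\dots,m\}$, which is exactly the sampling in Algorithm~\ref{algo_rand_feas}): $\Ex_\omega[(g^+_\omega(z_k^{i-1}))^2] \ge c^{-1}\dist^2(z_k^{i-1}, X\cap Y)$. Hence
\begin{align*}
\EXP{\|z_k^i-x\|^2 \mid \mathcal{G}_k^{i-1}} \leq \|z_k^{i-1}-x\|^2 - q\, \dist^2(z_k^{i-1}, X\cap Y),
\end{align*}
with $q=\beta(2-\beta)/(cM_g^2)$. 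Next, bound $\dist^2(z_k^{i-1},X\cap Y)$ below: writing $p=\Pi_{X\cap Y}[z_k^{i-1}]$, we have $\dist^2(z_k^{i-1},X\cap Y)=\|z_k^{i-1}-p\|^2 \ge \|z_k^{i-1}-x\|^2 - \|x - p\|^2$ is the wrong direction; instead use that $p$ is the closest point so $\dist^2(z_k^{i-1},X\cap Y)\le \|z_k^{i-1}-x\|^2$, which combined with the display gives
\begin{align*}
\EXP{\|z_k^i-x\|^2 \mid \mathcal{G}_k^{i-1}} \leq (1-q)\,\|z_k^{i-1}-x\|^2 .
\end{align*}
Taking full expectation conditional on $\widetilde{\mathcal F}_{k-1}$ and iterating over $i=1,\dots,N_k$ (valid since $N_k$ is $\widetilde{\mathcal F}_{k-1}$-measurable) yields $\EXP{\|x_k-x\|^2\mid\widetilde{\mathcal F}_{k-1}} \le (1-q)^{N_k}\|v_k-x\|^2$. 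Choosing $x=\Pi_{X\cap Y}[x_k]$ is not allowed inside the conditional expectation since it is random; instead apply the bound with the (still arbitrary) $x$, and separately note $\dist(x_k,X\cap Y)\le \|x_k-x\|$, so by Jensen $\EXP{\dist(x_k,X\cap Y)\mid\widetilde{\mathcal F}_{k-1}} \le \big(\EXP{\|x_k-x\|^2\mid\widetilde{\mathcal F}_{k-1}}\big)^{1/2} \le (1-q)^{N_k/2}\|v_k-x\|$, giving the third inequality.

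\textbf{Step 3 (the refined second inequality).} For the first displayed inequality in (b), I keep the per-step decrease in the sharper form and telescope without collapsing the $\dist^2$ term. From Step 2, $\EXP{\|z_k^i-x\|^2\mid\mathcal{G}_k^{i-1}} \le \|z_k^{i-1}-x\|^2 - q\,\dist^2(z_k^{i-1},X\cap Y)$. Iterating the recursion $a_i \le (1-q) a_{i-1}$ (in expectation) shows $\EXP{\dist^2(z_k^{i-1},X\cap Y)\mid\widetilde{\mathcal F}_{k-1}} \le (1-q)^{i-1}\|v_k-x\|^2$, equivalently $(1-q)^{-(i-1)}\EXP{\dist^2(z_k^{i-1},X\cap Y)} \le \|v_k-x\|^2$; but more directly, telescoping the sharper decrease from $i=1$ to $N_k$ gives
\begin{align*}
\EXP{\|x_k-x\|^2\mid\widetilde{\mathcal F}_{k-1}} \leq \|v_k-x\|^2 - q\sum_{i=1}^{N_k}\EXP{\dist^2(z_k^{i-1},X\cap Y)\mid\widetilde{\mathcal F}_{k-1}} .
\end{align*}
Then I lower-bound the sum using monotonicity of $\dist(z_k^i,X\cap Y)$ along the feasibility updates (which follows from part (a) applied with $x$ replaced by the relevant projection, or from the fact that $\dist(\cdot,X\cap Y)$ is nonincreasing along the iterates — established by the same nonexpansiveness argument), so each term is at least $\EXP{\dist^2(x_k,X\cap Y)\mid\widetilde{\mathcal F}_{k-1}}$, yielding a factor $N_k$. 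Combining $q N_k$ with the geometric refinement $(1-q)^{-N_k}-1 \ge qN_k$ (true since $(1-q)^{-1}\ge 1+q$... actually one needs the \emph{stronger} constant, so I instead telescope the multiplicative form: from $\EXP{\|z_k^i-x\|^2}\le(1-q)\EXP{\|z_k^{i-1}-x\|^2}$ and $\dist^2(z_k^i,\cdot)\le\|z_k^i-x\|^2$, sum $\sum_i (1-q)^{-i}\cdot[\text{decrease}]$) gives exactly the coefficient $(1-q)^{-N_k}-1$ in front of $\EXP{\dist^2(x_k,X\cap Y)\mid\widetilde{\mathcal F}_{k-1}}$ after using monotonicity of the distance. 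This bookkeeping — converting the multiplicative recursion into the stated additive bound with the precise coefficient — is the main obstacle; the geometric and nonexpansiveness estimates themselves are routine.

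Throughout, the only subtlety beyond standard projected-subgradient algebra is (i) correctly handling that $N_k$ may be random but is measurable with respect to $\widetilde{\mathcal F}_{k-1}$, so the tower property applies cleanly step by step, and (ii) making sure the point $x\in X\cap Y$ stays fixed (deterministic relative to the conditioning) while we nevertheless extract a distance bound via Jensen and the inequality $\dist(x_k,X\cap Y)\le\|x_k-x\|$.
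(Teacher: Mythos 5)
Your part (a) is exactly the paper's argument and is fine, but part (b) has a genuine gap, and it starts at the central inequality of your Step 2. From the one-step bound $\EXP{\|z_k^i-x\|^2\mid\mathcal{G}_k^{i-1}}\le\|z_k^{i-1}-x\|^2-q\,\dist^2(z_k^{i-1},X\cap Y)$ you conclude $\EXP{\|z_k^i-x\|^2\mid\mathcal{G}_k^{i-1}}\le(1-q)\|z_k^{i-1}-x\|^2$ by invoking $\dist^2(z_k^{i-1},X\cap Y)\le\|z_k^{i-1}-x\|^2$ — but that inequality points the wrong way: subtracting $q\,\dist^2$ only helps if $\dist^2\ge\|z_k^{i-1}-x\|^2$, which is false for $x\in X\cap Y$. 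The claimed contraction of $\|\cdot-x\|^2$ is in fact false in general: if $z_k^{i-1}$ is already feasible but $z_k^{i-1}\ne x$, then $g^+_{\omega}(z_k^{i-1})=0$, the iterate does not move, and $\|z_k^i-x\|=\|z_k^{i-1}-x\|$, contradicting a $(1-q)$ decay. Consequently your bound $\EXP{\|x_k-x\|^2\mid\widetilde\cF_{k-1}}\le(1-q)^{N_k}\|v_k-x\|^2$ and the derivation of the lemma's last inequality built on it do not stand. The correct object to contract (as the paper does) is the set distance itself: taking the minimum over $x\in X\cap Y$ in the surely-valid one-step bound and then using Assumption~\ref{asum_err_bd} gives $\EXP{\dist^2(z_k^i,X\cap Y)\mid\widetilde\cF_{k-1}}\le(1-q)\,\EXP{\dist^2(z_k^{i-1},X\cap Y)\mid\widetilde\cF_{k-1}}$; iterating this, bounding $\dist(v_k,X\cap Y)\le\|v_k-x\|$ only once at the end, and applying Jensen yields the stated $\EXP{\dist(x_k,X\cap Y)\mid\widetilde\cF_{k-1}}\le(1-q)^{N_k/2}\|v_k-x\|$.

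The first display of (b) with the precise coefficient $(1-q)^{-N_k}-1$ is likewise not established in your Step 3. Your telescoping plus monotonicity of $\dist(z_k^i,X\cap Y)$ only gives the weaker coefficient $qN_k$, and you explicitly defer the "bookkeeping" needed to upgrade it — moreover the multiplicative recursion you propose to use for that upgrade is the false contraction from Step 2. The paper's route closes this: run the $\dist^2$ contraction \emph{backward}, i.e.\ $\EXP{\dist^2(z_k^{i-1},X\cap Y)\mid\widetilde\cF_{k-1}}\ge(1-q)^{-(N_k-i+1)}\EXP{\dist^2(x_k,X\cap Y)\mid\widetilde\cF_{k-1}}$ (equivalently, lower-bound each term $\EXP{(g^+_{\omega_k^i}(z_k^{i-1}))^2\mid\widetilde\cF_{k-1}}$ by $\tfrac{1}{c}(1-q)^{-(N_k-i+1)}\EXP{\dist^2(x_k,X\cap Y)\mid\widetilde\cF_{k-1}}$ via the error bound), then sum the geometric series $\sum_{i=1}^{N_k}(1-q)^{-(N_k-i+1)}=\bigl((1-q)^{-N_k}-1\bigr)/q$; multiplying by $q$ and plugging into your telescoped inequality gives exactly the stated coefficient. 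Your conditioning/measurability handling of the random $N_k$ is fine and matches the paper; the missing ingredients are contracting the right quantity and the backward geometric weighting of the per-step decrements.
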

The proof follows arguments in~\cite{polyak1969minimization, polyak2001random, nedic2019random, chakraborty2025randomized}, with minor modifications to the sigma field to accommodate random sample sizes $N_k$ (cf. Appendix~\ref{lem_rand_feas_proof}). The proof of Lemma~\ref{lem_rand_feas}(a) relies on upper estimates for the distance between the iterate $z_k^i$ and any point $x \in X \cap Y$, using the properties of subgradient step and the convexity of the constraint functions along with the use of Polyak step sizes. The relations in Lemma~\ref{lem_rand_feas}(b) are obtained by utilizing Polyak step sizes, Jensen's inequality, and Assumption~\ref{asum_err_bd}. The last relation of the lemma shows a geometric decay in the distance from $x_k$ to the constraint set with respect to the number of samples $N_k$. Lemma~\ref{lem_rand_feas}(b) assumes bounded subgradients $\{d_k^i\}$, this assumption is unnecessary for certain step-size choices in the main optimization algorithm (cf.\ Corollary~\ref{cor_subgrad_bound} and Lemma~\ref{lem_bounded_itr_tdows}).

%%%%%%%%%%%%%%%%%%%%%%%%%%%%%%%%%%%%%%%%%%%%%%%%%%%%%%%%%%%%%%%%%%%%%%%%%%%%%%%%%%%%%%%%%%%%%%%%%%%%%%%%%%%%%%%%%%%%%%%%%%%%%%%%%%%%%%%%%%%%%%%%%%%%%%%%%%%%%%%%%%%%%%%%%%%%%%%%%%%%%%%%%%%%%%%%%%%%%%%%%%%%%%%%%%%%%%%%%%%%%%%%%%%%%%%%%%

\section{Strongly Convex and Lipschitz Smooth Objective Function}\label{sec:StrongConvex}

In this section, we consider a gradient-based algorithm for minimizing the objective function $f$ to solve problem~\eqref{prob_eq}. We impose the following assumptions on $f$.
\begin{assumption}\label{asum_lipschitz}
    The function $f:\re^n\to\re$ has Lipschitz continuous gradients over $Y$ with a constant $L>0$, i.e.,
   \begin{align}
       f(y)-f(x)\le \la \nabla f(x),y-x\ra+\frac{L}{2}\|y-x\|^2 \;\; \forall x,y\in Y. \nonumber
   \end{align}
\end{assumption}
\begin{assumption}\label{asum_strong_convex}
    The function $f:\re^n\to\re$ is strongly convex
   over $Y$ with the constant $\mu>0$, i.e.,
   \begin{align}
       f(y)-f(x)\ge \la \nabla f(x),y-x\ra+\frac{\mu}{2}\|y-x\|^2 \;\; \forall x, y \in Y . \nonumber
   \end{align}
\end{assumption}
Assumption~\ref{asum_strong_convex} ensures the existence of a unique minimizer $x^*$ to the problem~\eqref{prob_eq}. Next, we present our method.

Algorithm~\ref{algo_grad_descent} updates using the gradient of the objective function and its outputs are fed to Algorithm~\ref{algo_rand_feas} to reduce the feasibility violation with respect to the constraint set $X$. 
\begin{algorithm}
\caption{Gradient Method with Randomized Feasibility}
\label{algo_grad_descent}
\begin{algorithmic}[1]
\REQUIRE Random initialization $x_0 \in Y$, step size $\a_k > 0$
\FOR{$k = 0, 1, 2, \dots, T$}
    \STATE $v_{k+1} = \Pi_Y[x_k - \a_k \nabla f(x_k)]$
    %\STATE Pass $v_{k+1}$ and $N_{k+1} \in \mathcal{I}_{k+1}$ to Algorithm~\ref{algo_rand_feas} to obtain $x_{k+1}$
    \STATE $x_{k+1} = \text{Algorithm~\ref{algo_rand_feas}}(v_{k+1}, N_{k+1})$
\ENDFOR
\end{algorithmic}
\end{algorithm}
Under Assumptions~\ref{asum_lipschitz} and~\ref{asum_strong_convex}, prior work (e.g.,~\cite{nedic2019random}) established an $O(1/T)$ convergence rate of Algorithm~\ref{algo_grad_descent} with diminishing step sizes. In contrast, we prove linear convergence up to a prescribed tolerance using adaptive step sizes. We next present a generic lemma. %for Algorithm~\ref{algo_grad_descent}.

%Under Assumptions~\ref{asum_lipschitz} and \ref{asum_strong_convex}, convergence of Algorithm~\ref{algo_grad_descent} with diminishing step sizes $\a_k$ has been analyzed in all prior works (e.g., \cite{nedic2019random}), yielding an $O(1/T)$ rate for all $1 \leq k \leq T$ and $T \geq 1$. In contrast, we establish linear convergence upto a prescribed tolerance with adaptive step sizes. We next state a generic lemma for Algorithm~\ref{algo_grad_descent}.

%lemma for the gradient updates of Algorithm~\ref{algo_grad_descent}. 

%\colr{Can this lemma be moved to appendix?}
%
%
\begin{lemma}\label{lem-viter}
     Let Assumptions~\ref{asum_set1},~\ref{asum_lipschitz}, and~\ref{asum_strong_convex} hold.
     Then, the iterates $v_{k+1}$ and $x_k$ of Algorithm~\ref{algo_grad_descent} satisfy the following relation surely for all $y\in Y$ and $k\ge0$,
     \begin{align*}
         &\|v_{k+1} - y\|^2 \leq (1-\a_k \mu) \| x_{k} -y \|^2 \\
         &- (1 - \a_k L) \| v_{k+1} - x_{k} \|^2 - 2 \a_k (f(v_{k+1}) - f(y)).
     \end{align*}
\end{lemma}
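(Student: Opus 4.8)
The plan is to establish the three-term inequality by combining the standard descent analysis for the projected gradient step with strong convexity of $f$. The key observation is that $v_{k+1} = \Pi_Y[x_k - \a_k \nabla f(x_k)]$ is exactly one proximal/projected-gradient step from $x_k$, and $x_k \in Y$ by Lemma~\ref{lem_rand_feas}(a) (the feasibility updates keep iterates in $Y$), so both $x_k$ and $v_{k+1}$ lie in $Y$ and Assumptions~\ref{asum_lipschitz}--\ref{asum_strong_convex} apply at these points.

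First I would write the projection as a variational inequality: since $v_{k+1} = \argmin_{z \in Y} \|z - (x_k - \a_k \nabla f(x_k))\|^2$, the optimality condition gives $\la x_k - \a_k \nabla f(x_k) - v_{k+1}, y - v_{k+1}\ra \le 0$ for all $y \in Y$, equivalently $\la \nabla f(x_k), v_{k+1} - y\ra \le \frac{1}{\a_k}\la x_k - v_{k+1}, v_{k+1} - y\ra$. Then I would expand $\la x_k - v_{k+1}, v_{k+1} - y\ra = \frac12(\|x_k - y\|^2 - \|v_{k+1} - y\|^2 - \|v_{k+1} - x_k\|^2)$ via the three-point identity. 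This yields
\begin{align*}
    \|v_{k+1} - y\|^2 \le \|x_k - y\|^2 - \|v_{k+1} - x_k\|^2 - 2\a_k \la \nabla f(x_k), v_{k+1} - y\ra.
\end{align*}

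Next I would bound the inner product term $-2\a_k\la \nabla f(x_k), v_{k+1} - y\ra = -2\a_k\la \nabla f(x_k), v_{k+1} - x_k\ra - 2\a_k\la \nabla f(x_k), x_k - y\ra$. For the first piece, Assumption~\ref{asum_lipschitz} (descent lemma) gives $\la \nabla f(x_k), v_{k+1} - x_k\ra \ge f(v_{k+1}) - f(x_k) - \frac{L}{2}\|v_{k+1} - x_k\|^2$. For the second piece, strong convexity (Assumption~\ref{asum_strong_convex}) gives $\la \nabla f(x_k), x_k - y\ra \ge f(x_k) - f(y) + \frac{\mu}{2}\|x_k - y\|^2$. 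Substituting both, the $f(x_k)$ terms cancel, leaving $-2\a_k\la\nabla f(x_k), v_{k+1}-y\ra \le -2\a_k(f(v_{k+1}) - f(y)) + \a_k L\|v_{k+1}-x_k\|^2 - \a_k\mu\|x_k - y\|^2$. Plugging this into the displayed inequality and collecting terms gives exactly the claimed bound.

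I do not expect a serious obstacle here; this is a routine assembly of well-known inequalities. The one point requiring a little care is justifying that $x_k \in Y$ so that the $f$-assumptions (which hold only on $Y$) can be invoked at $x_k$ and $v_{k+1}$ — this follows because Algorithm~\ref{algo_rand_feas} only ever takes projections onto $Y$, so its output $x_k \in Y$, and $v_{k+1} = \Pi_Y[\cdot] \in Y$ by construction; the base case $x_0 \in Y$ is given. A secondary bookkeeping point is that the bound is claimed to hold \emph{surely}: every inequality used (projection VI, descent lemma, strong convexity) is deterministic given the iterates, and Lemma~\ref{lem_rand_feas}(a) holds surely, so no expectation is needed.
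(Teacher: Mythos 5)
Your proof is correct and follows essentially the same route as the paper: the paper's Lemma~\ref{lem-elem-grad} expands the projection inequality $\|\Pi_Y[u]-z\|^2\le\|u-z\|^2-\|\Pi_Y[u]-u\|^2$ (equivalent to your projection variational inequality plus the three-point identity), then bounds $\la\nabla f(x_k), x_k-y\ra$ by strong convexity and $\la\nabla f(x_k), v_{k+1}-x_k\ra$ by the descent lemma, exactly as you do. The only cosmetic difference is that the paper packages the argument as a generic lemma and then substitutes $Z=Y$, $w=v_{k+1}$, $x=x_k$, $z=y$, $\a=\a_k$.
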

The analysis follows standard arguments and the proof is given in Appendix~\ref{lem-viter-proof}. Since Algorithm~\ref{algo_rand_feas} generates points in $Y$ that approach $X$ but may remain infeasible, both $v_{k+1}$ and $x_{k+1}$ can violate the constraints. Consequently, setting $y = x^*$ in Lemma~\ref{lem-viter} may yield $f(v_{k+1}) - f(x^*) < 0$, preventing a direct application of standard gradient descent analysis and introducing additional analytical challenges.

It can be proved that under Assumptions~\ref{asum_set1}, \ref{asum_lipschitz}, and~\ref{asum_strong_convex} and $\a_k \in (0,\tfrac{1}{L}]$, the sequence of iterates $\{x_k\}$, $\{v_k\}$ and $\{z_k^i, i=1,\ldots,N_k\}$ are surely bounded, i.e., $\forall k \geq 1$,
\[\|x_k-x^*\|\le B_0+\|x^*\|, \qquad \|v_k-x^*\|\le B_0+\|x^*\|,\]
\[\|z_{k}^i-x^*\|\le B_0+\|x^*\| \quad \hbox{for all $i=1,\ldots,N_k-1$},\]
where $B_0$ is a constant defined formally in Lemma~\ref{lem-bounded-iterates}. As a consequence \cite{bertsekas2003convex}[Proposition~4.2.3], there exist (deterministic) scalars $M_g>0$ and $M_f>0$ such that the gradients $\|\nabla f(x_k)\| \leq M_f$ and the subgradients $\|d_k^i\| \leq M_g$ surely for all $i=1,\ldots,N_k$ and $k\ge 1$ (cf. Corollary~\ref{cor_subgrad_bound}). The prior works using randomized feasibility updates~\cite{nedic2019random, singh2024stochastic, singh2024mini, singh2024unified, necoara2022stochastic} have assumed boundedness of $\{d_k^i\}$.

%In the next lemma, we show that the iterates produced by Algorithms~\ref{algo_rand_feas} and~\ref{algo_grad_descent} are surely bounded.
%
%
%
%
%
%The proof is given in Appendix~\ref{lem-bounded-iterates-proof}. As a consequence of Lemma~\ref{lem-bounded-iterates},  we obtain that the subgradients $\{d_k^i\}$ in Algorithm~\ref{algo_rand_feas} are bounded surely. The prior works using randomized feasibility updates~\cite{nedic2019random, singh2024stochastic, singh2024mini, singh2024unified, necoara2022stochastic} have assumed it.

As shown in Appendix~\ref{thm_sc_conv_proof}, with exponentially weighted averages $\bar v_k$ and $\bar x_k$ of $\{v_t\}$ and $\{x_t\}$, and a constant stepsize $\alpha_k = \alpha \in (0, \frac{1}{L}]$, $f(\bar v_k)-f(x^*)$ admits a geometrically decaying upper bound surely, whereas $\EXP{f(\bar x_k)-f(x^*)}$ admits a geometrically decaying lower bound almost surely (depending on the number of samples $N_k$) for all $k \geq 1$. Moreover, Lemma~\ref{lem-viter} with $y=x_k$ yields $f(v_{k+1}) \leq f(x_k)$ surely for any $\alpha \in \left(0, \frac{2}{L} \right]$. If $\nabla f(x^*)=0$, the iterate $v_k$ converge geometrically fast surely; otherwise, each gradient step still decreases $f(v_{k+1}) < f(x_k)$, while the randomized feasibility step moves $f(x_{k+1})$ toward $f(x^*)$ in the expected sense. Using diminishing stepsizes $\alpha_k = \frac{4}{\mu(k+1)}$ yields $O(1/\sqrt{k})$ rate for $\mathbb{E}[|f(\tilde x_k) - f(x^*)|]$~\cite{nedic2019random}[Theorem~3], where $\tilde x_k$ is $\alpha_k$-weighted average of $x_k$. In contrast, with adaptive stepsizes, intelligent averaging, and sufficient number of feasibility samples $N_k$, $\mathbb{E}[|f(\bar x_k) - f(x^*)|]$ decays geometrically fast up to an error tolerance, as shown in the following theorem.

\begin{theorem}\label{thm_epsilon_tolerance}
    Let Assumptions~\ref{asum_set1}, \ref{asum_err_bd}, \ref{asum_lipschitz}, and \ref{asum_strong_convex} hold. Let $\epsilon > 0$ be arbitrary and let the step size be given by $\a_k = \min\left\{ \frac{1}{2(L-\mu)}, \frac{1}{L}, \frac{\epsilon}{2 \|\nabla f(x_k)\|^2} \right\}$. Also, assume that the sample size selection is such that $\EXP{(1-q)^{\frac{N_t}{2}}} \leq \frac{\epsilon}{(B_0 + \|x^*\|) M_f}$ for all $1 \leq t \leq k$. Then, Algorithm~\ref{algo_grad_descent} reaches an $\epsilon$-accuracy in expectation, i.e.,  $\EXP{|f(\bar x_{k}) - f(x^*)|} \leq \epsilon$, in the number of iterations $k \geq O \left( \ln \left( \frac{1}{\epsilon} \right) \right)$, where $\bar x_{k} = \frac{\sum_{t=1}^{k} (1- \bar \a \mu)^{k-t} \a_{t} x_{t}}{\sum_{s=1}^{k} \a_{s} (1- \bar \a \mu)^{k-s}}$ and $\bar \a = \min\left\{ \frac{1}{2(L-\mu)}, \frac{1}{L}, \frac{\epsilon}{2 M_f^2} \right\}$. The constants $L$, $\mu$, $q$, and $M_f$ are defined in Assumption~\ref{asum_lipschitz}, Assumption~\ref{asum_strong_convex}, Lemma~\ref{lem_rand_feas}, and Corollary~\ref{cor_subgrad_bound}, respectively, while the bound $B_0 + \|x^*\|$ comes from Lemma~\ref{lem-bounded-iterates}.
\end{theorem}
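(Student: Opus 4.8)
The plan is to combine the gradient-step inequality of Lemma~\ref{lem-viter} with the feasibility contraction of Lemma~\ref{lem_rand_feas}, telescope the resulting one-step estimate with geometric weights tuned to the rate $1-\bar\alpha\mu$ to obtain a \emph{sure} geometrically decaying bound on $f$ at an exponentially weighted average of the $v$-iterates, and then pass to the stated average $\bar x_k$ of the $x$-iterates, absorbing all residual terms into $\epsilon$. I would first record the range of the adaptive step size: since $\|\nabla f(x_k)\|\le M_f$ surely (Corollary~\ref{cor_subgrad_bound}), one has $\bar\alpha\le\alpha_k\le\bar\alpha_0:=\min\{\tfrac1{2(L-\mu)},\tfrac1L\}$ for all $k$; in particular $\alpha_k\le 1/L$, so the term $-(1-\alpha_kL)\|v_{k+1}-x_k\|^2$ in Lemma~\ref{lem-viter} is $\le0$ and can be dropped, $1-\alpha_k\mu\le1-\bar\alpha\mu$, and $\alpha_k\|\nabla f(x_k)\|^2\le\epsilon/2$.

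Next I would use Lemma~\ref{lem_rand_feas}(b), the bound $\|v_t-x^*\|\le B_0+\|x^*\|$ from Lemma~\ref{lem-bounded-iterates}, and the sample-size hypothesis to get $\EXP{\dist(x_t,X\cap Y)}\le(B_0+\|x^*\|)\EXP{(1-q)^{N_t/2}}\le\epsilon/M_f$ for $1\le t\le k$; this supplies the $\epsilon$-floor. Then Lemma~\ref{lem-viter} with $y=x^*$ (with the non-positive term dropped) together with $\|x_{t+1}-x^*\|^2\le\|v_{t+1}-x^*\|^2$ from Lemma~\ref{lem_rand_feas}(a) gives, surely for every $t\ge0$,
\[
2\alpha_t\bigl(f(v_{t+1})-f(x^*)\bigr)\le(1-\bar\alpha\mu)\|x_t-x^*\|^2-\|x_{t+1}-x^*\|^2 .
\]
Multiplying by $(1-\bar\alpha\mu)^{k-1-t}$ and summing $t=0,\dots,k-1$ telescopes the right-hand side to at most $(1-\bar\alpha\mu)^k\|x_0-x^*\|^2$. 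Dividing by $W_k:=\sum_{t=0}^{k-1}(1-\bar\alpha\mu)^{k-1-t}\alpha_t\ge\bigl(1-(1-\bar\alpha\mu)^k\bigr)/\mu$ and applying Jensen's inequality for $f$ at the corresponding geometric average $\bar v_k$ of the $v$-iterates would yield $f(\bar v_k)-f(x^*)\le\tfrac{\mu(1-\bar\alpha\mu)^k}{2\,(1-(1-\bar\alpha\mu)^k)}\|x_0-x^*\|^2$ surely. A point requiring care is that $f(v_{t+1})-f(x^*)$ may be negative, since the iterates lie in $Y$ but only approach $X$; this is handled by observing that the telescoped bound on the (possibly negative) numerator is a deterministic nonnegative quantity and $W_k$ is deterministic, so the division is legitimate as an upper bound.

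It then remains to transfer this to $\bar x_k$ and to establish the matching lower bound, both in expectation. For the lower bound, since $\bar x_k\in Y$ is a convex combination of the $x_t$, I would use $f(\bar x_k)\ge f(\Pi_{X\cap Y}[\bar x_k])-M_f\dist(\bar x_k,X\cap Y)\ge f(x^*)-M_f\dist(\bar x_k,X\cap Y)$, bound $\dist(\bar x_k,X\cap Y)$ by the corresponding weighted average of $\dist(x_t,X\cap Y)$ (convexity of the distance), reduce the random weights to deterministic geometric weights using $\alpha_t\in[\bar\alpha,\bar\alpha_0]$, and take expectations with the estimate above; this gives $\EXP{f(\bar x_k)}-f(x^*)\ge-\epsilon$ up to a geometrically small term. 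For the upper bound on $\EXP{f(\bar x_k)}-f(x^*)$ I would write $f(x_t)\le f(v_t)+M_f\|x_t-v_t\|$ with $\|x_t-v_t\|\le 2\dist(v_t,X\cap Y)\le 2\bigl(\dist(x_{t-1},X\cap Y)+\alpha_{t-1}\|\nabla f(x_{t-1})\|\bigr)$, run the same telescoping on the $v$-terms, and absorb the residual terms using $\EXP{\dist(x_{t-1},X\cap Y)}\le\epsilon/M_f$ and $\alpha_{t-1}\|\nabla f(x_{t-1})\|^2\le\epsilon/2$. Finally, choosing $k$ so that $(1-\bar\alpha\mu)^k\|x_0-x^*\|^2\le\epsilon$, i.e.\ $k=O(\log(1/\epsilon))$, drives the geometric terms below $\epsilon$ and, tracking the absolute constants through the step-size and sample-size thresholds, yields $\EXP{|f(\bar x_k)-f(x^*)|}\le\epsilon$.

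I expect the main obstacle to be this last transfer step. The clean telescoping naturally produces an average over the $v$-iterates with (as written) slightly shifted weight indices relative to $\bar x_k$, so one must control $\EXP{\|x_t-v_t\|}$ uniformly in $t$; this forces carrying the gradient-drift term $\alpha_t\|\nabla f(x_t)\|$ through the computation and paying for it out of the $\epsilon$-budget via $\alpha_t\|\nabla f(x_t)\|^2\le\epsilon/2$ and the geometric weighting, rather than bounding it crudely (which would leave only an $O(\sqrt\epsilon)$ floor). A pervasive secondary subtlety is that $\alpha_t$ and the averaging weights are themselves random, so every inequality of the form $1-\alpha_t\mu\le1-\bar\alpha\mu$ and every interchange of an expectation with a weighted sum must be set up before integrating, using $\alpha_t\in[\bar\alpha,\bar\alpha_0]$.
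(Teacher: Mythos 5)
Your overall architecture (geometric-weight telescoping via Lemma~\ref{lem-scalar-seq}-type summation, boundedness from Lemma~\ref{lem-bounded-iterates}, and the lower bound $\mathbb{E}[f(\bar x_k)]-f(x^*)\ge -M_f\,\mathbb{E}[\dist(\bar x_k,X\cap Y)]\ge-\epsilon$ from Lemma~\ref{lem_rand_feas}(b) and the sample-size hypothesis) matches the paper, and the lower-bound half is essentially the paper's argument. The gap is in your upper-bound half. You telescope with the function values evaluated at the pre-feasibility iterates $v_{t+1}$ and then transfer to $\bar x_k$ via $f(x_t)\le f(v_t)+M_f\|x_t-v_t\|$ with $\|x_t-v_t\|\le 2\bigl(\dist(x_{t-1},X\cap Y)+\alpha_{t-1}\|\nabla f(x_{t-1})\|\bigr)$. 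The drift piece $M_f\,\alpha_{t-1}\|\nabla f(x_{t-1})\|$ cannot be driven to $O(\epsilon)$ by the step-size rule: with $\alpha_{t-1}=\min\{\bar\alpha_0,\ \epsilon/(2\|\nabla f(x_{t-1})\|^2)\}$ one only gets $\alpha_{t-1}\|\nabla f(x_{t-1})\|\le\sqrt{\bar\alpha_0\epsilon/2}$ (the worst case being $\|\nabla f(x_{t-1})\|\asymp\sqrt{\epsilon}$), so the transfer contributes an $O(M_f\sqrt{\epsilon})$ floor. Your proposed remedy — ``paying for it via $\alpha_t\|\nabla f(x_t)\|^2\le\epsilon/2$ and the geometric weighting'' — does not close this: the drift terms appear with averaging weights that sum to one and carry no extra factor of $\alpha_t$, so neither the normalization by $S_k=\sum_t\alpha_t(1-\bar\alpha\mu)^{k-t}$ nor the geometric decay (which only kills the initial-condition term) can absorb them. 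The result of your route is $\epsilon$-accuracy only up to $O(\sqrt{\epsilon})$, or $O(\epsilon)$ with an uncontrolled constant $M_f/\inf_t\|\nabla f(x_t)\|$.

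The paper avoids the transfer altogether: starting from Lemma~\ref{lem-viter} it adds and subtracts $2\alpha_k f(x_k)$ and bounds $2\alpha_k\bigl(f(x_k)-f(v_{k+1})\bigr)$ by strong convexity, Cauchy--Schwarz, and Young's inequality, absorbing $\tfrac12\|v_{k+1}-x_k\|^2$ into the proximity term $-(1-\alpha_kL)\|v_{k+1}-x_k\|^2$ — this is exactly what the condition $\alpha_k\le\tfrac{1}{2(L-\mu)}$ is for, and it is the term you discard at the outset. This produces a one-step inequality in which the function value is evaluated directly at the feasibility-corrected iterate $x_k$, with residual $2\alpha_k^2\|\nabla f(x_k)\|^2\le\epsilon\,\alpha_k$; since this residual carries a factor $\alpha_k$, dividing the telescoped sum by $S_k$ turns it into exactly $\epsilon/2$ irrespective of the gradient magnitude, and the geometric term is made $\le\epsilon/2$ after $k=O(\log(1/\epsilon))$ iterations, yielding the claimed sure upper bound $f(\bar x_k)-f(x^*)\le\epsilon$ for the $\alpha_t$-weighted average $\bar x_k$ in the theorem. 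To repair your proof you should adopt this add-and-subtract step (and keep the $\|v_{k+1}-x_k\|^2$ slack) rather than attempting the $v$-to-$x$ transfer.
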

For all $k \ge 1$, the sample size $N_k$ depends on the support $\mathcal{I}_k \subset \mathbb{N}$ and the distribution $\mathcal{D}_k$, which may vary with $k$. Additional discussion and the proof of Theorem~\ref{thm_epsilon_tolerance} are provided in Appendix~\ref{epsilon_tolerance_proof}, while Appendix~\ref{exp_calc_with_dist} details the computation of $\mathbb{E}[(1-q)^{N_k/2}]$ for various distributions (e.g., Poisson, Uniform, and Binomial).
%
%
%The proof is provided in Appendix~\ref{epsilon_tolerance_proof}. Note that the sample size $N_k$ for all $k \geq 1$ depends on the support set $\mathcal{I}_k \subset \mathbb{N}$ and the distribution $\mathcal{D}_k$.
%
%
Note that the constant $\bar{\alpha}$ in Theorem~\ref{thm_epsilon_tolerance} depends on $M_f$ (see Corollary~\ref{cor_subgrad_bound}).
%\colr{See if any comment about it is made in the experiments section.}

% The constant $\bar{\alpha}$ in Theorem~\ref{thm_epsilon_tolerance} depends on $M_f$ (see Corollary~\ref{cor_subgrad_bound}). In practice, when the number of iterations $T \geq 1$ is fixed, the analysis holds true with the step size selection
% \[
% \bar{\alpha}(T) = \min\!\left\{ \frac{1}{2(L-\mu)}, \frac{1}{L}, \frac{\epsilon}{2 \max_{1 \le t \le T} \|\nabla f(x_t)\|^2} \right\}.
% \]

%%%%%%%%%%%%%%%%%%%%%%%%%%%%%%%%%%%%%%%%%%%%%%%%%%%%%%%%%%%%%%%%%%%%%%%%%%%%%%%%%%%%%%%%%%%%%%%%%%%%%%%%%%%%%%%%%%%%%%%%%%%%%%%%%%%%%%%%%%%%%%%%%%%%%%%%%%%%%%%%%%%%%%%%%%%%%%%%%%%%%%%%%%%%%%%%%%%%%%%%%%%%%%%%%%%%%%%%%%%%%%%%%%%%%%%%%%

\section{Parameter-free Adaptive Step Size for Convex Nonsmooth Objective Function}\label{sec:adaptive_step}

In the section, we present parameter-free approaches for solving problem~\eqref{prob_eq} without requiring the knowledge of any problem specific parameters. Here, we consider the following assumption on the objective function $f$.
\begin{assumption}\label{asum_convex}
    % The optimal value $f^*$ of problem~\eqref{prob_eq} is finite. Also, the function
    % $f:\mathbb{R}^n \to \mathbb{R}$ is convex and possibly nonsmooth, satisfying
    % \[
    % f(y) - f(x) \ge \langle s_f(x), y - x \rangle \quad \forall x,y \in \mathbb{R}^n,
    % \]
    % where $s_f(x) \in \partial f(x)$ is a subgradient at $x$.
    The function $f:\R^n \rightarrow \R$ is convex and potentially nonsmooth on $\R^n$, i.e., 
    \begin{align*}
        f(y)-f(x)\ge \la s_f(x),y-x\ra \qquad \text{for all } x,y\in \R^n ,
    \end{align*}
    where $s_f(x)\in\partial f(x)$ is a subgradient of the function $f$ at $x$. We also assume that optimal function value $f^*$ of problem~\eqref{prob_eq} is finite. 
\end{assumption}

\subsection{Distance over Weighted Subgradients (DoWS) with Randomized Feasibility}\label{subsec_dows}

For this subsection, we consider the following assumption on the constraint set $Y$.
\begin{assumption}\label{asum_set2}
    The constraint set $Y$ is bounded, i.e., $\max_{x,y \in Y}\|x-y\| \leq D$.
\end{assumption}
Assumption~\ref{asum_set2} ensures bounded subgradients of the objective function and the constraints~\cite{bertsekas2003convex}[Proposition~4.2.3], i.e., $\|s_f(x)\| \leq M_f$ for all $x \in Y$, and $\|d_k^i\| \leq M_g$ for all $i = 1, \ldots, N_{k}$ and all $k \geq 1$. 

We study the parameter-free adaptive stepsize scheme Distance over Weighted Gradients (DoWG)~\cite{khaled2023dowg,khaled2024tuning}, which estimates the set diameter and combines weighted subgradient norms to set the stepsize. With per-iteration projection on $X$, DoWG is universal, achieving $O(1/T)$ rates for Lipschitz-smooth objectives and $O(1/\sqrt{T})$ rates for Lipschitz (possibly nonsmooth) objectives. Without projection, the iterates may be infeasible, invalidating the descent lemma and the $O(1/T)$ rate. Thus, we consider a weaker assumption that $f$ is possibly nonsmooth, and show that DoWG attains the optimal $O(1/\sqrt{T})$ rate. In this setting, we use subgradients and combine them with Algorithm~\ref{algo_rand_feas} to control infeasibility, referring to the resulting method as Distance over Weighted Subgradients (DoWS) with randomized feasibility -- see Algorithm~\ref{algo_dows_feas}.

%Since $f$ may be nonsmooth, we use subgradients rather than gradients, combining them with Algorithm~\ref{algo_rand_feas} to reduce infeasibility. We refer to the modified DoWG with subgradients as Distance over Weighted Subgradients (DoWS) -- see Algorithm~\ref{algo_dows_feas}.
%
%
%
%
%%%%%%%%%%%%%%%%%%%%%%%%%%%%%%%%%%%%%%%%%%%%%%%%%%%%%%%%%%%%%%%%%%%%%%%%%%%%%%%%%%%%%%%%%%%%%%%%%%%%%%%%%%%%%%%%%%%%%%%%%%%%%%%%%%%%%%%%%%%%%%%%
\begin{algorithm}
\caption{DoWS with Randomized Feasibility}
\label{algo_dows_feas}
\begin{algorithmic}[1]
\STATE \textbf{Input:} $v_0 = v_1 \in Y$, estimates $p_{0} = 0$, $\bar r_{0} = r > 0$
\STATE \textbf{Pass:} $v_1$ and $N_{1} \in \mathcal{I}_{1}$ to Algorithm~\ref{algo_rand_feas} to obtain $x_1 \in Y$
\STATE \textbf{Equate:} $x_0 = x_1 \in Y$
\FOR{$k = 1, 2, \dots, T$}
    \STATE $\bar{r}_k = \max\left\{ \|x_k - x_0\|, \bar{r}_{k-1} \right\}$
    \STATE $p_k = p_{k-1} + \bar{r}_k^2 \left\| s_f(x_k) \right\|^2$
    %\STATE $\alpha_k = \dfrac{\bar{r}_k^2}{\sqrt{p_k}}$
    \STATE $v_{k+1} = \Pi_Y \left[ x_k - \alpha_k s_f(x_k) \right]$ with $\alpha_k = \dfrac{\bar{r}_k^2}{\sqrt{p_k}}$
    \STATE $x_{k+1} = \text{Algorithm~\ref{algo_rand_feas}}(v_{k+1}, N_{k+1})$
    %\STATE \textbf{Pass:} $v_{k+1}$ and $N_{k+1} \in \mathcal{I}_{k+1}$ to Algorithm~\ref{algo_rand_feas} to obtain $x_{k+1}$
\ENDFOR
\end{algorithmic}
\end{algorithm}
%%%%%%%%%%%%%%%%%%%%%%%%%%%%%%%%%%%%%%%%%%%%%%%%%%%%%%%%%%%%%%%%%%%%%%%%%%%%%%%%%%%%%%%%%%%%%%%%%%%%%%%%%%%%%%%%%%%%%%%%%%%%%%%%%%%%%%%%%%%%%%%%
%
%
%
%
%The algorithm starts with a small initial estimate $r$ of the set diameter $D$, with $0 < r < D$, and keeps track of the maximum distance $\bar r_k$ of the iterate $x_k$ from a random initial starting point $x_0 \in Y$ which satisfies $\EXP{\|x_0\|^2} < \infty$. We set $x_1 = x_0$. The variable $p_k$ accumulates the weighted combination of the subgradient norm squares. Finally, the adaptive step size $\alpha_k = \bar r_k^2/\sqrt{p_k}$ is computed and the gradient update is performed, producing the iterate $v_{k+1}$. Then, randomized feasibility update (Algorithm~\ref{algo_rand_feas}) is called with the sample size $N_{k+1}$ leading to the next iterate $x_{k+1}$.

The algorithm initializes with a small estimate $r$ of the set diameter $D$ ($0<r<D$) and tracks the maximum distance $\bar r_k$ of $x_k$ from a random initial point $x_0\in Y$ with $\mathbb{E}[\|x_0\|^2]<\infty$. We set $x_1=x_0$. The variable $p_k$ accumulates weighted subgradient norm squares, yielding the adaptive stepsize $\alpha_k=\bar r_k^2/\sqrt{p_k}$. A subgradient step produces $v_{k+1}$, followed by a randomized feasibility update (Algorithm~\ref{algo_rand_feas}) with sample size $N_{k+1}$ to obtain $x_{k+1}$.

We now present the convergence rates for the method.

\begin{theorem}\label{thm_adaptive_step}
    Let Assumptions~\ref{asum_set1}, \ref{asum_err_bd}, \ref{asum_convex}, and \ref{asum_set2}
    hold. Let $\{x_k\}$ be a sequence of iterates generated by Algorithm~\ref{algo_dows_feas}. For any $T \geq 1$ and $\tau = \argmin_{1 \leq k \leq T} \frac{\bar r_{k+1}^2}{\sum_{i=1}^k \bar r_i^2}$, the averaged iterate $\bar x_\tau = \frac{\sum_{k=1}^\tau \bar r_k^2 x_k} {\sum_{i=1}^\tau \bar r_i^2}$ satisfies the following:
    \begin{align*}
        &\EXP{|f(\bar x_\tau) - f^*|} \leq \max \{ A_1(T), \min \{A_2(T),A_3(T)\}\} ,\\
        &\text{where } A_1(T) = \frac{3 D M_f}{\sqrt{T}} \left(\frac{D}{r}\right)^{\frac{2}{T}} \ln \left( \frac{e D^2}{r^2} \right) , \\
        &A_2(T) = D M_f  \max_{1 \leq k \leq T} \EXP{(1-q)^{\frac{N_{k}}{2}}}, \\
        &A_3(T) = \frac{D M_f}{T} \left( \frac{D}{r} \right)^{\frac{2}{T}} \ln \left( \frac{e D^2}{r^2} \right) \sum_{k=1}^T \EXP{(1-q)^{\frac{N_{k}}{2}}} .
    \end{align*}
\end{theorem}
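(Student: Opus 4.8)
The plan is to combine the DoWG-style telescoping argument for the subgradient steps with the geometric infeasibility decay from Lemma~\ref{lem_rand_feas}, and then handle the sign issue $f(\bar x_\tau)-f^*$ could be negative by bounding $|f(\bar x_\tau)-f^*|$ through two separate inequalities (upper and lower on $f(\bar x_\tau)-f^*$). First I would apply Lemma~\ref{lem_rand_feas}(a) with $x=\Pi_{X\cap Y}[v_{k+1}]$ (or any fixed $x^*\in X\cap Y$, ideally a minimizer) to get $\|x_{k+1}-x^*\|^2\le\|v_{k+1}-x^*\|^2$ surely, and expand $\|v_{k+1}-x^*\|^2=\|x_k-\alpha_k s_f(x_k)-x^*\|^2$ (using nonexpansiveness of $\Pi_Y$ since $x^*\in Y$) to obtain
\begin{align*}
    \|x_{k+1}-x^*\|^2 \le \|x_k-x^*\|^2 - 2\alpha_k\langle s_f(x_k),x_k-x^*\rangle + \alpha_k^2\|s_f(x_k)\|^2.
\end{align*}
By convexity, $\langle s_f(x_k),x_k-x^*\rangle \ge f(x_k)-f^*$, but since $x_k$ may be infeasible I cannot simply drop this; instead I would write $f(x_k)-f^* = (f(x_k)-f(\Pi_{X\cap Y}[x_k])) + (f(\Pi_{X\cap Y}[x_k])-f^*) \ge -M_f\,\dist(x_k,X\cap Y)$ (the second term is nonnegative), trading the infeasibility for an additive error controlled by Lemma~\ref{lem_rand_feas}(b). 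Plugging in $\alpha_k=\bar r_k^2/\sqrt{p_k}$ and multiplying through by appropriate weights, I would telescope the sum $\sum_{k=1}^\tau$, using $\sum_k \alpha_k^2\|s_f(x_k)\|^2 = \sum_k \bar r_k^4\|s_f(x_k)\|^2/p_k \le 2\bar r_\tau^2\sqrt{p_\tau}$ (the standard "sum of $a_k/\sqrt{\sum_{j\le k}a_j}$" bound) and $\sqrt{p_\tau}\le \bar r_\tau M_f\sqrt{\tau}\le D M_f\sqrt{\tau}$, together with the diameter bound $\bar r_k\le D$ and the DoWG trick that replaces the unknown initial-distance ratio by the $(D/r)^{2/T}\ln(eD^2/r^2)$ factor.

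Next I would isolate $f(\bar x_\tau)-f^*$ from above: by convexity and the weighting $\bar x_\tau = \sum_{k\le\tau}\bar r_k^2 x_k / \sum_{k\le\tau}\bar r_k^2$, we get $f(\bar x_\tau)-f^* \le \frac{\sum_{k\le\tau}\bar r_k^2(f(x_k)-f^*)}{\sum_{k\le\tau}\bar r_k^2}$; since each $\alpha_k$ is $\widetilde\cF_{k-1}$-measurable (it depends on $x_k$, hence on the feasibility randomness up to step $k$), I would take conditional expectations carefully and then a total expectation, invoking the last bound of Lemma~\ref{lem_rand_feas}(b), $\EXP{\dist(x_k,X\cap Y)\mid\widetilde\cF_{k-1}}\le(1-q)^{N_k/2}\|v_k-x^*\|\le(1-q)^{N_k/2}D$, so the accumulated infeasibility error is $\le D M_f \sum_{k\le\tau}\EXP{(1-q)^{N_k/2}}$ times the relevant weight-ratio. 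Dividing by $\sum_{k\le\tau}\bar r_k^2$ and bounding that denominator from below by $\bar r_\tau^2$ and from above where needed, the main term yields $A_1(T)$ (the DoWG $O(1/\sqrt{T})$ part with the log factor) and the infeasibility term yields $A_3(T)$ after using the definition of $\tau$ as the minimizer of $\bar r_{k+1}^2/\sum_{i\le k}\bar r_i^2$ (which is exactly what converts $\frac{\bar r_{\tau+1}^2}{\sum_{i\le\tau}\bar r_i^2}\le\frac{1}{T}\sum_{k=1}^T\frac{\bar r_{k+1}^2}{\sum_{i\le k}\bar r_i^2}$ into the $1/T$ telescoped logarithmic factor). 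For the lower bound on $f(\bar x_\tau)-f^*$, i.e., to bound $f^*-f(\bar x_\tau)$, I would use $f^*-f(x_k)\le f(\Pi_{X\cap Y}[x_k])-f(x_k)\le M_f\dist(x_k,X\cap Y)$ directly (no telescoping needed), average with weights $\bar r_k^2$, take expectations, and use Lemma~\ref{lem_rand_feas}(b) once more to get $f^*-f(\bar x_\tau)\le D M_f \max_{1\le k\le\tau}\EXP{(1-q)^{N_k/2}}=A_2(\tau)$ — and also, since $\dist(x_k,X\cap Y)\le D$ trivially, one additionally has a bound via $A_3(T)$; hence the lower-side error is at most $\min\{A_2(\tau),A_3(T)\}$. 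Combining, $|f(\bar x_\tau)-f^*|\le\max\{A_1(T),\min\{A_2(\tau),A_3(T)\}\}$.

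The main obstacle I anticipate is the measurability bookkeeping around the adaptive stepsize together with the random sample sizes $N_k$: $\alpha_k$ depends on $\bar r_k$ and $s_f(x_k)$, which depend on the feasibility randomness of the $k$-th outer step, so the telescoping must be done with the right conditioning (conditioning on $\widetilde\cF_{k-1}$ before the $k$-th feasibility call, then peeling off $N_k$), and the DoWG-type algebraic manipulation — replacing the genuinely unknown ratio $\bar r_\tau^2/\bar r_1^2$ (or $\|x_1-x_0\|$ vs.\ $r$) by the explicit $(D/r)^{2/T}\ln(eD^2/r^2)$ factor via the weighted-AM/telescoping inequality and the choice of $\tau$ — is delicate and is where most of the real work lies. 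A secondary subtlety is that $v_1$ feeds Algorithm~\ref{algo_rand_feas} before the main loop and $x_0=x_1$, so the base case of the telescope and the role of $\bar r_0=r$ need to be tracked so that the logarithmic factor comes out with the stated constants; but this is routine once the recursion is set up.
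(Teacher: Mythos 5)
Your overall route is the same as the paper's: expand the projected subgradient step, pass the result through the feasibility update via Lemma~\ref{lem_rand_feas}(a), telescope with weights $\bar r_k^2$, bound $\sum_{k}\alpha_k\bar r_k^2\|s_f(x_k)\|^2\le 2\bar r_{T+1}^2\sqrt{p_T}$ and $\sqrt{p_T}\le\bar r_{T+1}M_f\sqrt{T}$, and handle the sign issue through $f^*-f(x_k)\le M_f\,\dist(x_k,X\cap Y)$ combined with Lemma~\ref{lem_rand_feas}(b), exactly as in Appendix~\ref{thm_adaptive_step_proof}. But one step, as you describe it, would fail to give the stated constants: you propose to use $\frac{\bar r_{\tau+1}^2}{\sum_{i\le\tau}\bar r_i^2}\le\frac1T\sum_{k=1}^T\frac{\bar r_{k+1}^2}{\sum_{i\le k}\bar r_i^2}$ and then bound the average. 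With $\bar r_k\equiv r$ for all $k$, the right-hand sum is the harmonic sum $\sum_{k\le T}1/k\approx\ln T$, so the ``min $\le$ average'' route inevitably produces an extra $\ln T$ (and in general a $D^2/r^2$ rather than $(D/r)^{2/T}$) factor and cannot yield $A_1(T)$ or $A_3(T)$ in the stated form. The paper instead bounds the \emph{minimum} directly via Lemma~\ref{lem_sum_seq} (a DoG-style weighted AM--GM/induction argument): $\min_{1\le k\le T}\frac{s_{k+1}}{\sum_{i\le k}s_i}\le\frac1T\left(\frac{s_{T+1}}{s_0}\right)^{1/T}\ln\left(\frac{e\,s_{T+1}}{s_0}\right)$ with $s_k=\bar r_k^2$, $s_0=r^2$; this lemma is the missing ingredient and is precisely how the tightened constants are obtained.

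A secondary deviation: you fold the infeasibility correction $f(x_k)-f^*\ge-M_f\,\dist(x_k,X\cap Y)$ into the upper-bound telescope. This is unnecessary and changes the form of the result: in the paper the bound $f(\bar x_\tau)-f^*\le A_1(T)$ holds \emph{surely}, with no expectation and no feasibility term, precisely because negative values of $f(x_k)-f^*$ only help an upper bound; the infeasibility enters only on the lower side, $\EXP{f(\bar x_\tau)-f^*}\ge-M_f\,\EXP{\dist(\bar x_\tau,X\cap Y)}$, where one estimate gives $A_2(\tau)$ and the other gives $A_3(T)$ through the weight ratio $\bar r_\tau^2/\sum_{k\le\tau}\bar r_k^2$ together with (again) Lemma~\ref{lem_sum_seq} --- not through ``$\dist\le D$ trivially,'' as you suggest. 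Your version would instead produce an $A_1(T)+A_3(T)$-type bound on the positive part, a structure different from the theorem, and would force you through the expectation/measurability issues with the random weights $\bar r_k^2$ that the paper's sure upper bound sidesteps.
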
 
The constants in the original DoWG analysis~\cite{khaled2023dowg} are not optimal and using Lemma~\ref{lem_sum_seq} (see also \cite{liu2023stochastic}[Lemma~30] and \cite{moshtaghifar2025dada}[Lemma~A.1]), the constants in Theorem~\ref{thm_adaptive_step} has been tightened. The bound $A_1(T)$ arises from the gradient update on the objective function, whereas $A_2(T)$ and $A_3(T)$ come from the analysis of the feasibility violation. Both $A_1(T)$ and $A_3(T)$ involve the quantity $\left(\frac{D}{r}\right)^{\frac{2}{T}}$, which tends to $1$ as $T$ increases and can be replaced by a modest constant for all $T \geq 1$. The proof and further discussion are provided in Appendix~\ref{thm_adaptive_step_proof}. The infeasibility bound depends on $\min \{A_2(T),A_3(T)\}$ and decreases at rate $O(1/T)$ provided $\sum_{k=1}^\tau \EXP{(1-q)^{\frac{N_{k}}{2}}}$ is uniformly bounded by a constant independent of the total number of iterations $T$. Such bounds hold for both deterministic and stochastic sampling schemes using arguments similar to~\cite{chakraborty2025randomized} (see Appendix~\ref{appendx_sum_bd}). For example, if $N_k = \lceil k^{\frac{1}{p}} \rceil$ for some $p > 0$ for all $k \geq 1$, then $\sum_{k=1}^\tau (1-q)^{\frac{N_{k}}{2}}$ is bounded by a constant. Consequently, Theorem~\ref{thm_adaptive_step} implies a worst-case rate of $O(1/\sqrt{T})$, up to logarithmic factors depending on the ratio between the set $Y$ diameter $D$ and the initial distance estimate $r$.

%%%%%%%%%%%%%%%%%%%%%%%%%%%%%%%%%%%%%%%%%%%%%%%%%%%%%%%%%%%%%%%%%%%%%%%%%%%%%%%%%%%%%%%%%%%%%%%%%%%%%%%%%%%%%%%%%%%%%%%%%%%%%%%%%%%%%%%%%%%%%%%%

\subsection{Tamed Distance over Weighted Subgradients (T-DoWS) with Randomized Feasibility}

Subsection~\ref{subsec_dows} assumes a bounded set $Y$ (Assumption~\ref{asum_set2}) to guarantee bounded subgradients and enable the convergence analysis of Algorithm~\ref{algo_dows_feas}. However, if $Y = \mathbb{R}^n$, these properties may fail. Our goal is therefore to remove Assumption~\ref{asum_set2} by ensuring bounded iterates. In addition, the aggressive stepsize $\alpha_k$ in Algorithm~\ref{algo_dows_feas} can accelerate early progress but may cause oscillations between feasibility and infeasibility, especially when the optimal solution to Problem~\eqref{prob_eq} lies on the boundary while the optimal value of the problem $\min_{x \in Y} f(x)$ is significantly smaller.

We modify Algorithm~\ref{algo_dows_feas} to obtain Algorithm~\ref{algo_tdows_feas} (see Appendix~\ref{app_algo_tdows}), in which the stepsizes are further scaled to yield a less aggressive sequence $\{\alpha_k\}$, defined as
\begin{align*}
    \alpha_k = \begin{cases}
        \frac{\bar{r}_k^2}{2 \sqrt{p_k} \ln \left( \frac{e p_k}{p_1} \right)} \quad &\text{if $p_0 = 0$,} \\
        \frac{\bar{r}_k^2}{\sqrt{2 p_k} \ln \left( \frac{e p_k}{p_0} \right)} \quad &\text{if $p_0 > 0$.}
    \end{cases}
\end{align*}
The method guarantees bounded iterates under the following sub-level set assumption on $f$, together with Assumption~\ref{asum_convex}, where $f^*$ denotes the optimal value of problem~\eqref{prob_eq}.
%
%

%%%%%%%%%%%%%%%%%%%%%%%%%%%%%%%%%%%%%%%%%%%%%%%%%%%%%%%%%%%%%%%%%%%%%%%%%%%%%%%%%%%%%%%%%%%%%%%%%%%%%%%%%%%%%%%%%%%%%%%%%%%%%%%%%%%%%%%%%%%%%%%%%%%%%%%%%%%%%%%%%%%%%%%%%%%%%%%%%%%%%%%%%%%%%%%%%%%%%%%%%%%%%%%%%%%%%%%%%%%%%%%%%%%%%%%%%%%%%%%%%%%%%%%%%%%%%%%%%%%%%%%%%%%%%%%%%%%%%%%%%%%%%%%%%%%%%%%%%%%%%%%%%%%%%%%%%%%%%%%%%%%%%%%%%%%%%%%%%%
\begin{figure*}[t]\centering
	\begin{subfigure}{0.32\textwidth}
		\includegraphics[width=\linewidth, height = 0.75\linewidth]
		{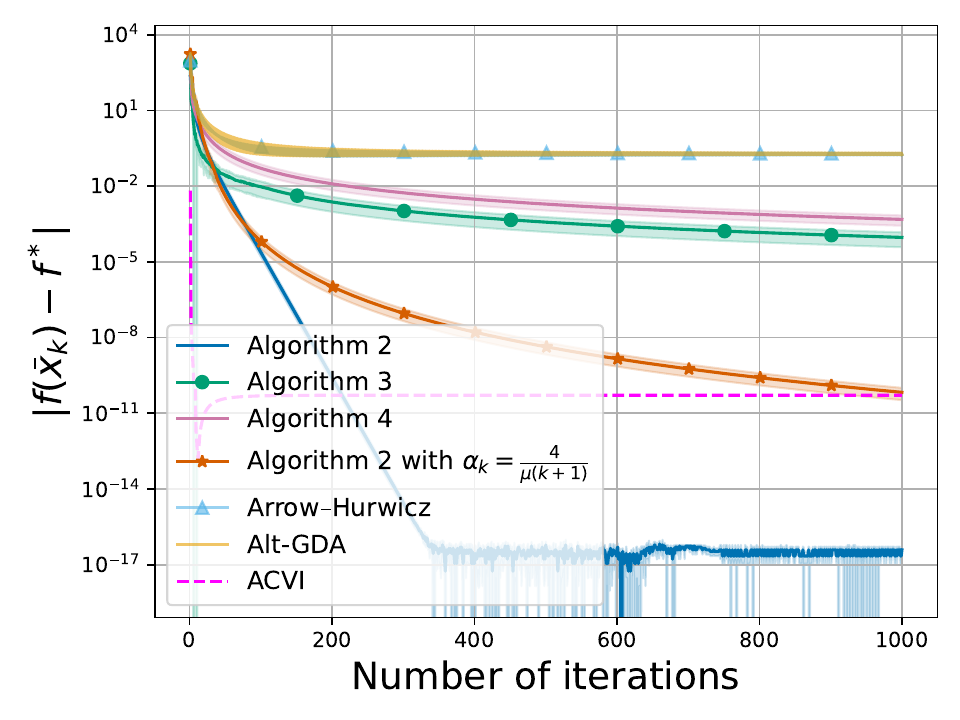}
		\caption{$f$ strongly convex, $f^*$ known}
		%\label{st_cvx_known_func}
	\end{subfigure}
	\begin{subfigure}{0.32\textwidth}
		\includegraphics[width=\linewidth,height = 0.75\linewidth]
		{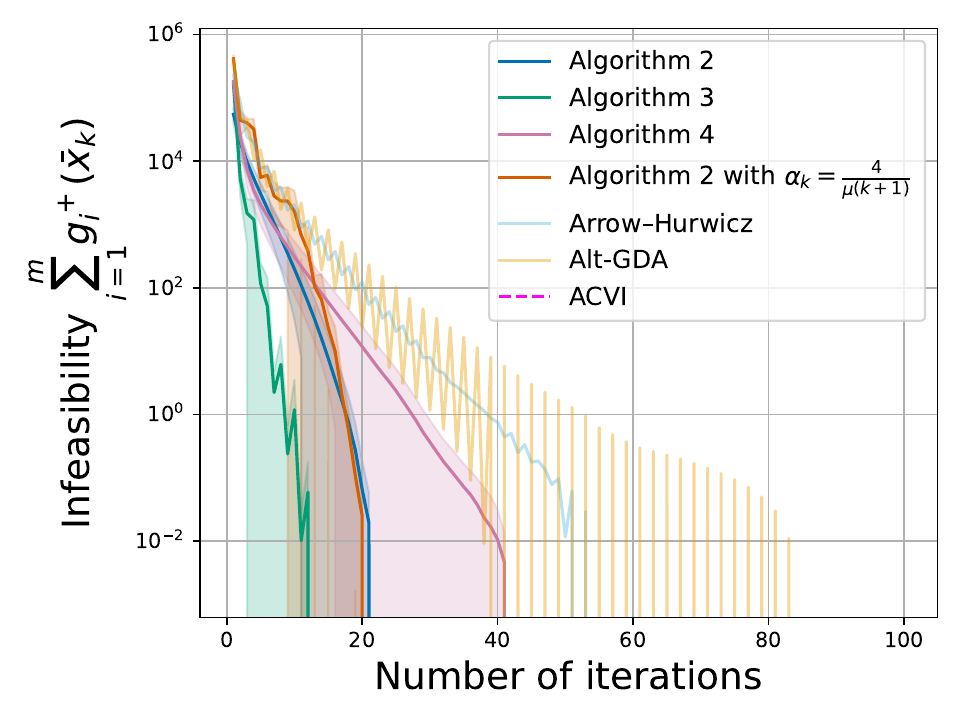}
		\caption{$f$ strongly convex, $f^*$ known}
		%\label{st_cvx_known_infeas}
	\end{subfigure}
    \begin{subfigure}{0.32\textwidth}
		\includegraphics[width=\linewidth, height = 0.75\linewidth]
		{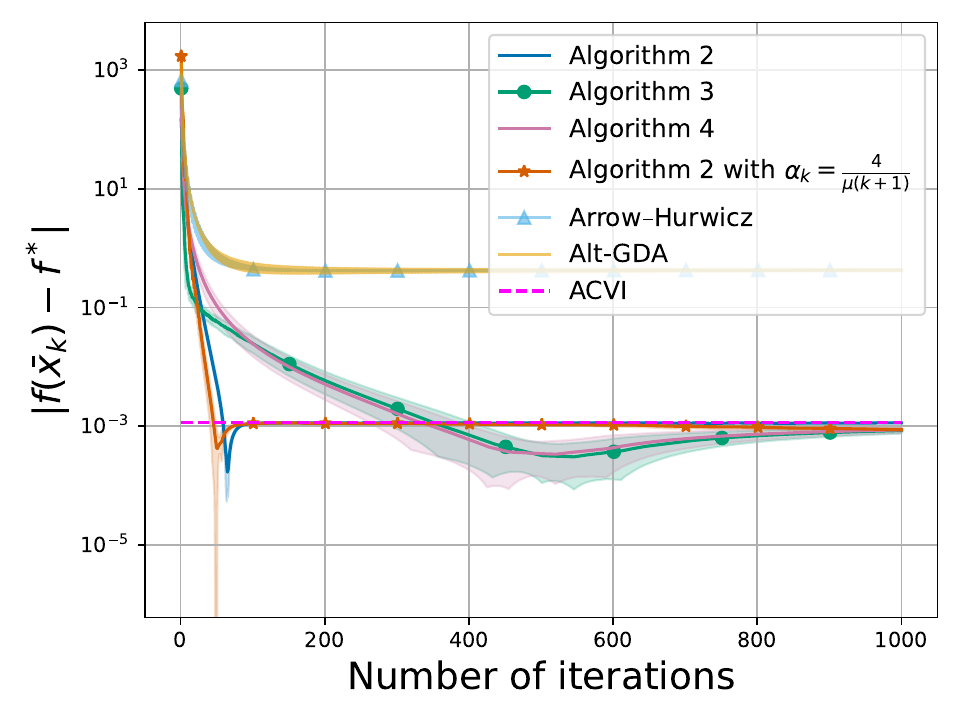}
		\caption{$f$ strongly convex, $f^*$ unknown}
		%\label{st_cvx_unknown_func}
	\end{subfigure}
	\begin{subfigure}{0.32\textwidth}
		\includegraphics[width=\linewidth,height = 0.75\linewidth]
		{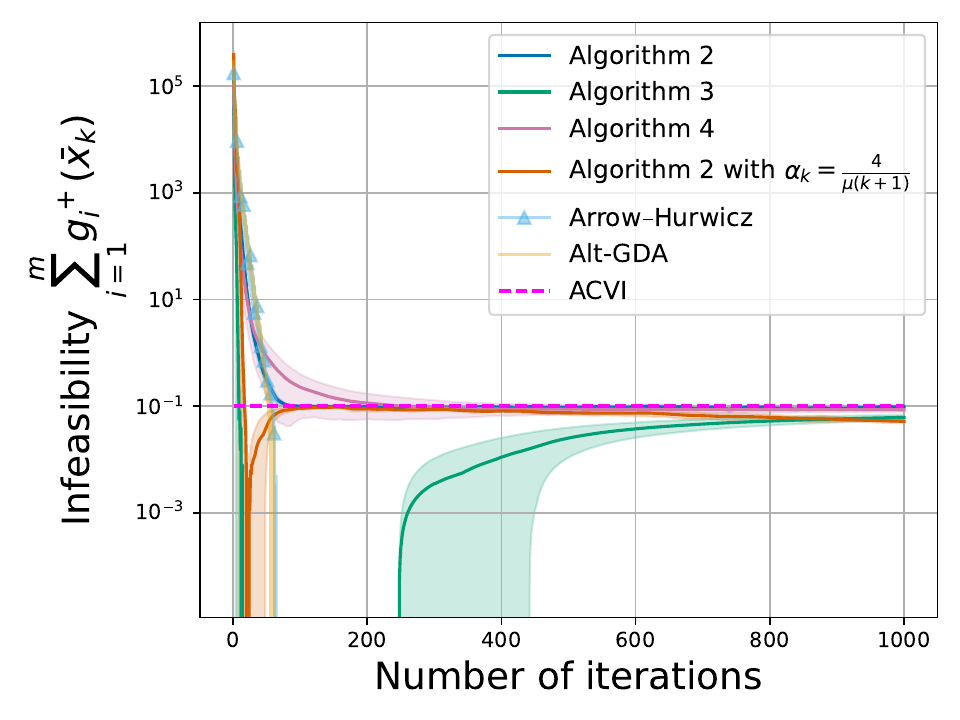}
		\caption{$f$ strongly convex, $f^*$ unknown}
		%\label{st_cvx_unknown_infeas}
	\end{subfigure}
    \begin{subfigure}{0.32\textwidth}
		\includegraphics[width=\linewidth, height = 0.75\linewidth]
		{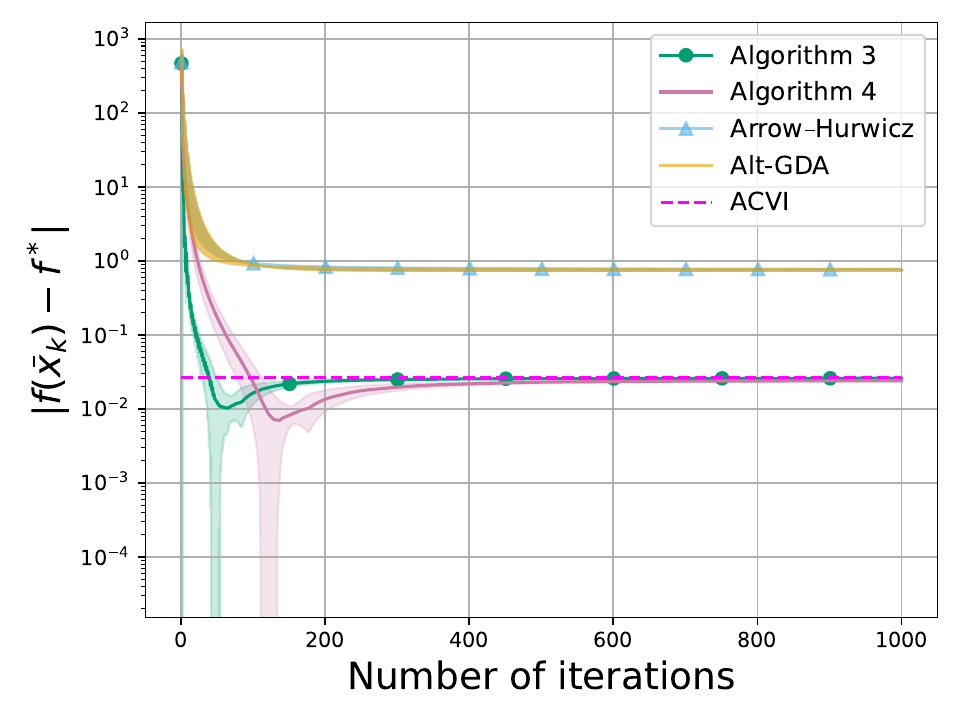}
		\caption{$f$ convex, $f^*$ unknown}
		%\label{cvx_unknown_func}
	\end{subfigure}
	\begin{subfigure}{0.32\textwidth}
		\includegraphics[width=\linewidth,height = 0.75\linewidth]
		{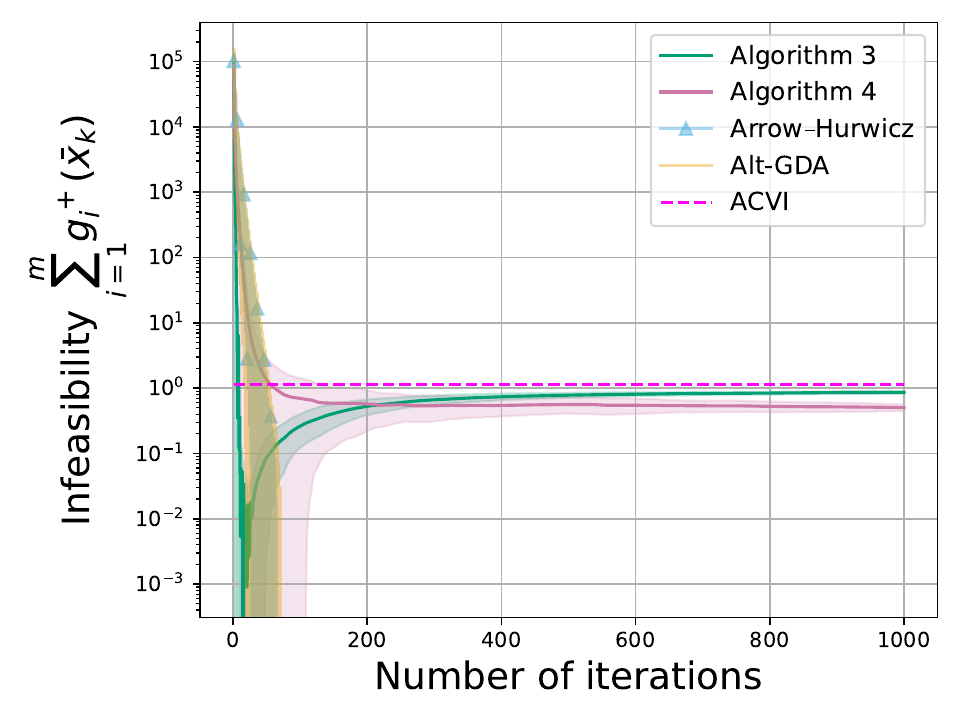}
		\caption{$f$ convex, $f^*$ unknown}
		%\label{cvx_unknown_infeas}
	\end{subfigure}
\caption{Simulation plots for the QCQP problem for the three cases considered. We show the decay in the function values as well as the infeasibility for all the algorithms. In the cases where the optimal function value $f^*$ is unknown, we use the value returned by the CVXPY solver as $f^*$. We choose $N_k = \lceil k^{1/2} \rceil$.}
\label{simulation_qcqp_full}%\vspace{-5mm} 
\end{figure*}
%%%%%%%%%%%%%%%%%%%%%%%%%%%%%%%%%%%%%%%%%%%%%%%%%%%%%%%%%%%%%%%%%%%%%%%%%%%%%%%%%%%%%%%%%%%%%%%%%%%%%%%%%%%%%%%%%%%%%%%%%%%%%%%%%%%%%%%%%%%%%%%%%%%%%%%%%%%%%%%%%%%%%%%%%%%

\begin{assumption}\label{asum_sublevel}
     The sub-level set $\{y \in Y \mid f(y) \leq f^*\}$ is bounded, i.e., there exists a positive constant $B$ such that $\|x\| \leq B$ for all $x$ in the sublevel set $\{y \in Y \mid f(y) \leq f^*\}$.
\end{assumption}
Assumption~\ref{asum_sublevel} holds, for example, when $f$ is coercive; there are also other examples satisfying this assumption but are not discussed here. Using this assumption, it can be proved that if the initial estimate satisfies $r \leq 4 \|x_0 - x^*\|$, where $x^*$ is a solution of problem~\eqref{prob_eq}, then the subgradients of the objective and constraint functions are surely bounded, and so are the sequences $\{\bar r_k\}$, $\{x_k\}$, $\{v_k\}$, and $\{z_k^i\}$ for $i = 1, \ldots, N_k$ and for all $k \geq 1$, i.e., 
for $i = 1, \ldots, N_k$ and for all $k\ge1$, %\colr{Isn't this part repetitive??}
\begin{align*}
    &\|s_f(x_k)\| \leq M_f, \; \|d_k^i\| \leq M_g , \; \bar r_k \leq \widehat B, \; \|x_k-x^*\| \leq \widetilde B,\\
    &\|z_{k+1}^i - x^*\| \leq \|v_{k+1} - x^*\| \leq \bar D(p_0) ,
\end{align*}
where $M_f, M_g,$ $\widehat B$, $\widetilde B$, and $\bar D(p_0)$ are some positive constants. This is shown in Lemma~\ref{lem_bounded_itr_tdows} of Appendix~\ref{lem_bounded_itr_tdows_proof}, where the constants $\widehat B$, $\widetilde B$, and $\bar D(p_0)$ are formally derived.

%Here, $M_f, M_g > 0$ are some constants and the constants $\widehat B$, $\widetilde B$, and $\bar D(p_0)$ are formally defined in Lemma~\ref{lem_bounded_itr_tdows}, provided in Appendix~\ref{}. \colr{Provide the reference.}

Next, we present the convergence rate of the method.

\begin{theorem}\label{thm_tdows}
    Let Assumptions~\ref{asum_set1}, \ref{asum_err_bd}, \ref{asum_convex}, and \ref{asum_sublevel} hold. Define $\tau = \argmin_{1 \leq k \leq T} \frac{\bar r_{k+1}^2}{\sum_{i=1}^k \bar r_i^2}$ and the averaged iterate $\bar x_\tau = \frac{\sum_{k=1}^\tau \bar r_k^2 x_k} {\sum_{i=1}^\tau \bar r_i^2}$. Then, for any $T \geq 1$,
    \begin{align*}
        \EXP{|f(\bar x_\tau) - f^*|} \leq \max \{\bar A_1(T), \min \{\bar A_2(T), \bar A_3(T)\}\} ,
    \end{align*}
    where $\bar A_1(T) = \frac{M_f \widehat D(p_0,T)}{\sqrt{T}} \left( \frac{\widehat B}{r} \right)^{\frac{2}{T}} \ln \left( \frac{e \widehat B^2}{r^2} \right)$ with $\widehat D(p_0,T) = \begin{cases}
         4 \ln \left[ \frac{e \widehat B^2 M_f^2 T}{r^2 \|s_f(x_0)\|^2} \right] \widetilde B + \frac{\widehat B}{2}  &\text{if $p_0 = 0$,} \\
         2 \sqrt{2} \ln \left[ \frac{e \widehat B^2 M_f^2 (T+1)}{p_0} \right] \widetilde B + \frac{\widehat B}{2\sqrt{2}}  &\text{if $p_0 > 0$,}
     \end{cases}$ $\bar A_2(T) = M_f \bar D(p_0) \max_{1 \leq k \leq T} \EXP{(1-q)^{\frac{N_{k}}{2}}}$, and $\bar A_3(T) = \frac{M_f \bar D(p_0)}{T} \left( \frac{\widehat B}{r} \right)^{\frac{2}{T}} \ln \left( \frac{e \widehat B^2}{r^2} \right) \sum_{k=1}^T \EXP{(1-q)^{\frac{N_{k}}{2}}}$. The constants $\widehat B$, $\widetilde B$, and $\bar D(p_0)$ are defined in Lemma~\ref{lem_bounded_itr_tdows}.
\end{theorem}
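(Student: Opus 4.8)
This theorem is the analogue of Theorem~\ref{thm_adaptive_step}: the boundedness of $Y$ (Assumption~\ref{asum_set2}) is dropped and recovered instead from Lemma~\ref{lem_bounded_itr_tdows}, while the DoWS stepsize is replaced by its tamed version. I would therefore follow the architecture of the proof in Appendix~\ref{thm_adaptive_step_proof}, carrying along the extra logarithmic factors produced by taming. First, invoke Lemma~\ref{lem_bounded_itr_tdows} (under its standing requirement $r\le 4\|x_0-x^*\|$ together with Assumptions~\ref{asum_convex} and~\ref{asum_sublevel}) to fix the deterministic constants $\widehat B,\widetilde B,\bar D(p_0)$ with $\bar r_k\le\widehat B$, $\|x_k-x^*\|\le\widetilde B$, $\|v_{k+1}-x^*\|\le\bar D(p_0)$, $\|s_f(x_k)\|\le M_f$, $\|d_k^i\|\le M_g$ surely for all $k\ge1$ and $1\le i\le N_k$; the subgradient bound makes Lemma~\ref{lem_rand_feas}(b) applicable. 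The regimes $p_0=0$ and $p_0>0$ are handled separately, since the stepsize and hence $\widehat D(p_0,T)$ differ; I describe $p_0=0$, where $\alpha_k=\bar r_k^2/\bigl(2\sqrt{p_k}\ln(ep_k/p_1)\bigr)$ and $p_1=r^2\|s_f(x_0)\|^2$ (using $\bar r_1=r$, $x_1=x_0$), the other case being analogous.

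\emph{Optimization term $\bar A_1(T)$.} Testing the projected-subgradient step against $x^*\in X\cap Y\subseteq Y$ gives, surely, $\|v_{k+1}-x^*\|^2\le\|x_k-x^*\|^2-2\alpha_k(f(x_k)-f^*)+\alpha_k^2\|s_f(x_k)\|^2$, which combined with Lemma~\ref{lem_rand_feas}(a) ($\|x_{k+1}-x^*\|^2\le\|v_{k+1}-x^*\|^2$ surely) yields the same inequality with $x_{k+1}$ in place of $v_{k+1}$. Multiplying by the nondecreasing factor $\bar r_k^2/(2\alpha_k)=\sqrt{p_k}\ln(ep_k/p_1)$ and summing, the telescoping part is bounded by Abel summation using $\|x_k-x^*\|^2\le\widetilde B^2$, and the noise part $\sum_k\frac{\bar r_k^2(p_k-p_{k-1})}{4\sqrt{p_k}\ln(ep_k/p_1)}\le\frac{\widehat B^2}{4}\sum_k\frac{p_k-p_{k-1}}{\sqrt{p_k}\ln(ep_k/p_1)}$ is bounded via Lemma~\ref{lem_sum_seq}. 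Dividing by $\sum_{i=1}^\tau\bar r_i^2$, applying Jensen's inequality $f(\bar x_\tau)-f^*\le\frac{\sum_{k=1}^\tau\bar r_k^2(f(x_k)-f^*)}{\sum_{i=1}^\tau\bar r_i^2}$, using $\sqrt{p_\tau}\le M_f\sqrt{\sum_{i=1}^\tau\bar r_i^2}$ and $p_\tau\le M_f^2 T\widehat B^2$, and finally the choice $\tau=\argmin_{1\le k\le T}\frac{\bar r_{k+1}^2}{\sum_{i=1}^k\bar r_i^2}$ together with $\prod_{k=1}^T\bigl(1+\frac{\bar r_{k+1}^2}{\sum_{i=1}^k\bar r_i^2}\bigr)=\frac{\sum_{i=1}^{T+1}\bar r_i^2}{r^2}\le\frac{(T+1)\widehat B^2}{r^2}$ (whence $\frac{\bar r_\tau^2}{\sum_{i=1}^\tau\bar r_i^2}\le\frac{\bar r_{\tau+1}^2}{\sum_{i=1}^\tau\bar r_i^2}\le(\widehat B/r)^{2/T}\frac1T\ln(e\widehat B^2/r^2)$), the estimate collapses to $\bar A_1(T)$ with the stated $\widehat D(p_0,T)$. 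Since every step here is pathwise, in fact $f(\bar x_\tau)-f^*\le\bar A_1(T)$ surely.

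\emph{Feasibility term $\min\{\bar A_2(\tau),\bar A_3(T)\}$.} Because $f$ is $M_f$-Lipschitz on the bounded region containing all iterates and $x^*\in X\cap Y$, for $\bar x_\tau\in Y$ one has $f^*-f(\bar x_\tau)\le M_f\dist(\bar x_\tau,X\cap Y)$; convexity of $\dist(\cdot,X\cap Y)$ with weights $\lambda_k=\bar r_k^2/\sum_{i=1}^\tau\bar r_i^2$ gives $f^*-f(\bar x_\tau)\le M_f\sum_{k=1}^\tau\lambda_k\dist(x_k,X\cap Y)$. Lemma~\ref{lem_rand_feas}(b) yields $\EXP{\dist(x_k,X\cap Y)\mid\widetilde\cF_{k-1}}\le(1-q)^{N_k/2}\|v_k-x^*\|\le\bar D(p_0)(1-q)^{N_k/2}$. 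Bounding $\sum_k\lambda_k\dist(x_k,\cdot)$ by $\max_{1\le k\le\tau}\dist(x_k,\cdot)$ and taking expectations gives $\bar A_2(\tau)$; distributing the weights and re-using the $\tau$-argmin bound on $\bar r_k^2/\sum_{i=1}^\tau\bar r_i^2$ gives the $\frac1T(\widehat B/r)^{2/T}\ln(e\widehat B^2/r^2)\sum_{k=1}^\tau\EXP{(1-q)^{N_k/2}}$ shape of $\bar A_3(T)$; we keep the smaller. Combining with the surely-valid bound of the previous step through $|f(\bar x_\tau)-f^*|\le\max\{(f(\bar x_\tau)-f^*)^+,(f^*-f(\bar x_\tau))^+\}$ and taking expectations gives the claim.

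\emph{Main obstacle.} The delicate part is the constant bookkeeping of the tamed-stepsize estimate: Abel-summing the telescoping term against the multiplier $\sqrt{p_k}\ln(ep_k/p_1)$, invoking Lemma~\ref{lem_sum_seq} in exactly the form that keeps the noise term at order $\widehat B^2\sqrt{p_\tau}/\ln(ep_\tau/p_1)$, and then seeing how the residual $\ln(ep_\tau/p_1)$ (bounded via $p_\tau\le M_f^2 T\widehat B^2$ and $p_1\ge r^2\|s_f(x_0)\|^2$) combines with the $\tau$-argmin factor and $\sqrt{p_\tau}\le M_f\sqrt{\sum_{i=1}^\tau\bar r_i^2}$ to reproduce precisely $\widehat D(p_0,T)$ in both the $p_0=0$ and $p_0>0$ cases. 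A secondary difficulty is handling the random weights $\bar r_k^2$ and the random index $\tau$ inside the expectation when passing from the conditional geometric decay of Lemma~\ref{lem_rand_feas}(b) to the deterministic envelopes $\bar A_2(\tau)$ and $\bar A_3(T)$.
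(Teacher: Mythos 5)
Your overall plan is exactly the paper's route: invoke Lemma~\ref{lem_bounded_itr_tdows} for $\widehat B$, $\widetilde B$, $\bar D(p_0)$ and the subgradient bounds, re-run the Theorem~\ref{thm_adaptive_step} argument with the tamed stepsize (splitting the cases $p_0=0$ and $p_0>0$), and handle the infeasibility lower bound exactly as before with $D$ replaced by $\bar D(p_0)$, giving $\bar A_2(\tau)$ and $\bar A_3(T)$. Your Abel-summation treatment of the telescoping term (bounding by $\widetilde B^2$ instead of the paper's $2\bar d_{T+1}\bar r_{T+1}$) is fine and even slightly cleaner, since $\widetilde B\le\tfrac54\widehat B$ under $r\le 4\|x_0-x^*\|$, so it still implies the stated $\widehat D(p_0,T)$.

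However, two of your justifications, as written, would not produce the stated constants. First, Lemma~\ref{lem_sum_seq} is not the tool for the weighted-subgradient (noise) sum: it bounds $\min_{1\le k\le T} s_{k+1}/\sum_{i\le k}s_i$ and says nothing about $\sum_k (p_k-p_{k-1})/\bigl(\sqrt{p_k}\,\ln(ep_k/p_1)\bigr)$; moreover the target you state, a noise term of order $\widehat B^2\sqrt{p_\tau}/\ln(ep_\tau/p_1)$, is neither what such a bound gives nor what the theorem needs. The paper simply lower-bounds $\ln(ep_k/p_1)\ge 1$ and telescopes via $\tfrac{p_k-p_{k-1}}{\sqrt{p_k}}\le 2(\sqrt{p_k}-\sqrt{p_{k-1}})$, yielding $\bar r_{T+1}^2\sqrt{p_T}/2$, which is where the $\widehat B/2$ (resp. $\widehat B/(2\sqrt 2)$) term in $\widehat D(p_0,T)$ comes from. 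Second, your derivation of the key $1/T$ factor from the product identity $\prod_{k=1}^T\bigl(1+\bar r_{k+1}^2/\sum_{i\le k}\bar r_i^2\bigr)=\sum_{i\le T+1}\bar r_i^2/r^2$ together with the crude bound $\sum_{i\le T+1}\bar r_i^2\le(T+1)\widehat B^2$ only gives $\min_k \bar r_{k+1}^2/\sum_{i\le k}\bar r_i^2 \le \tfrac1T\bigl((T+1)\widehat B^2/r^2\bigr)^{1/T}\ln\bigl((T+1)\widehat B^2/r^2\bigr)$, i.e.\ an extra $\ln(T+1)$ inside the logarithm (an $O(\ln T/T)$ factor), which does not match $\bigl(\widehat B/r\bigr)^{2/T}\tfrac1T\ln\bigl(e\widehat B^2/r^2\bigr)$ as claimed in $\bar A_1(T)$ and $\bar A_3(T)$. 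To get the stated constants you must apply Lemma~\ref{lem_sum_seq} at this point (with $s_k=\bar r_k^2$ and $s_0=r^2$), as the paper does — so this is a case of the right tools being swapped into the wrong places rather than a missing idea, but the constant bookkeeping you flag as the "main obstacle" does not go through along the path you describe.
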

The analysis follows similar steps to Theorem~\ref{thm_adaptive_step}, with slight changes in the constants, and is provided in Appendix~\ref{thm_tdows_proof}. Compared to Theorem~\ref{thm_adaptive_step}, no boundedness assumption on the constraint set is required, but the convergence rate incur an additional $\ln(T)$ factor (inside $\widehat D(p_0,T)$). The same sample-size selection arguments for the randomized feasibility algorithm (Algorithm~\ref{algo_rand_feas}) discussed after Theorem~\ref{thm_adaptive_step} also apply here.

%-----------------------------------------------------------

%\colr{One reviewer commented about safety constraints. Think if we need to include it here.}

%%%%%%%%%%%%%%%%%%%%%%%%%%%%%%%%%%%%%%%%%%%%%%%%%%%%%%%%%%%%%%%%%%%%%%%%%%%%%%%%%%%%%%%%%%%%%%%%%%%%%%%%%%%%%%%%%%%%%%%%%%%%%%%%%%%%%%%%%%%%%%%%%%%%%%%%%%%%%%%%%%%%%%%%%%%%%%%%%%%%%%%%%%%%%%%%%%%%%%%%%%%%%%%%%%%%%%%%%%%%%%%%%%%%%%%%%%

\section{Simulation}\label{sec:simulation}

This section presents the QCQP, SVM, and Logistic Regression with Group Fairness Constraints, all run on a MacBook Pro 2021 (Apple M1 Pro chip, 16 GB RAM). An extended version of Section~\ref{sec:simulation}, including the problem setup, parameter selection, runtime analysis, dataset details, and discussion of plots, is provided in Appendix~\ref{app_simulation}.

%In this section, we present the simulation results on a Quadratically Constrained Quadratic Programming (QCQP) problem and the soft margin Support Vector Machine (SVM) classification problem. All the experiments are run on a MacBook Pro 2021 (Apple M1 pro chip, 16GB RAM). \colr{Remind here that Algorithm~\ref{algo_tdows_feas} is T-DoWS with Randomized Feasibility and the full algorithm is stated in Appendix~\ref{app_algo_tdows}.}

%%%%%%%%%%%%%%%%%%%%%%%%%%%%%%%%%%%%%%%%%%%%%%%%%%%%%%%%%%%%%%%%%%%%%%%%%%%%%%%%%%%%%%%%%%%%%%%%%%%%%%%%%%%%%%%%%%%%%%%%%%%%%%%%%%%%%%%%%%%%%%%%%%%%%%%%%%%%%%%%%%%%%%%%%%%%%%%%%%%%%%%%%%%%%%%%%%%%%%%%%%%%
\begin{figure*}[t]\centering
    \begin{subfigure}{0.32\textwidth}
		\includegraphics[width=\linewidth, height = 0.75\linewidth]
		{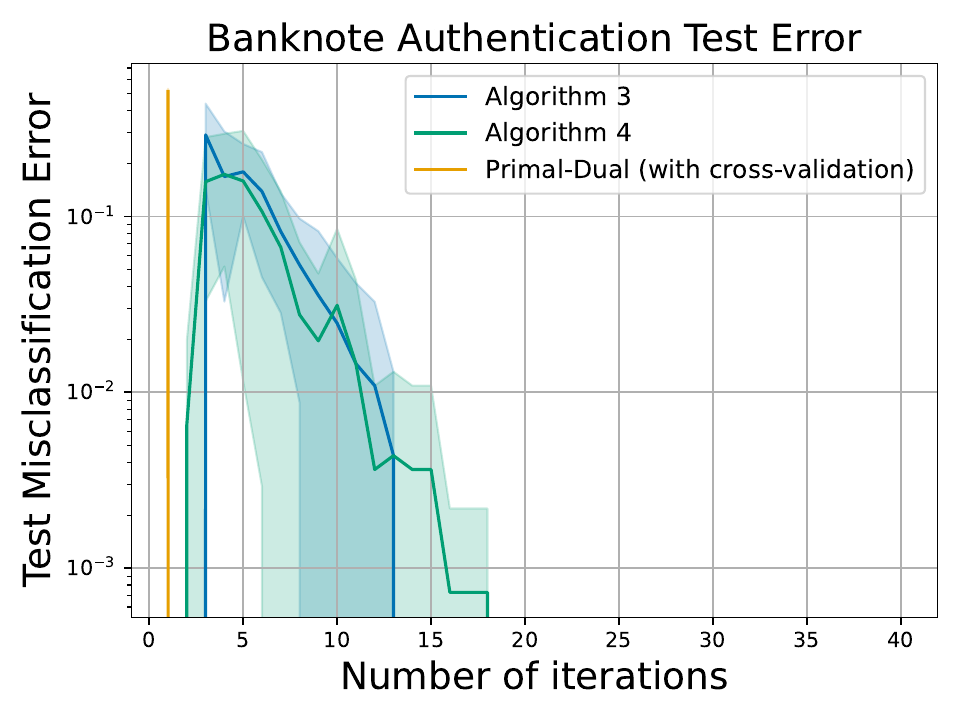}
	\end{subfigure}
    \begin{subfigure}{0.32\textwidth}
		\includegraphics[width=\linewidth, height = 0.75\linewidth]
		{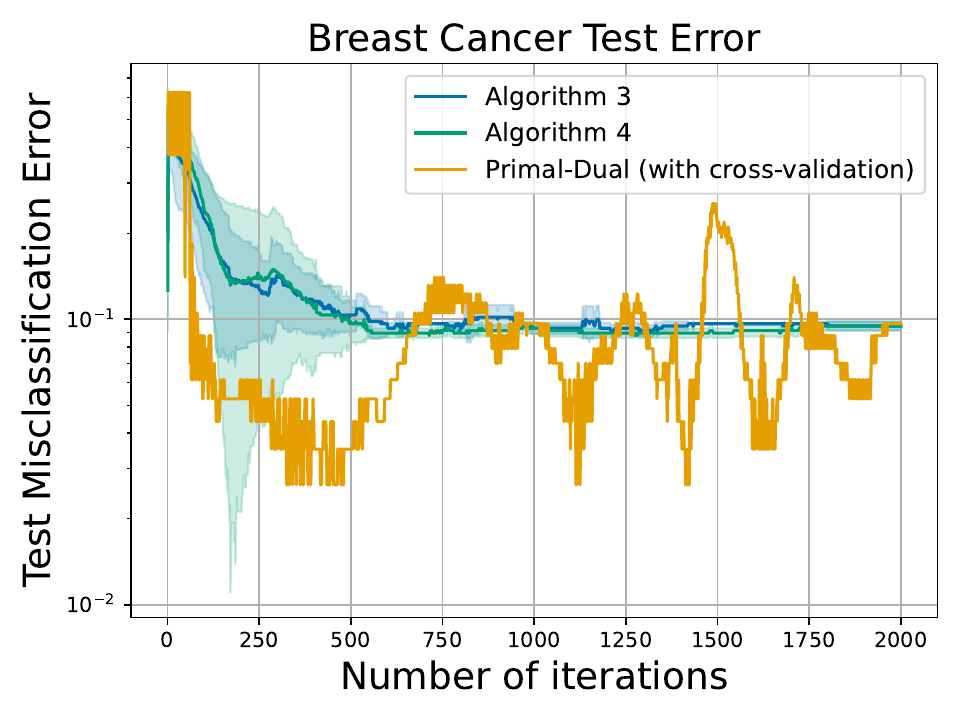}
	\end{subfigure}
    \begin{subfigure}{0.32\textwidth}
		\includegraphics[width=\linewidth, height = 0.75\linewidth]
		{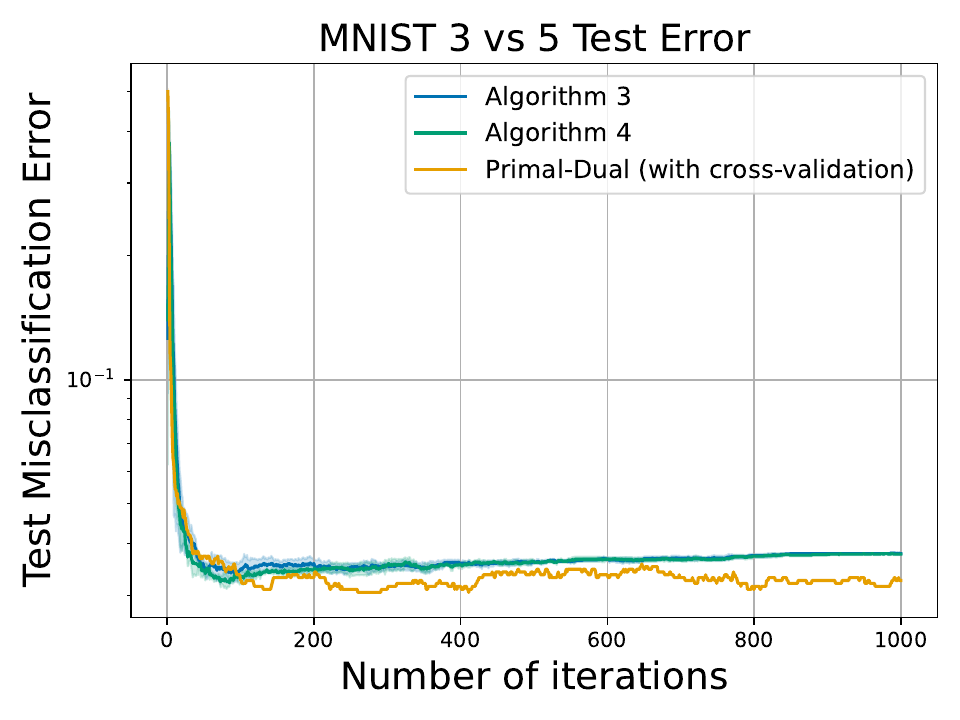}
	\end{subfigure}
    \caption{Test misclassification error for Algorithms~\ref{algo_dows_feas} and \ref{algo_tdows_feas}, and the primal-dual method with step sizes chosen via $3$-fold cross-validation.}
\label{simulation_svm_test_error}%\vspace{-5mm} 
\end{figure*}
%%%%%%%%%%%%%%%%%%%%%%%%%%%%%%%%%%%%%%%%%%%%%%%%%%%%%%%%%%%%%%%%%%%%%%%%%%%%%%%%%%%%%%%%%%%%%%%%%%%%%%%%%%%%%%%%%%%%%%%%%%%%%%%%%%%%%%%%%%%%%%%%%%%%%%%%%%%%%%%%%%%%%%%%%%%%%%%%%%%%%%%%%%%%%%%%%%%%%%%%%%%%

\subsection{Quadratically Constrained Quadratic Programming (QCQP)}
We consider the following optimization problem
\begin{align*}
    &\min_{x \in [-10,10]^{10}} \la x, A x \ra + \la b, x \ra \\
    & \text{s.t. } \la x , C_i x \ra + \la u_i , x \ra - e_i  \leq 0 \quad \text{for } i = 1, \ldots, m. 
\end{align*}
We consider three problem setups: (i) strongly convex objective function with known $f^*$, (ii) strongly convex objective function with unknown $f^*$, and (iii) convex objective function with unknown $f^*$. We compare Algorithms~\ref{algo_grad_descent}, \ref{algo_dows_feas}, and \ref{algo_tdows_feas} with the method in~\cite{nedic2019random} (i.e., Algorithm~\ref{algo_grad_descent} with $\alpha_k = {4}/{\mu(k+1)}$), the Arrow-Hurwicz method~\cite{he2022convergence}, Alt-GDA~\cite{zhang2022near}, ADMM with log-barrier penalty functions known as ACVI \cite{yang2022solving}, and CVXPY~\cite{diamond2016cvxpy, agrawal2018rewriting}. We observe that CVXPY and IPOPT~\cite{wachter2006implementation} fails for high-dimensional problems; CVXPY also fails when the number of constraints is large. Hence, we set number of constraints $m = 1000$. For cases (ii) and (iii), where $f^*$ is unknown, the CVXPY solution is used as a reference for $f^*$. Across all cases, ACVI incurs the highest computational cost and requires careful hyperparameter tuning, as it may otherwise diverge; when properly tuned, however, it converges the fastest. The decay in the objective function and infeasibility for all the algorithms is shown in Figure~\ref{simulation_qcqp_full}. Algorithm~\ref{algo_grad_descent} achieves the best objective convergence in case (i), showing a linear trend, while Algorithms~\ref{algo_dows_feas} and \ref{algo_tdows_feas} perform well in cases (ii) and (iii), being fully problem parameter-free and adaptive. ACVI attains the fastest infeasibility reduction, while our methods remain competitive and improve with larger $N_k$; Arrow-Hurwicz and Alt-GDA perform slightly worse with mild oscillations. The effect of increasing $N_k$ is illustrated for Algorithm 3 in the convex case in Figure~\ref{dows_infeas} (cf. Appendix~\ref{app_qcqp}).

%%%%%%%%%%%%%%%%%%%%%%%%%%%%%%%%%%%%%%%%%%%%%%%%%%%%%%%%%%%%%%%%%%%%%%%%%%%%%%%%%%%%%%%%%%%%%%%%%%%%%%%%%%%%

%%%%%%%%%%%%%%%%%%%%%%%%%%%%%%%%%%%%%%%%%%%%%%%%%%%%%%%%%%%%%%%%%%%%%%%%%%%%%%%%%%%%%%%%%%%%%%%%%%%%%%%%%%%%%%%%%%%%%%%%%%%%%%%%%%%%%%%%%%%%%%%%%%%%%%%%%%%%%%%%%%%%%%%%%%%%%%%%%%%%%%%%%%%%%

\begin{figure*}[t]\centering
	\begin{subfigure}{0.4\textwidth}
		\includegraphics[width=\linewidth, height = 0.75\linewidth]
		{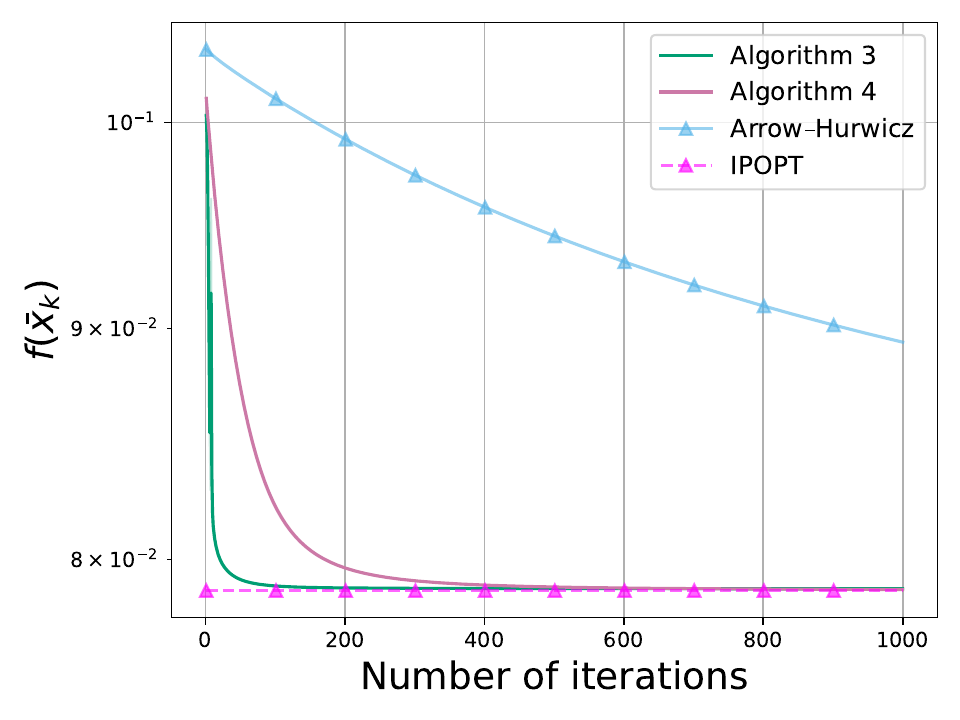}
		\caption{Objective function value}
		\label{func_val}
	\end{subfigure}
	\begin{subfigure}{0.4\textwidth}
		\includegraphics[width=\linewidth,height = 0.75\linewidth]
		{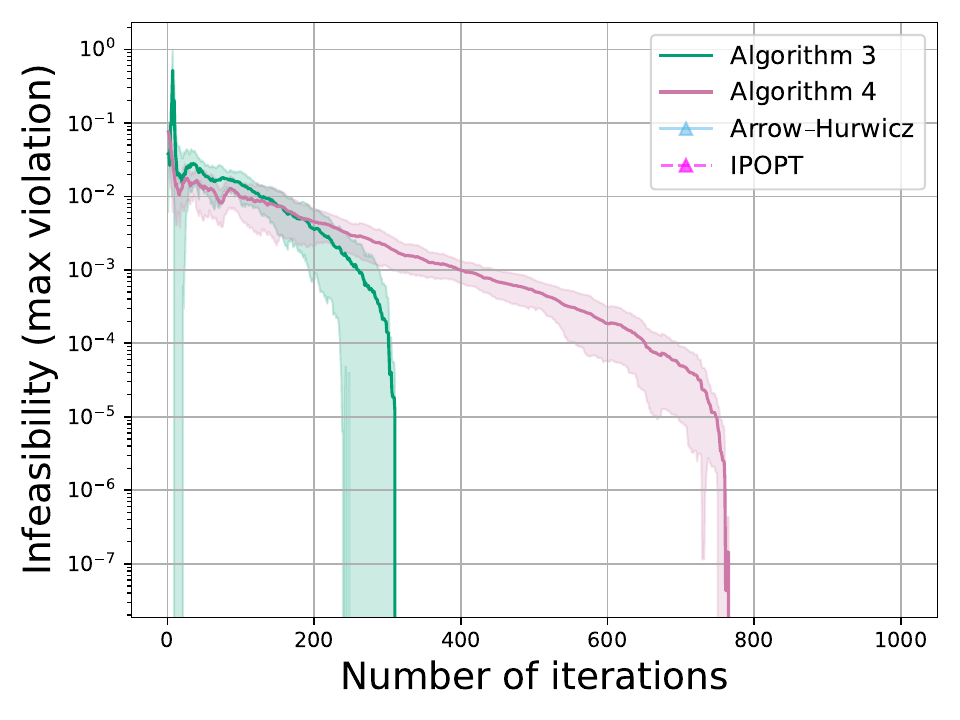}
		\caption{Maximum Infeasibility violation}
		\label{infeas}
	\end{subfigure}
\caption{Simulation plots for the constrained logistic regression with fairness constraints with $N_k = 100 + \lceil k^{1/1.1} \rceil$.}
\label{simulation_fairness}%\vspace{-5mm} 
\end{figure*}
%%%%%%%%%%%%%%%%%%%%%%%%%%%%%%%%%%%%%%%%%%%%%%%%%%%%%%%%%%%%%%%%%%%%%%%%%%%%%%%%%%%%%%%%%%%%%%%%%%%%%%%%%%%%%%%%%%%%%%%%%%%%%%%%%%%%%%%%%%%%%%%%%%%%%%%%%%%%%%%%%%%%%%%%%%%

\subsection{Support Vector Machine (SVM) Classification}

We consider a binary classification problem with a supervised dataset $\{z_i, y_i\}_{i=1}^m$, where $z_i$ denotes the feature vectors, $y_i \in \{-1, +1\}$ are the corresponding class labels, and $m$ is the number of training data points. We consider the soft-margin SVM classification problem
\begin{align*}
    \min_{w, b, \xi} \quad & \frac{1}{2}\|w\|^2 + C \sum_{i=1}^m \xi_i \\
    \text{subject to} \quad 
    & g_i(w,b,\xi_i) \leq 0 \;, \quad \xi_i \ge 0,\quad i = 1, \ldots, m ,
\end{align*}
where $g_i(w,b,\xi_i) = 1 - \xi_i - y_i (w^\top z_i + b)$ and $C>0$ is the hyperparameter controlling the tradeoff between margin size and misclassification penalties. Here, we choose $C = 10^{-6}$. We consider 3 different datasets: (i) Banknote Authentication~\cite{banknote_authentication_267}, (ii) Breast Cancer Wisconsin~\cite{breast_cancer_wisconsin_diagnostic_1993}, and (iii) MNIST 3 vs 5 digit classification~\cite{lecun2010mnist}. Since, the objective function is convex, we implement Algorithms~\ref{algo_dows_feas} and \ref{algo_tdows_feas} and compare it with Lagrangian primal-dual updates. The step-size rules in Arrow–Hurwicz~\cite{he2022convergence} and Alt-GDA~\cite{zhang2022near} fail to converge, whereas our methods use adaptive step sizes and converge without requiring problem parameters. Hence to obtain good primal and dual step sizes for the primal-dual method, we performed 3-fold cross validation on the train set. This cross-validation step adds computational overhead. The dimension of dual variable vector equals the number of training samples, making primal-dual methods more suitable for offline settings; accordingly, the full dataset is processed at each iteration. In contrast, our Algorithms~\ref{algo_dows_feas} and \ref{algo_tdows_feas} enforce constraints using randomly sampled data points and therefore do not require processing of the full dataset at each iteration. Hence, our algorithms are more suitable for online learning. Figure~\ref{simulation_svm_test_error} reports the test misclassification error. The Banknote Authentication dataset exhibits better class separability than other datasets. All the algorithms achieve comparable performance. Additional plots and detailed discussion of objective and feasibility behavior are provided in Appendix~\ref{app_simulation}.

%%%%%%%%%%%%%%%%%%%%%%%%%%%%%%%%%%%%%%%%%%%%%%%%%%%%%%%%%%%%%%%%%%%%%%%%%%%%%%%%%%%%%%%%%%%%%%%%%%%%%%%%%%%%

\subsection{Constrained Logistic Regression with Group Fairness Constraints}

Let us consider the optimization problem
\begin{align*}
&\min_{x \in \mathbb{R}^n} \quad f(x) := \frac{1}{\widehat N} \sum_{j=1}^{\widehat N} \ln\!\left(1 + \exp(-y_j \langle a_j, x \rangle)\right) \\
&\text{s.t.} \quad x \in Y \cap X, \text{ where } Y := \{ x \in \mathbb{R}^n \mid \ell \leq x \leq u \}, \\
&\text{and } X := \bigcap_{i=1}^m \left\{ x \in \mathbb{R}^n \;\middle|\; \big| c_i^\top x \big| \leq \tau_i \right\},
\end{align*}
where $\{(a_j,y_j)\}_{j=1}^{\widehat N}$ denote the dataset with the feature vectors $a_j \in \mathbb{R}^n$ and the labels $y_j \in \{-1,+1\}$. The set $Y$ encodes simple box constraints and the set $X$ encodes group fairness constraints. To construct the fairness constraints, we partition the dataset into $G$ demographic groups indexed by $\mathcal{G}_1,\ldots,\mathcal{G}_G$. For each group, we define the empirical mean feature vector
\begin{align*}
    \mu_r := \frac{1}{|\mathcal{G}_r|} \sum_{j \in \mathcal{G}_r} a_j \quad \text{for all } r= 1,\ldots,G.
\end{align*}
We then generate $m$ fairness constraints by randomly selecting pairs of groups $(r_1,r_2)$ and defining $c_i := \mu_{r_1} - \mu_{r_2}$. This leads to constraints enforcing similarity of model responses across groups:
\begin{align*}
    \big| (\mu_{r_1} - \mu_{r_2})^\top x \big| \leq \tau_i .
\end{align*}
The constraint thresholds are chosen as 
\begin{align*}
    \tau_i := \left| c_i^\top \widehat x \right| + \eta ,
\end{align*}
where $\eta > 0$ is a small slack parameter ensuring feasibility of $\widehat x$. Each fairness constraint can equivalently be written as two convex inequalities:
\begin{align*}
c_i^\top x - \tau_i \leq 0,
\qquad
-c_i^\top x - \tau_i \leq 0.
\end{align*}
For the synthetic data generation, we sample feature vectors $a_j \sim \mathcal{N}(0, I_n)$ and generate labels according to a logistic model
\begin{align*}
\mathbb{P}(y_j = 1 \mid a_j) = \sigma(a_j^\top \widehat x),
\end{align*}
where $\sigma(t) = (1 + e^{-t})^{-1}$ and $\widehat x \in \mathbb{R}^n$ is a ground-truth vector sampled from a Gaussian distribution and projected onto the box $[\ell,u]$. The parameter selection of the problem and the CPU runtime is provided in Appendix~\ref{app_fairness_simulation}.

We ran our Algorithms~\ref{algo_dows_feas} and~\ref{algo_tdows_feas}, and compared them with the Arrow-Hurwicz method. To obtain a baseline estimate of the optimal function value, we additionally employed the state-of-the-art solver IPOPT.

We report the function values and the maximum infeasibility violation across iterations for all the algorithms in Figure~\ref{simulation_fairness}. 
From Figure~\ref{simulation_fairness}, it is apparent that Algorithms~\ref{algo_dows_feas} and~\ref{algo_tdows_feas} converge to the baseline optimal function value obtained by IPOPT. In contrast, the convergence of the Arrow-Hurwicz method is relatively slow. Moreover, IPOPT and Arrow-Hurwicz exhibit the best feasibility performance, while our algorithms also achieve zero feasibility violation after a sufficient number of iterations. Note that we choose $N_k = 100 + \lceil k^{1/1.1} \rceil$. Increasing $N_k$ further accelerates the reduction in infeasibility. More generally, the growth schedule of $N_k$ in the randomized feasibility updates provides a trade-off between feasibility accuracy and computational cost (time and memory). Empirically, larger values of $N_k$ lead to faster reductions in the infeasibility gap.

%%%%%%%%%%%%%%%%%%%%%%%%%%%%%%%%%%%%%%%%%%%%%%%%%%%%%%%%%%%%%%%%%%%%%%%%%%%%%%%%%%%%%%%%%%%%%%%%%%%%%%%%%%%%

%%%%%%%%%%%%%%%%%%%%%%%%%%%%%%%%%%%%%%%%%%%%%%%%%%%%%%%%%%%%%%%%%%%%%%%%%%%%%%%%%%%%%%%%%%%%%%%%%%%%%%%%%%%%%%%%%%%%%%%%%%%%%%%%%%%%%%%%%%%%%%%%%%%%%%%%%%%%%%%%%%%%%%%%%%%%%%%%%%%%%%%%%%%%%%%%%%%%%%%%%%%%

\section{Conclusion}\label{sec:conclusion} 

In this work, we study a constrained optimization problem involving the minimization of a convex objective function subject to the intersection of many sublevel sets of convex functions. We employ a randomized feasibility algorithm to bypass exact projection steps and combine it with gradient-based updates to minimize the objective function. We establish adaptive stepsize schemes and obtain linear convergence up to a threshold for strongly convex functions. Moreover, we obtain an \(O(1/\sqrt{T})\) convergence rate for convex, possibly nonsmooth objectives using fully line-search-free and parameter-free step sizes. We further study random sampling strategies for the number $N_k$ of constraint samples based on general distributions and provide corresponding theoretical guarantees. Finally, we demonstrate the effectiveness of the proposed algorithms through simulations on QCQP, SVM, and constrained logistic regression problems with group fairness constraints.

%%%%%%%%%%%%%%%%%%%%%%%%%%%%%%%%%%%%%%%%%%%%%%%%%%%%%%%%%%%%%%%%%%%%%%%%%%%%%%%%%%%%%%%%%%%%%%%%%%%%%%%%%%%%%%%%%%%%%%%%%%%%%%%%%%%%%%%%%%%%%%%%%%%%%%%%%%%%%%%%%%%%%%%%%%%%%%%%%%%%%%%%%%%%%%%%%%%%%%%%%%%%%%%%%%%%%%%%%%%%%%%%%%%%%%%%%%
%%%%%%%%%%%%%%%%%%%%%%%%%%%%%%%%%%%%%%%%%%%%%%%%%%%%%%%%%%%%%%%%%%%%%%%%%%%%%%%%%%%%%%%%%%%%%%%%%%%%%%%%%%%%%%%%%%%%%%%%%%%%%%%%%%%%%%%%%%%%%%%%%%%%%%%%%%%%%%%%%%%%%%%%%%%%%%%%%%%%%%%%%%%%%%%%%%%%%%%%%%%%%%%%%%%%%%%%%%%%%%%%%%%%%%%%%%

% Acknowledgements should only appear in the accepted version.
\section*{Acknowledgements}

We are grateful to the reviewers for invaluable comments and insights that helped us improve this manuscript. This work is partially supported by the NSF award CIF 2134256.

\section*{Impact Statement}

%Authors are \textbf{required} to include a statement of the potential broader impact of their work, including its ethical aspects and future societal consequences. This statement should be in an unnumbered section at the end of the paper (co-located with Acknowledgements -- the two may appear in either order, but both must be before References), and does not count toward the paper page limit. In many cases, where the ethical impacts and expected societal implications are those that are well established when advancing the field of Machine Learning, substantial discussion is not required, and a simple statement such as the following will suffice:

``This paper presents work whose goal is to advance the field of Machine
Learning. There are many potential societal consequences of our work, none
which we feel must be specifically highlighted here.''

%The above statement can be used verbatim in such cases, but we encourage authors to think about whether there is content which does warrant further discussion, as this statement will be apparent if the paper is later flagged for ethics review.

%%%%%%%%%%%%%%%%%%%%%%%%%%%%%%%%%%%%%%%%%%%%%%%%%%%%%%%%%%%%%%%%%%%%%%%%%%%%%%%
%%%%%%%%%%%%%%%%%%%%%%%%%%%%%%%%%%%%%%%%%%%%%%%%%%%%%%%%%%%%%%%%%%%%%%%%%%%%%%%

% In the unusual situation where you want a paper to appear in the
% references without citing it in the main text, use \nocite
%\nocite{langley00}

\bibliography{reference}
\bibliographystyle{icml2026}

%%%%%%%%%%%%%%%%%%%%%%%%%%%%%%%%%%%%%%%%%%%%%%%%%%%%%%%%%%%%%%%%%%%%%%%%%%%%%%%
%%%%%%%%%%%%%%%%%%%%%%%%%%%%%%%%%%%%%%%%%%%%%%%%%%%%%%%%%%%%%%%%%%%%%%%%%%%%%%%
% APPENDIX
%%%%%%%%%%%%%%%%%%%%%%%%%%%%%%%%%%%%%%%%%%%%%%%%%%%%%%%%%%%%%%%%%%%%%%%%%%%%%%%
%%%%%%%%%%%%%%%%%%%%%%%%%%%%%%%%%%%%%%%%%%%%%%%%%%%%%%%%%%%%%%%%%%%%%%%%%%%%%%%
\newpage
\appendix
\onecolumn

%\section{SOME RESULTS FOR SECTION~\ref{sec:problem}}\label{dual_mul_con_with_c}

\section{Strong Slater Condition: Global Error Bound and Bounds on Lagrangian Dual Multipliers}\label{app_dul_mul_c_connec}

We connect the global error bound $\g$ with the optimal dual multipliers, which is not readily available in the existing literature. However, we do not claim any novel contribution rather than just providing a simple direct connection.
Here, we assuming that the functions $g_i$ are convex over $\rn$,  $Y$ is convex and closed, and $X\cap Y\ne\emptyset$ (Assumption~\ref{asum_set1} holds). Recall that $X = \cap_{i=1}^m \{ x\in\R^n \mid g_i(x) \leq 0 \}$, which is equivalently given by $X= \left\{ x\in\R^n \mid G(x) \leq 0 \right\}$ with $G(x)=\max_{1\le i\le m} g_i(x)$.

Under the Strong Slater condition, requiring that $0 \not\in {\rm cl}(\partial h(h^{-1}(0)))$ where ${\rm cl}(A)$ denotes the closure of a set $A$, the following global error bound is shown in \cite{lewis1998error} [Corollary 2(a,b)] for a proper closed convex function $h$:
\begin{equation} \label{eq-gammabound}
\dist(x, X \cap Y) \leq \gamma h^+(x) \quad \text{ for all $x \in \R^n$},
\end{equation}
with the constant $\gamma$ given  by
\begin{equation} \label{eq-gamma}
\gamma = \frac{1}{\dist(0, \text{cl}(\partial h(h^{-1}(0))) )}. \end{equation}
Note that $\gamma$ is finite since $0 \notin {\rm cl}(\partial h(h^{-1}(0)))$. 
Applying this result to the function $h(x)=G(x)+\delta_Y(x)$ for all $x$  with $G(x)=\max_{1\le i\le m} g_i(x)$,
and noting that $h(x)=+\infty$ for all $x\notin Y$,  
under the Strong Slater condition $0\notin {\rm cl}(\partial h(h^{-1}(0)))$, we have
\[ \dist(x, X \cap Y) \leq \gamma G^+(x) \quad \text{ for all $x \in Y$},\]
with the constant $\gamma$ given in~\eqref{eq-gamma}.

A different approach that leads to a necessary and sufficient condition for the existence of an error bound has been used in \cite{bertsekas1999note} [Proposition 3]. Proposition 3 of \cite{bertsekas1999note} (applied to a single constraint inequality $G(x)=\max_{1\le i\le m} g_i(x)$) states that: 
The optimal value $d(x)$ of the projection problem
\begin{eqnarray}\label{eq-projp}
    \hbox{minimize } && \|x- y\|\cr
\hbox{subject to }  && y\in Y, \ \ G(y) \le 0,
\end{eqnarray}
where $Y$ is a convex subset of $\rn$ and the function $G$ is convex, satisfies the Hoffman-like bound: for some constant $\bar c>0$,
\[d(x)\le \bar c\, G^+(x)\qquad\hbox{for all } x\in Y\]
 if and only if the projection problem~\eqref{eq-projp} has a Lagrange multiplier
$\mu^*(x)$ such that the set $\{\mu^*(x)\mid x\in Y\}$ is bounded by $\bar c$, i.e., 
\begin{equation} \label{eq-c}
\mu^*(x)\le \bar c \qquad\hbox{for all $x\in Y$}.
\end{equation}
In view of the definition of $d(x)$, we have $d(x)=\dist(x, X\cap Y)$, and the preceding result gives a necessary and sufficient condition for the relation
\begin{equation} \label{eq-cbound}
\dist(x, X\cap Y)\le \bar c\, G^+(x)\qquad\hbox{for all } x\in Y.
\end{equation}
to hold.

In order to explicitly relate $\gamma$ in relation~\eqref{eq-gamma} with $\bar c$ in relation~\eqref{eq-c}, consider the projection problem of the following form:
\begin{eqnarray}\label{eq-projp2}
    \hbox{minimize } && \|x- y\|\cr
\hbox{subject to }  && h(y)\le 0,
\end{eqnarray}
where $h(x)=G(x)+\delta_Y(x)$ for all $x\in\mathbb{R}^n$. 
The projection problem~\eqref{eq-projp2} always has a unique solution - the projection $\Pi_{X\cap Y}[x]$ of $x$ on the convex closed set 
$X\cap Y=\{z\in\mathbb{R}^n\mid h(z)\le0\}$, which we denote by $y^*_x$.

Note that $h^+(x)=+\infty$ for $x\notin Y$, while $\dist(x,X\cap Y)=0$ for $x\in X\cap Y$. In these two cases relation~\eqref{eq-gammabound} would hold for any $\gamma>0$. Thus,
it is sufficient to consider the projection problem~\eqref{eq-projp2} for the case when $x\in Y$, $x\notin X\cap Y$. In this case, the projection problem~\eqref{eq-projp2} has a nonzero optimal value, i.e.,
\[\dist(x,X\cap Y)=\|x- y^*_x\|>0\qquad
\hbox{for all }x\in Y,\ x\notin X\cap Y.\]

Assume that the Lagrangian dual problem of the projection problem~\eqref{eq-projp2} has a nonempty set $\mathcal{M}_x$ of dual optimal multipliers for any $x\in Y,$ $x\notin X\cap Y$. 
This will hold for example when the Slater condition holds i.e., $h(\bar x)<0$ for some $\bar x\in\mathbb{R}^n$. The
Lagrangian function associated with the projection problem~\eqref{eq-projp2} is given by
\begin{align*}
    \mathcal{L}_x(y,\mu) = \|x - y\| + \mu h(y)\qquad y\in \mathbb{R}^n,\ \mu\ge0.
\end{align*}
By the KKT conditions, the optimal primal point and any dual optimal multiplier $\mu_x^*\in\mathcal{M}_x$ satisfy the following relations:

\begin{align}
    h(y_x^*)\le 0,\qquad
\mu^*_x\ge 0, \qquad \frac{x-y_x^*}{\|x-y_x^*\|} + \mu^*_x s_h(y^*_x)=0,\qquad \mu^*_x h(y_x^*) = 0, \label{opt_kkt}
\end{align}
where $s_h(y^*_x)$ is a subgradient of $h$ at the point $y_x^*$.
From the first equality in~\eqref{opt_kkt} we obtain
\begin{equation}\label{eq-murel}
    \mu^*_x s_h(y^*_x)= \frac{y_x^*-x}{\|x-y_x^*\|}\qquad\implies\qquad \mu^*_x \|s_h(y^*_x)\|=1.
    \end{equation}
Thus, $\mu^*_x$ must be positive, which by the last condition in~\eqref{opt_kkt} implies that $h(y^*_x)=0$. Hence,
\begin{equation}\label{eq-primalsol}
    y_x^*\in h^{-1}(0)\qquad\hbox{for all }x\in Y, \ x\notin X\cap Y.
    \end{equation}
Relation~\eqref{eq-murel} implies that 
\begin{align}\label{eq-musgd}
\mu^*_x=\frac{1}{\|s_h(y^*_x)\|}\qquad \hbox{for all $\mu_x^*\in\mathcal{M}_x$ and all $x\in Y$, $x\not\in X\cap Y$}.
\end{align}

When the Strong Slater condition $0 \not\in {\rm cl}(\partial h(h^{-1}(0)))$ holds, we have that  $0 \not\in (\partial h(h^{-1}(0)))$, which by convexity of $h$ implies that the Slater condition holds, i.e., $h(\bar x)<0$ for some $\bar x\in\mathbb{R}^n$. Moreover, we have
\begin{align}\label{eq-sgdb}
\|s_h(z)\|\ge \dist(0,{\rm cl}(\partial h(h^{-1}(0)))\qquad\hbox{ for all $s_h(z)\in \partial h(z)$ and 
for all $z$ satisfying $h(z)=0$}.
\end{align}
From relations~\eqref{eq-primalsol}--\eqref{eq-sgdb} and the definition of $\gamma$ in~\eqref{eq-gamma} it follows that 
\begin{align}\label{eq-mulb}
\mu^*_x\le \gamma \qquad \hbox{for all $\mu_x^*\in\mathcal{M}_x$ and all $x\in Y$, $x\not\in X\cap Y$}.
\end{align}

Next, we consider the optimal Lagrangian dual multipliers for the projection problem~\eqref{eq-projp2} for the case when $x\in X\cap Y$. We note that in this case $y^*_x=x$, and the Lagrangian optimality condition (the first equality in~\eqref{opt_kkt}) takes the following form:
\[v+\mu_x^*s_h(y^*_x)=0\qquad\hbox{for some $v\in\mathbb{R}^n$
with $\|v\|\le 1$,}\]
which comes from the fact that $\|x - y\|$ is not differentiable with respect to $y$ at the point $y=x$, but its subdifferential set at the point $y=x$ is given by the unit ball. The preceding condition can be satisfied with $v=0$ and $\mu^*_x=0$, and the other conditions in~\eqref{opt_kkt} are also satisfied with $\mu^*_x=0$ and $y_x=x$. 
Therefore, when $x\in X\cap Y$, we can choose $\mu^*_x=0$ as an optimal Lagrangian multiplier. With this choice of multipliers for the case when $x\in X\cap Y$ and relation 
\eqref{eq-mulb}, the upper bound for the multipliers in relation~\eqref{eq-c} holds with
$\bar c=\gamma$.

\section{Some Auxiliary Results}

\subsection{General Gradient Descent Lemma}

\begin{lemma}\label{lem-elem-grad} 
     Let $Z$ be a closed convex set, and let 
     $f(\cdot)$ be a continuously differentiable function. Consider an update of the following form:
     \[w=\Pi_Z[x-\a\nabla f(x)],\]
     where $\a>0$ and $x\in\re^n$. If the function $f(\cdot)$ is strongly convex and has Lipschitz continuous gradients (Assumptions~\ref{asum_lipschitz} and~\ref{asum_strong_convex} hold), then 
     we have for all $z\in Z$,
     \[\|w - z\|^2 \leq (1-\a \mu) \| x -z \|^2 - (1 - \a L) \| w - x\|^2 - 2 \a (f(w) - f(z)).\]
     \end{lemma}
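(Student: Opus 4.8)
The plan is to prove the standard projected-gradient descent inequality by combining three ingredients: the non-expansiveness of the projection (via the obtuse-angle property of $\Pi_Z$), strong convexity of $f$, and Lipschitz smoothness of $f$. I would start from the optimality condition characterizing $w=\Pi_Z[x-\a\nabla f(x)]$: for all $z\in Z$,
\[
\la x - \a\nabla f(x) - w,\, z - w\ra \le 0,
\]
which rearranges to $\la \nabla f(x),\, w - z\ra \le \tfrac{1}{\a}\la x - w,\, w - z\ra$. The next step is to expand $\la x - w, w - z\ra = \tfrac12\bigl(\|x-z\|^2 - \|w-z\|^2 - \|w-x\|^2\bigr)$, so that this bound reads
\[
2\a\,\la \nabla f(x),\, w - z\ra \le \|x-z\|^2 - \|w-z\|^2 - \|w-x\|^2 .
\]
This isolates $\|w-z\|^2$ on the right-hand side, and the remaining task is to bound $\la\nabla f(x), w-z\ra$ from below in terms of function values and the curvature terms.

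For the lower bound on $\la\nabla f(x), w-z\ra = \la\nabla f(x), w-x\ra + \la\nabla f(x), x-z\ra$, I would handle the two pieces separately. For the first piece, the Lipschitz smoothness inequality (Assumption~\ref{asum_lipschitz}) gives $f(w) - f(x) \le \la\nabla f(x), w-x\ra + \tfrac{L}{2}\|w-x\|^2$, hence $\la\nabla f(x), w-x\ra \ge f(w)-f(x) - \tfrac{L}{2}\|w-x\|^2$. For the second piece, strong convexity (Assumption~\ref{asum_strong_convex}) gives $f(z) - f(x) \ge \la\nabla f(x), z-x\ra + \tfrac{\mu}{2}\|z-x\|^2$, i.e. $\la\nabla f(x), x-z\ra \ge f(x) - f(z) + \tfrac{\mu}{2}\|x-z\|^2$. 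Adding these two, the $\pm f(x)$ terms cancel and I get
\[
\la\nabla f(x), w-z\ra \ge f(w) - f(z) + \tfrac{\mu}{2}\|x-z\|^2 - \tfrac{L}{2}\|w-x\|^2 .
\]
Substituting this into the projection inequality and rearranging yields exactly
\[
\|w-z\|^2 \le (1-\a\mu)\|x-z\|^2 - (1-\a L)\|w-x\|^2 - 2\a\bigl(f(w)-f(z)\bigr),
\]
which is the claim. Note this does not actually require $x\in Z$, only $x\in\re^n$, consistent with the statement; strong convexity and smoothness are used only through their defining inequalities on $Y$ (and here on $Z$), so applying the lemma later with $Z=Y$ is legitimate.

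There is no serious obstacle here — it is a routine chaining of well-known estimates. The only mild care needed is bookkeeping the $\tfrac12$ factors from the polarization identity and making sure the strong-convexity term attaches to $\|x-z\|^2$ (giving the $(1-\a\mu)$ coefficient) while the smoothness term attaches to $\|w-x\|^2$ (giving the $(1-\a L)$ coefficient). One should also double-check that $f$ being merely continuously differentiable (rather than differentiable on all of $\re^n$) is not an issue: the smoothness and strong-convexity inequalities are assumed to hold for all $x,y\in Y$, and when the lemma is invoked the relevant points lie in $Y=Z$, so the inequalities apply as stated.
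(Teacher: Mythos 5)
Your proposal is correct and follows essentially the same route as the paper: both decompose $\la\nabla f(x), w-z\ra$ into the pieces along $w-x$ and $x-z$, bound them with the smoothness and strong-convexity inequalities respectively, and combine with the projection property. The only cosmetic difference is that you invoke the variational-inequality (obtuse-angle) characterization of $\Pi_Z$ together with the three-point identity, while the paper uses the equivalent inequality $\|\Pi_Z[u]-z\|^2 \le \|u-z\|^2 - \|\Pi_Z[u]-u\|^2$ directly.
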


 \begin{proof}
   From the definition of $w$ and the non-expansiveness property of the projection,  we have for any $z \in Z$,
    \begin{align}\label{norm_diff11}
        \|w - z\|^2 &= \| \Pi_Z[x - \a \nabla f(x)] - z\|^2\cr
        &\leq \| x - \a \nabla f(x) - z\|^2 - \| w - (x - \a \nabla f(x)) \|^2 \cr
        & = \| x -z \|^2 - \| w - x \|^2 
        - 2 \a\la \nabla f(x), x -z \ra 
        - 2 \a \la \nabla f(x), w - x \ra.
    \end{align}
    We next bound the inner product terms on the right hand side of relation~\eqref{norm_diff11}. By the strong convexity of $f(\cdot)$ (Assumption~\ref{asum_strong_convex}),
    we have
    \begin{align}\label{third_term}
        - 2 \a \la \nabla f(x), x -z \ra \leq - 2 \a (f(x) - f(z)) - \a \mu \| x - z \|^2 .
    \end{align}
    Using the Lipschitz continuity of $\nabla f(\cdot)$ (Assumption~\ref{asum_lipschitz}), the last term on the right hand side of relation~\eqref{norm_diff11} can be bounded as follows:
    \begin{align}\label{fourth_term}
        - 2 \a \la \nabla f(x), w - x \ra 
        \leq 2 \a (f(x) - f(w)) + \a_k L \|w - x \|^2 .
    \end{align}
    Combining the estimates in relations~\eqref{third_term} and \eqref{fourth_term} with relation~\eqref{norm_diff11}, we obtain for any $z\in Z$,
    \[\|w - z\|^2 \leq (1-\a \mu) \| x -z\|^2 - (1 - \a L) \| w - x \|^2 - 2 \a (f(w) - f(z)),\]
    thus showing the stated relation.
\end{proof}

%%%%%%%%%%%%%%%%%%%%%%%%%%%%%%%%%%%%%%%%%%%%%%%%%%%%%%%%%%

\subsection{Some results on sequences}
\begin{lemma}\label{lem-scalar-seq}
Let $\{r_k\}$ be a non-negative scalar sequence 
and $\{s_k\}$ be a scalar sequence such that  the following relation holds for some $p\ge0$ and for all $k\ge1,$
\[r_{k+1}+s_{k+1}\le p r_k.\]
Then, we have for all $k\ge 1,$
\[r_{k+1} + \sum_{t=2}^{k+1} p^{k+1-t} s_t\le p^k r_1.\]
\end{lemma}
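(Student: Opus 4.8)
The plan is to prove the bound by induction on $k$, exploiting the fact that the hypothesis $r_{k+1}+s_{k+1}\le p r_k$ already has a telescoping shape; the only structural ingredient needed is $p\ge 0$, so that multiplying an inequality by a power of $p$ preserves its direction (the stated non-negativity of $\{r_k\}$ is not actually required for this particular estimate).

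\textbf{Base case ($k=1$).} The claimed inequality reads $r_2 + p^{0}s_2 \le p^{1} r_1$, i.e.\ $r_2 + s_2 \le p r_1$, which is precisely the hypothesis evaluated at $k=1$, so there is nothing to do.

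\textbf{Inductive step.} Assume $r_{k+1} + \sum_{t=2}^{k+1} p^{k+1-t} s_t \le p^{k} r_1$. I would first apply the hypothesis at index $k+1$ to get $r_{k+2}\le p\,r_{k+1} - s_{k+2}$, and then split off the top term of the new sum,
\[
\sum_{t=2}^{k+2} p^{k+2-t} s_t \;=\; s_{k+2} \;+\; p\sum_{t=2}^{k+1} p^{k+1-t} s_t .
\]
Adding these two relations, the $s_{k+2}$ contributions cancel and one is left with
\[
r_{k+2} + \sum_{t=2}^{k+2} p^{k+2-t} s_t \;\le\; p\Big( r_{k+1} + \sum_{t=2}^{k+1} p^{k+1-t} s_t \Big) \;\le\; p\cdot p^{k} r_1 \;=\; p^{k+1} r_1 ,
\]
where the last inequality uses the induction hypothesis together with $p\ge 0$. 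This closes the induction and gives the claim.

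An equivalent route, which I would mention as an alternative, avoids induction entirely: multiply the hypothesis $r_{t+1}+s_{t+1}\le p\,r_t$ by the non-negative weight $p^{k-t}$ for each $t=1,\dots,k$ and sum over $t$; the $r$-terms telescope (every intermediate $p^{k+1-t}r_t$ with $2\le t\le k$ appears on both sides and cancels), leaving $r_{k+1}$ on the left and $p^{k}r_1$ on the right, and the identity $\sum_{t=1}^{k}p^{k-t}s_{t+1}=\sum_{t=2}^{k+1}p^{k+1-t}s_t$ finishes it. There is no genuine obstacle here; the only care needed is the index bookkeeping in the sums and the trivial edge case $p=0$ (where $p^{0}=1$ and the statement collapses to the hypothesis itself). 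I would present the inductive version for cleanliness.
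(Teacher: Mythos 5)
Your proof is correct and follows essentially the same route as the paper: induction on $k$, with the base case being the hypothesis at $k=1$ and the inductive step obtained by multiplying the induction hypothesis by $p\ge 0$ and combining it with the recursion at index $k+1$. The alternative telescoping summation you mention (and your observation that non-negativity of $\{r_k\}$ is not needed) are fine but do not change the substance of the argument.
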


\begin{proof}
The proof is by the mathematical induction on $k$. For $k=1$, the stated relation reduces to 
\[r_2+s_2\le p r_1,\]
which holds in view of the given recursive relation 
$r_{k+1}+s_{k+1}\le p r_k$ for $k=1$.
Now, assume that 
for a given $k$, the relation
\[r_{k+1} + \sum_{t=2}^{k+1} p^{k+1-t} s_t\le p^k r_1\]
is valid. Consider the case of $k+1$. By the 
given recursive relation for $r_{k+2}+s_{k+2}$, we have
\[r_{k+2}+s_{k+2}\le p r_{k+1}.\]
By adding $p\sum_{t=2}^{k+1} p^{k+1-t} s_t$ 
to both sides of the preceding inequality, we obtain
\[r_{k+2}+s_{k+2} +p\sum_{t=2}^{k+1} p^{k+1-t} s_t
\le p \left(r_{k+1} +\sum_{t=2}^{k+1} p^{k+1-t} s_t\right)
\le p\cdot p^k r_1=p^{k+1}r_1.\]
Since $s_{k+2} +p\sum_{t=2}^{k+1} p^{k+1-t} s_t=\sum_{t=2}^{k+2} p^{k+2-t}s_t$, it follows that 
\[r_{k+2}+\sum_{t=2}^{k+2} p^{k+2-t}s_t\le p^{k+1}r_1,\]
where the last inequality follows by the inductive hypothesis.
Thus, the stated relation also holds for $k+1$.
\end{proof}

%%%%%%%%%%%%%%%%%%%%%%%%%%%%%%%%%%%%%%%%%%%%%%%%%%%%%%%%%%

\begin{lemma}\label{lem_sum_seq}
    Let $\{s_i\}_{i=0}^{T+1}$ be a sequence of positive non-decreasing quantities. Then, for any $T \geq 1$,
    \begin{align*}
        \min_{1 \leq k \leq T} \frac{s_{k+1}}{\sum_{i=1}^{k} s_i} \leq \frac{1}{T}
       \left(\frac{s_{T+1}}{s_0}\right)^{\frac{1}{T}} \ln \left( \frac{e s_{T+1}}{s_0} \right) .
    \end{align*}
\end{lemma}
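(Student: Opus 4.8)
The plan is to set $S_k:=\sum_{i=1}^k s_i$, $\beta:=\min_{1\le k\le T}\frac{s_{k+1}}{S_k}$, and $R:=\frac{s_{T+1}}{s_0}$ (note $R\ge1$ by monotonicity), and to prove the equivalent bound $\beta\le g:=\frac1T R^{1/T}\ln(eR)$. Two easy cases come first. If $\beta\le 1/T$ we are done, because $R^{1/T}\ge1$ and $\ln(eR)=1+\ln R\ge1$ give $g\ge 1/T$. If $R\ge e^{T-1}$ then $\ln(eR)\ge T$, so a hypothetical $\beta>g$ would force $\beta>R^{1/T}$; but $\frac{s_{k+1}}{S_k}\ge\beta$ yields $S_{k+1}\ge(1+\beta)S_k$ and hence $S_{T+1}\ge(1+\beta)^TS_1$, while $\frac{s_{T+1}}{S_T}\ge\beta$ yields $s_{T+1}\ge\frac{\beta}{1+\beta}S_{T+1}$, and together with $S_1=s_1\ge s_0$ these force $s_{T+1}\ge\beta(1+\beta)^{T-1}s_0\ge\beta^Ts_0>Rs_0=s_{T+1}$, a contradiction. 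So it remains to treat $1/T<\beta$ together with $R<e^{T-1}$.

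In that regime I would use monotonicity a second time: since $s_i$ is non-decreasing, $S_k\le k\,s_{k+1}$, hence $\frac{s_{k+1}}{S_k}\ge1/k$ for all $k$. Let $m:=\lceil 1/\beta\rceil$, so $m\le T$. For $k<m$ one has $1/k>\beta$, so $S_{k+1}\ge\frac{k+1}{k}S_k$, and telescoping gives $S_m\ge m\,S_1\ge m\,s_0$; for $m\le k\le T$ one has $S_{k+1}\ge(1+\beta)S_k$, so $S_{T+1}\ge(1+\beta)^{T+1-m}m\,s_0$; with $s_{T+1}\ge\frac{\beta}{1+\beta}S_{T+1}$ as above this gives $R\ge\beta\,m\,(1+\beta)^{T-m}$. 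The function $x\mapsto\ln(\beta x)+(T-x)\ln(1+\beta)$ is concave and, using $\ln\beta+\ln(\tfrac1\beta+1)=\ln(1+\beta)$, takes the same value $(T-\tfrac1\beta)\ln(1+\beta)$ at $x=\tfrac1\beta$ and at $x=\tfrac1\beta+1$; since $m\in[\tfrac1\beta,\tfrac1\beta+1)$, concavity forces $\ln R\ge\ln(\beta m)+(T-m)\ln(1+\beta)\ge(T-\tfrac1\beta)\ln(1+\beta)$, i.e.
\[
R\ \ge\ (1+\beta)^{\,T-1/\beta}.
\]

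Finally I would deduce $\beta\le g$ from the displayed inequality together with $1/T<\beta$ and $R<e^{T-1}$, splitting once more on the size of $\beta$. For $\beta<\tfrac1{T-1}$ one actually has $m=T$ above, so $R\ge\beta T$; then if $R\ge e^{1/(T-1)}$ one checks directly that $g\ge\tfrac1{T-1}>\beta$, while if $R<e^{1/(T-1)}$ the elementary fact $R^{1-1/T}\le 1+\ln R$ on $[1,e^{1/(T-1)}]$ upgrades $\beta\le R/T$ to $\beta\le g$. For $\beta\ge\tfrac1{T-1}$ one combines $R\ge(1+\beta)^{T-1/\beta}$ with $(1+\beta)^{1/\beta}\le e$ and the two-sided Taylor bound $\beta-\tfrac{\beta^2}{2}\le\ln(1+\beta)\le\beta$, substitutes the resulting lower bound on $\ln R$ into $\ln(eR)=1+\ln R$, and verifies $\beta\le g$ by elementary calculus. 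The real obstacle is this last step: the inequality is tight at $R=1$ (there $s_0=\dots=s_{T+1}$, so $\beta=1/T=g$), hence second-order tight in $\beta$ near $R=1$, so none of the logarithmic bounds can be loose; isolating the clean intermediate estimate $R\ge(1+\beta)^{T-1/\beta}$ via the concavity argument, and peeling off the small-$\beta$ range separately, are what keep the constant-chasing tractable.
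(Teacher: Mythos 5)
Your route is genuinely different from the paper's, and most of it checks out. The paper proves the lemma in a few lines without any case analysis: it sets $A_k=\frac{1}{s_{k+1}}\sum_{i=1}^k s_i$, uses the telescoping identity $\frac{s_{k+1}}{s_{k+2}}=A_{k+1}-A_k+A_k\bigl(1-\frac{s_{k+1}}{s_{k+2}}\bigr)$, sums to get $\max_k A_k\ge \frac{S_T}{1+T-S_T}$ with $S_T=\sum_{k=0}^{T-1}\frac{s_{k+1}}{s_{k+2}}$, lower-bounds $S_T$ by AM--GM as $T(s_0/s_{T+1})^{1/T}$, and finishes with $1-\zeta\le\ln(1/\zeta)$. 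In your argument, Case 1 ($\beta\le 1/T$), Case 2 ($R\ge e^{T-1}$, by contradiction), the growth recursions $S_{k+1}\ge(1+\beta)S_k$ and $S_{k+1}\ge\frac{k+1}{k}S_k$, the bound $s_{T+1}\ge\frac{\beta}{1+\beta}S_{T+1}$, the concavity step yielding the clean intermediate estimate $R\ge(1+\beta)^{T-1/\beta}$, and the sub-case $\beta<\frac{1}{T-1}$ (where $m=T$, $R\ge\beta T$, and $R^{1-1/T}\le 1+\ln R$ on $[1,e^{1/(T-1)}]$ for $T\ge 2$) are all correct as stated.

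The gap is the final sub-case $\beta\ge\frac{1}{T-1}$, which you do not actually prove. It reduces to showing $T\beta\le e^{L_0/T}(1+L_0)$ with $L_0=(T-\tfrac{1}{\beta})\ln(1+\beta)$; this inequality is true (and, as you note, tight as $\beta\downarrow 1/T$), but ``verify by elementary calculus'' is an assertion, not an argument, and the specific tools you name do not close it. In particular, lower-bounding $L_0$ via $\ln(1+\beta)\ge\beta-\frac{\beta^2}{2}$ is useless once $\beta\ge 2$: at $\beta=2$ it gives $L_0\ge 0$ and the target degenerates to $1\ge 2T$, which is false, and the bound is already too weak for $\beta$ near $2$. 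So at minimum you need a further split (e.g., a separate, easy treatment of moderate-to-large $\beta$ using $\ln(1+\beta)\ge c\beta$ on a bounded range, or a direct argument from $R\ge(1+\beta)^{T-1/\beta}$ when $\beta\ge 1$) together with a genuinely carried-out monotonicity/convexity analysis near the tight regime. Until that step is written out, the proof is incomplete; given how short the paper's telescoping-plus-AM--GM argument is, it is also worth noting that the case-based route buys you nothing extra here beyond the intermediate estimate $R\ge(1+\beta)^{T-1/\beta}$ itself.
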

\begin{proof}
The proof of this result is established in \cite{liu2023stochastic} [Lemma 30] and \cite{moshtaghifar2025dada} [Lemma A.1]. In our case, some of the indices of the sequences we work with will be changed. Hence, accordingly the result will be modified. Let us consider
\begin{align*}
    A_{k} := \frac{1}{s_{k+1}} \sum_{i=1}^{k} s_i \quad \text{for all $k \geq 1$} \quad \text{and} \quad A_0 = 0 .
\end{align*}
Then, we can see that
\begin{align*}
    A_{k+1} s_{k+2} - A_{k} s_{k+1} = \sum_{i=1}^{k+1} s_i - \sum_{i=1}^{k} s_i = s_{k+1} .
\end{align*}
Diving both sides by $s_{k+2}$, and after some algebraic manipulations, we obtain
\begin{align*}
    \frac{s_{k+1}}{s_{k+2}} = A_{k+1} - A_k \frac{s_{k+1}}{s_{k+2}} = A_{k+1} - A_k + A_k \left( 1 - \frac{s_{k+1}}{s_{k+2}} \right) .
\end{align*}
Summing the preceding relation from $k = 0$ to $T-1$ and denoting $S_T = \sum_{k=0}^{T-1} \frac{s_{k+1}}{s_{k+2}}$, we obtain
\begin{align*}
    S_T = A_{T} - A_0 + \sum_{k=0}^{T-1} A_k \left( 1 - \frac{s_{k+1}}{s_{k+2}} \right) = A_{T} - A_0 + A_0 \left( 1 - \frac{s_{1}}{s_{2}} \right) + \sum_{k=1}^{T-1} A_k \left( 1 - \frac{s_{k+1}}{s_{k+2}} \right). 
\end{align*}
Since, $A_0 = 0$, we can further simplify the preceding expression as
\begin{align*}
    S_T = A_{T} + \sum_{k=1}^{T-1} A_k \left( 1 - \frac{s_{k+1}}{s_{k+2}} \right) \leq \bar A_T + \bar A_T \sum_{k=0}^{T-1} \left( 1 - \frac{s_{k+1}}{s_{k+2}} \right) = \bar A_T (1 + T - S_T) ,
\end{align*}
where $\bar A_T = \max_{1 \leq k \leq T} A_k$. The preceding relation implies
\begin{align}
    \bar A_T \geq \frac{S_T}{1 + T - S_T} . \label{max_A_lb}
\end{align}
By the Arithmetic Mean -- Geometric Mean (AM-GM) Inequality, we obtain
\begin{align*}
    \frac{S_T}{T} = \frac{1}{T} \sum_{k=0}^{T-1} \frac{s_{k+1}}{s_{k+2}} \geq \left( \prod_{k=0}^{T-1} \frac{s_{k+1}}{s_{k+2}} \right)^{\frac{1}{T}} = \left(\frac{s_1}{s_{T+1}} \right)^{\frac{1}{T}} .
\end{align*}
Since $s_0 \leq s_1$, hence we obtain
\begin{align*}
    S_T \geq T \left(\frac{s_0}{s_{T+1}} \right)^{\frac{1}{T}} .
\end{align*}
Let us denote $\zeta_T = \left(\frac{s_0}{s_{T+1}} \right)^{\frac{1}{T}}$. Then substituting the preceding relation back to relation~\eqref{max_A_lb} yields
\begin{align*}
    \bar A_T \geq \frac{T \zeta_T}{1 + T ( 1 - \zeta_T)} .
\end{align*}
We note that for $\zeta_T \in (0,1]$, we obtain $1 - \zeta_T \leq - \ln(\zeta_T) = \ln \left( \frac{1}{\zeta_T} \right)$ for all $T \geq 1$. Substituting this relation in the preceding expression, we obtain
\begin{align*}
    \bar A_T \geq \frac{T \zeta_T}{1 + \ln \left( \frac{1}{\zeta_T^T} \right)} = \frac{T \zeta_T}{\ln \left( \frac{e}{\zeta_T^T} \right)} .
\end{align*}
We note that $1/{\bar A_T} = \min_{1 \leq k \leq T} \left\{1/{A_k} \right\} = \min_{1 \leq k \leq T} \frac{s_{k+1}}{\sum_{i=1}^k s_i}$. Hence, using this relation to the preceding one, we obtain the desired relation.
\end{proof}

%%%%%%%%%%%%%%%%%%%%%%%%%%%%%%%%%%%%%%%%%%%%%%%%%%%%%%%%%%
%%%%%%%%%%%%%%%%%%%%%%%%%%%%%%%%%%%%%%%%%%%%%%%%%%%%%%%%%%

\begin{lemma}\label{lem_for_itr_bd_tdows}
    Let $\{p_k\}_{k \geq 0}$ be the sequences of non-decreasing numbers as in Algorithm~\ref{algo_tdows_feas}. Then, the following statements are valid:
    \begin{itemize}
        \item[(a)] For $p_0 = 0$ and $p_1 > 0$, we have
        \begin{align*}
            \sum_{k=1}^T \frac{p_k - p_{k-1}}{p_k \left(\ln \left( \frac{e p_k}{p_1} \right) \right)^2} \leq 2 .
        \end{align*}
        \item[(b)] For $p_0 > 0$, we have
        \begin{align*}
            \sum_{k=1}^T \frac{p_k - p_{k-1}}{p_k \left(\ln \left( \frac{e p_k}{p_0} \right) \right)^2} \leq 1 .
        \end{align*}
    \end{itemize}
\end{lemma}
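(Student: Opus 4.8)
The plan is a standard ``sum $\le$ integral'' (telescoping) estimate: treat each summand as a lower Riemann estimate for the decreasing function $t \mapsto \frac{1}{t(\ln(et/p_0))^2}$ over the interval $[p_{k-1},p_k]$, sum to obtain a single integral over $[p_0,p_T]$, and evaluate that integral by the substitution $u = \ln(et/p_0)$.

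First I would handle part (b), where $p_0 > 0$. Fix $k$ with $p_k > p_{k-1}$. On the interval $t \in [p_{k-1}, p_k]$ we have $t \le p_k$ and, since $t/p_0 \ge 1$, also $1 \le \ln(et/p_0) \le \ln(ep_k/p_0)$; hence the integrand $\frac{1}{t(\ln(et/p_0))^2}$ is everywhere at least $\frac{1}{p_k(\ln(ep_k/p_0))^2}$ on this interval, so integrating over an interval of length $p_k - p_{k-1}$ gives
\[ \frac{p_k - p_{k-1}}{p_k (\ln(ep_k/p_0))^2} \;\le\; \int_{p_{k-1}}^{p_k} \frac{dt}{t(\ln(et/p_0))^2}. \]
If $p_k = p_{k-1}$ both sides vanish, so this holds for every $k$. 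Summing over $k = 1,\dots,T$ telescopes the right-hand sides into $\int_{p_0}^{p_T} \frac{dt}{t(\ln(et/p_0))^2}$. With $u = \ln(et/p_0)$, $du = dt/t$, the limits $t = p_0,\, p_T$ become $u = 1,\ \ln(ep_T/p_0)$, so
\[ \int_{p_0}^{p_T} \frac{dt}{t(\ln(et/p_0))^2} = \int_{1}^{\ln(ep_T/p_0)} \frac{du}{u^2} = 1 - \frac{1}{\ln(ep_T/p_0)} \le 1, \]
which is the claimed bound.

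For part (a), with $p_0 = 0$ and $p_1 > 0$, the $k=1$ term has to be peeled off because the integral lower bound degenerates at $0$; but directly $\ln(ep_1/p_1) = 1$, so that term equals $\frac{p_1 - 0}{p_1 \cdot 1} = 1$. For $k = 2,\dots,T$ all the relevant values satisfy $p_k \ge p_1 > 0$, so I would rerun the integral-comparison argument verbatim with $p_1$ replacing $p_0$, obtaining $\sum_{k=2}^T \frac{p_k - p_{k-1}}{p_k(\ln(ep_k/p_1))^2} \le 1 - \frac{1}{\ln(ep_T/p_1)} \le 1$. Adding back the isolated $k=1$ term gives the bound $2$.

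I do not expect a genuine obstacle here; the only points requiring a line of care are (i) checking that the integrand stays well-defined and positive, i.e. $\ln(et/p_0) \ge 1$ (resp.\ $\ln(et/p_1) \ge 1$) for $t$ in the integration range, which is exactly the condition $t \ge p_0$ (resp.\ $t \ge p_1$) guaranteed by monotonicity of $\{p_k\}$, and (ii) noting that $\{p_k\}$ being only non-decreasing causes no problem, since indices with $p_k = p_{k-1}$ contribute zero to both the sum and the integral.
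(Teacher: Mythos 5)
Your proposal is correct and follows essentially the same route as the paper's proof: a per-interval integral comparison for the decreasing integrand, telescoping the integrals, and (for part (a)) peeling off the $k=1$ term, which equals $1$; the paper merely normalizes by $p_0$ (resp.\ $p_1$) and writes the integrand as $\tfrac{1}{z(1+\ln z)^2}$, which is the same computation as your substitution $u=\ln(et/p_0)$.
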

\begin{proof}
    The proof for both the parts follows along the similar lines as \cite{ivgi2023dog} [Lemma~6] with some minor modifications. 
    
    \noindent\textit{Part (a):} We have
    \begin{align*}
        \sum_{k=1}^T \frac{p_k - p_{k-1}}{p_k \left(\ln \left( \frac{e p_k}{p_1} \right) \right)^2} = \frac{p_1 - p_0}{p_1 (\ln e)^2} + \sum_{k=2}^T \frac{ \frac{p_k}{p_1} - \frac{p_{k-1}}{p_1}}{\frac{p_k}{p_1} \left(\ln \left( \frac{e p_k}{p_1} \right) \right)^2} \leq 1 + \sum_{k=2}^T \int_{\frac{p_{k-1}}{p_1}}^{\frac{p_k}{p_1}} \frac{dz}{z(1+\ln z)^2} \leq 1 + \int_{1}^{\frac{p_T}{p_1}} \frac{dz}{z(1+\ln z)^2} .
    \end{align*}
    Upon integrating the last quantity on the right hand side, we can finally obtain
    \begin{align*}
        \sum_{k=1}^T \frac{p_k - p_{k-1}}{p_k \left(\ln \left( \frac{e p_k}{p_1} \right) \right)^2} \leq 1 + 1 - \frac{1}{1+\ln \left( \frac{p_T}{p_1} \right)} \leq 2 ,
    \end{align*}
    which concludes the proof for part (a).

    \noindent\textit{Part (b):} The proof for part (b) follows along the similar lines as above. We have
    \begin{align*}
        \sum_{k=1}^T \frac{p_k - p_{k-1}}{p_k \left(\ln \left( \frac{e p_k}{p_0} \right) \right)^2} = \sum_{k=1}^T \frac{ \frac{p_k}{p_0} - \frac{p_{k-1}}{p_0}}{\frac{p_k}{p_0} \left(\ln \left( \frac{e p_k}{p_0} \right) \right)^2} \leq \sum_{k=1}^T \int_{\frac{p_{k-1}}{p_0}}^{\frac{p_k}{p_0}} \frac{dz}{z(1+\ln z)^2} \leq \int_{1}^{\frac{p_T}{p_0}} \frac{dz}{z(1+\ln z)^2} .
    \end{align*}
    Integrating the preceding relation, we obtain
    \begin{align*}
        \sum_{k=1}^T \frac{p_k - p_{k-1}}{p_k \left(\ln \left( \frac{e p_k}{p_0} \right) \right)^2} \leq 1 - \frac{1}{1+\ln \left( \frac{p_T}{p_0} \right)} \leq 1 .
    \end{align*}
    This concludes the proof.
\end{proof}

%%%%%%%%%%%%%%%%%%%%%%%%%%%%%%%%%%%%%%%%%%%%%%%%%%%%%%%%%%%%%%%%%%%%%%%%%%%%%%%%%%%%%%%%%%%%%%%%%%%%%%%%%%%%%%%%%%%%%%%%%%%%%%%%%%%%%%%%%%%%%%%%%%%%%%%%%%%%%%%%%%%%%%%%%%%%%%%%%%%%%%%%%%%%%%%%%%%%%%%%%%%%%%%%%%%%%%%%

\section{Missing Proofs of Section~\ref{sec:RandFeas}}

\subsection{Proof of Lemma~\ref{lem_rand_feas}}\label{lem_rand_feas_proof}

\begin{proof}
    \textit{Part (a):} We take any arbitrary point $x \in X \cap Y$. Then by the randomized feasibility update, we see
    \begin{align}
        \| z_k^i - x \|^2 &= \left\| \Pi_{Y} \left[z_{k}^{i-1} - \beta\, \frac{g^+_{\omega_{k}^{i}}(z_{k}^{i-1})}{\|d_{k}^{i}\|^2}\, d_{k}^{i}\right] - x \right\|^2 \nonumber\\
        & \leq \|z_{k}^{i-1} - x\|^2 - 2 \beta \frac{g^+_{\omega_{k}^{i}}(z_{k}^{i-1})}{\|d_{k}^{i}\|^2} \la d_{k}^{i}, z_{k}^{i-1} - x \ra + \beta^2 \frac{\left(g^+_{\omega_{k}^{i}}(z_{k}^{i-1})\right)^2}{\|d_{k}^{i}\|^2}. \nonumber
    \end{align}
Note that $d_k^i \in \partial g^+_{\omega_{k}^{i}}(z_{k}^{i-1})$. Hence by the convexity of the function $g_i(\cdot)$ for all $i \in \{1, \ldots,m\}$ (Assumption~\ref{asum_set1}), we obtain the relation
\begin{align}
    \la d_{k}^{i}, z_{k}^{i-1} - x \ra \geq g_{\omega_{k}^{i}}(z_{k}^{i-1}) - g_{\omega_{k}^{i}}(x) \geq g^+_{\omega_{k}^{i}}(z_{k}^{i-1}). \nonumber
\end{align}
Substituting the preceding relation back to the main expression, we obtain
\begin{align}
    \| z_k^i - x \|^2 \leq \|z_{k}^{i-1} - x\|^2 - \beta(2- \beta) \frac{\left(g^+_{\omega_{k}^{i}}(z_{k}^{i-1})\right)^2}{\|d_{k}^{i}\|^2} . \label{fes_eq}
\end{align}
Dropping the last non-positive term in relation~\eqref{fes_eq} and summing it from $i=1, \ldots, N_k$, and noting that $z_k^0 = v_k$ and $z_k^{N_k} = x_k$, we obtain the first relation of the lemma.

\textit{Part (b):} Next, if the gradient $\|d_k^i\| \leq M_g$, then using this bound in relation~\eqref{fes_eq}, we obtain
\begin{align}
    \| z_k^i - x \|^2 \leq \|z_{k}^{i-1} - x\|^2 - \beta(2- \beta) \frac{\left(g^+_{\omega_{k}^{i}}(z_{k}^{i-1})\right)^2}{M_g^2} . \label{eq_fes1}
\end{align}
Taking minimum with respect to $x \in X \cap Y$ on both sides of the preceding relation, we obtain
\begin{align}
    \dist^2(z_k^i, X \cap Y) \leq \dist^2(z_{k}^{i-1}, X \cap Y) - \beta(2- \beta) \frac{\left(g^+_{\omega_{k}^{i}}(z_{k}^{i-1})\right)^2}{M_g^2} . \nonumber
\end{align}
Denote the sigma algebra $\widetilde \cF_{k-1} : = \cF_{k-1} \cup \{N_k\}$.
Taking conditional expectation with respect to the sigma algebra $\widetilde \cF_{k-1}$ on both sides of the preceding relation, we obtain almost surely for all $k \geq 1$, and all $i=1,\ldots,N_k$
\begin{align}
    \EXP{\dist^2(z_k^i, X \cap Y) \mid \widetilde \cF_{k-1}} \leq \EXP{\dist^2(z_{k}^{i-1}, X \cap Y) \mid \widetilde \cF_{k-1}} - \frac{\beta(2- \beta)}{M_g^2} \EXP{\left(g^+_{\omega_{k}^{i}}(z_{k}^{i-1})\right)^2 \mid \widetilde \cF_{k-1}} . \label{feas_eq2}
\end{align}
Applying the law of iterated expectations and Assumption~\ref{asum_err_bd} to the last quantity on the right hand side of relation~\eqref{feas_eq2} yields almost surely for all $i = 1, \ldots, N_k$ and $k \geq 1$,
\begin{align}
    \EXP{\left(g^+_{\omega_{k}^{i}}(z_{k}^{i-1})\right)^2 \mid \widetilde \cF_{k-1}} = \EXP{ \EXP{\left(g^+_{\omega_{k}^{i}}(z_{k}^{i-1})\right)^2 \mid \widetilde \cF_{k-1} \cup \{z_k^{i-1}\}} \mid \widetilde \cF_{k-1}} \geq \frac{1}{c} \EXP{\dist^2(z_k^{i-1}, X \cap Y) \mid \widetilde \cF_{k-1}} . \label{feas_new_eq1}
\end{align}
Substituting the lower estimate from the preceding relation back to relation~\eqref{feas_eq2}, we obtain almost surely,
\begin{align}
    \EXP{\dist^2(z_k^i, X \cap Y) \mid \widetilde \cF_{k-1}} \leq \left( 1 - \frac{\beta(2- \beta)}{c M_g^2} \right) &\EXP{\dist^2(z_{k}^{i-1}, X \cap Y) \mid \widetilde \cF_{k-1}} . \label{eq_feas5}
\end{align}
Let us denote the constant $q$ as
\begin{align}
    q = \frac{\beta(2- \beta)}{c M_g^2} . \label{const_q}
\end{align}
The constant $q$ is always positive since $\beta \in (0,2)$. We would want $q \leq 1$. Note that when $q = 1$, then the quantity $\EXP{\dist^2(z_k^i, X \cap Y) \mid \widetilde \cF_{k-1}} = 0$ for all $i = 1, \ldots, N_k$. This condition would imply we are already on the set which might be unlikely. Hence, let us assume that $q < 1$. Using the fact that $x_k = z_k^{N_k}$, we can recursively write expression~\eqref{eq_feas5} as
\begin{align}
    \EXP{\dist^2(x_k, X \cap Y) \mid \widetilde \cF_{k-1}} \leq \left( 1 - q \right)^{N_k - i + 1} &\EXP{\dist^2(z_{k}^{i-1}, X \cap Y) \mid \widetilde \cF_{k-1}} \quad \text{a.s.} \nonumber
\end{align}
Using equation~\eqref{feas_new_eq1} to upper estimate the right hand side of the preceding relation, we obtain a.s.
\begin{align}
    \EXP{\dist^2(x_k, X \cap Y) \mid \widetilde \cF_{k-1}} \leq c \left( 1 - q \right)^{N_k - i + 1} \EXP{\left(g^+_{\omega_{k}^{i}}(z_{k}^{i-1})\right)^2 \mid \widetilde \cF_{k-1}} . \nonumber
\end{align}
The above expression can be re-written yielding almost surely
\begin{align}
    \EXP{\left(g^+_{\omega_{k}^{i}}(z_{k}^{i-1})\right)^2 \mid \widetilde \cF_{k-1}} \geq \frac{1}{c} \frac{1}{\left( 1 - q \right)^{N_k - i + 1}} \EXP{\dist^2(x_k, X \cap Y) \mid \widetilde \cF_{k-1}} . \nonumber
\end{align}
Summing both sides of the above expression for $i = 1, \ldots, N_k$, and noting that $\sum_{i=1}^{N_k} \frac{1}{\left( 1 - q \right)^{N_k - i + 1}} = \frac{1 - (1-q)^{N_k}}{q(1-q)^{N_k}}$, we obtain almost surely
\begin{align}
    \sum_{i=1}^{N_k} \EXP{\left(g^+_{\omega_{k}^{i}}(z_{k}^{i-1})\right)^2 \mid \widetilde \cF_{k-1}} \geq \frac{1 - (1-q)^{N_k}}{c q(1-q)^{N_k}} \EXP{\dist^2(x_k, X \cap Y) \mid \widetilde \cF_{k-1}} . \nonumber
\end{align}
Using the definition of $q$ from relation~\eqref{const_q}, the preceding relation can be simplified almost surely as
\begin{align}
    \frac{\beta(2-\beta)}{M_g^2} \sum_{i=1}^{N_k} \EXP{\left(g^+_{\omega_{k}^{i}}(z_{k}^{i-1})\right)^2 \mid \widetilde \cF_{k-1}} \geq ((1-q)^{-N_k} - 1) \EXP{\dist^2(x_k, X \cap Y) \mid \widetilde \cF_{k-1}} . \label{eq_feas6}
\end{align}
Next, we take conditional expectation on both sides of relation~\eqref{eq_fes1} given the sigma algebra $\widetilde \cF_{k-1}$, sum the expression over $i=1, \ldots, N_k$, and then apply relation~\eqref{eq_feas6} to upper estimate the last quantity on the right hand side to obtain almost surely the desired relation of the lemma in part (b).
\end{proof}

%%%%%%%%%%%%%%%%%%%%%%%%%%%%%%%%%%%%%%%%%%%%%%%%%%%%%%%%%%%%%%%%%%%%%%%%%%%%%%%%%%%%%%%%%%%%%%%%%%%%%%%%%%%%%%%%%%%%%%%%%%%%%%%%%%%%%%%%%%%%%

\section{Missing Proofs of Section~\ref{sec:StrongConvex}}

\subsection{Proof of Lemma~\ref{lem-viter}}\label{lem-viter-proof}

\begin{proof}
    We consider Lemma~\ref{lem-elem-grad} and substitute $Z=Y$, $w=v_{k+1}$, $x=x_k$, $z=y$, and $\a=\a_k$ to obtain the desired result of the lemma.
\end{proof}

%%%%%%%%%%%%%%%%%%%%%%%%%%%%%%%%%%%%%%%%%%%%%%%%%%%%%%%%%%

\subsection{Lemma on boundedness of iterates for Algorithm~\ref{algo_grad_descent}}\label{lem-bounded-iterates-proof}
\begin{lemma}\label{lem-bounded-iterates}
Let Assumptions~\ref{asum_set1}, \ref{asum_lipschitz}, and~\ref{asum_strong_convex} hold,
and assume that the step size satisfies $0<\a_k \le 1/L$ for all $k\ge0$ and $\beta \in(0,2)$.
Moreover, assume that the random initial point is bounded by a constant, i.e., $\|x_0\|\le M_0$ surely for some scalar $M_0>0$.
Then, the iterates $\{x_k\}$, $\{v_k\}$ and $\{z_k^i, i=1,\ldots,N_k, k\ge1\}$ are surely bounded by a constant, i.e., for all $k \geq 1$,
\[\|x_k-x^*\|\le B_0+\|x^*\|, \qquad \|v_k-x^*\|\le B_0+\|x^*\|,\]
\[\|z_{k}^i-x^*\|\le B_0+\|x^*\| \quad \hbox{for all $i=1,\ldots,N_k-1$},\]
where $x^*$ is the solution to problem~\eqref{prob_eq}, and $B_0\ge M_0$ is such that
$\|x\|\le B_0$ for all $x$ that lie in the level set $\in\{u\in\rn \mid f(u)\le f(x^*)\}$.
\end{lemma}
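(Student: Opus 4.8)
The plan is to prove the stated bounds by an induction on $k$ for the scalar quantity $\|x_k-x^*\|$, chaining the gradient-step estimate of Lemma~\ref{lem-viter} with the feasibility-step estimate of Lemma~\ref{lem_rand_feas}(a). Since $x^*\in X\cap Y\subseteq Y$ is feasible, I would first apply Lemma~\ref{lem-viter} with $y=x^*$ and, using $\a_k\le 1/L$ so that $1-\a_k L\ge 0$, discard the nonpositive term $-(1-\a_k L)\|v_{k+1}-x_k\|^2$ to obtain
\[
\|v_{k+1}-x^*\|^2\le(1-\a_k\mu)\|x_k-x^*\|^2-2\a_k\big(f(v_{k+1})-f(x^*)\big).
\]

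The main obstacle is that $v_{k+1}$ produced by the gradient step need not be feasible for $X$, so $f(v_{k+1})-f(x^*)$ may be negative and the last term need not help the contraction. I would resolve this by a dichotomy. If $f(v_{k+1})\ge f(x^*)$, the last term is nonpositive and, since $1-\a_k\mu\le 1$, the display gives $\|v_{k+1}-x^*\|^2\le\|x_k-x^*\|^2$. If instead $f(v_{k+1})<f(x^*)$, then $v_{k+1}\in Y$ lies in the level set $\{u\mid f(u)\le f(x^*)\}$, hence $\|v_{k+1}\|\le B_0$ and $\|v_{k+1}-x^*\|\le B_0+\|x^*\|$ by the triangle inequality. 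In either case $\|v_{k+1}-x^*\|\le\max\{\|x_k-x^*\|,\ B_0+\|x^*\|\}$, and then Lemma~\ref{lem_rand_feas}(a) with $x=x^*$ yields $\|x_{k+1}-x^*\|\le\|v_{k+1}-x^*\|$, so the same bound is inherited by $x_{k+1}$.

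It remains to close the induction and read off the remaining iterates. The base case holds because $\|x_0-x^*\|\le\|x_0\|+\|x^*\|\le M_0+\|x^*\|\le B_0+\|x^*\|$, using $B_0\ge M_0$; the inductive step is immediate from the recursion above, giving $\|x_k-x^*\|\le B_0+\|x^*\|$ for all $k\ge 0$. Feeding this back into the dichotomy gives $\|v_k-x^*\|\le B_0+\|x^*\|$ for all $k\ge 1$. Finally, for the inner iterates I would invoke the monotonicity already established inside the proof of Lemma~\ref{lem_rand_feas}(a): relation~\eqref{fes_eq} shows $\|z_k^i-x^*\|\le\|z_k^{i-1}-x^*\|$, and since $z_k^0=v_k$ we get $\|z_k^i-x^*\|\le\|v_k-x^*\|\le B_0+\|x^*\|$ for $i=1,\dots,N_k-1$. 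Everything but the sign dichotomy is routine bookkeeping, and since Lemmas~\ref{lem-viter} and~\ref{lem_rand_feas}(a) both hold surely, so do these bounds.
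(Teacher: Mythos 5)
Your proposal is correct and follows essentially the same route as the paper's proof: induction on $k$, the bound from Lemma~\ref{lem-viter} (the paper invokes it via Lemma~\ref{lem-elem-grad}) with $y=x^*$ and $\alpha_k\le 1/L$, the dichotomy on the sign of $f(v_{k+1})-f(x^*)$ using boundedness of the level set, the non-expansiveness $\|x_{k+1}-x^*\|\le\|v_{k+1}-x^*\|$ from Lemma~\ref{lem_rand_feas}(a), and the inner-iterate monotonicity $\|z_k^i-x^*\|\le\|z_k^{i-1}-x^*\|$ to cover $\{z_k^i\}$. No gaps.
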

\begin{proof}
From the analysis of the first relation of Lemma~\ref{lem_rand_feas}, for any $\beta \in (0,2)$, we obtain surely for all $x \in X$ and all $k\ge1$,
    \begin{align}\label{eq-z_itr1} 
        \|z_{k}^i - x\| \le  \|z_{k}^{i-1} - x\| \quad \hbox{for } i=1,\ldots,N_k,    
    \end{align}
\begin{align}\label{z_itr1} 
    \|x_k-x\|\le \|v_k-x\|.
\end{align}
By the definition of the method, we have that $x_k=z_{k}^{N_k}$ and $v_k=z_{k}^{0} $ (see Algorithm~\ref{algo_rand_feas}). In view of these relations and the inequalities in relation~\eqref{eq-z_itr1}, we can see that it suffices to show that the sequences $\{\|x_k-x\|\}$ and $\{\|v_k-x\|\}$ are surely bounded for some $x\in X$. Let $x^*$ be the unique solution to problem~\eqref{prob_eq}, which exists due to the strong convexity of $f(\cdot)$. We will prove that $\{\|x_k-x^*\|\}$ and $\{\|v_k-x^*\|\}$ are surely bounded using the mathematical induction on the iterate index $k$. To facilitate the mathematical induction, we will use Lemma~\ref{lem-viter} and relation~\eqref{z_itr1}. 

Since $f(\cdot)$ is strongly convex, the (lower) level sets of $f(\cdot)$ are bounded. Specifically, the set
    \[{\cal \bar L}(x^*)=\{x\in\re^n\mid f(x)\le f(x^*)\}\] is bounded.
    Let $B_0>0$ be such that 
     \begin{equation}\label{eq-bconst}
     \|x\|\le B_0\qquad\hbox{for all }x\in {\cal \bar L}(x^*).\end{equation}
    Also, let $B_0\ge M_0$. Then, 
    $\|x_0-x^*\|\le M_0+\|x^*\|\le B_0+\|x^*\|$. Hence,
    for $k=0$ we surely have $\|x_0-x^*\|\le B_0+\|x^*\|$.
    
    Assume now that for some $k\ge 1$, the iterates $x_k$ and $v_k$ are surely within the ball centered at $x^*$ with the radius $B_0+\|x^*\|$, i.e.,
    \[\|x_k-x^*\|\le B_0+\|x^*\|,\qquad 
    \|v_k-x^*\|\le B_0+\|x^*\|.\]
    We consider the iterates $v_{k+1}$ and $x_{k+1}$.
    By Lemma~\ref{lem-elem-grad}, where we use 
    $Z=Y$, $w=v_{k+1}$, $x=x_k$, $\a=\a_k$, and $z=x^*\in X \cap Y $ and the fact that $1-\a_k L\ge0$, we surely have that 
    \begin{equation}\label{eq-vkbound}
    \|v_{k+1} - x^*\|^2 \leq (1-\a_k \mu) \| x_{k} -x^*\|^2  - 2 \a_k (f(v_{k+1}) - f(x^*)).
     \end{equation}
    The vector $v_{k+1}$ is random and we either have 
    $f(v_{k+1})> f(x^*)$ or $f(v_{k+1})\le f(x^*)$ (with corresponding probabilities).
    If $f(v_{k+1})> f(x^*)$, then 
    from relation~\eqref{eq-vkbound} we see that 
    \[\|v_{k+1} - x^*\|^2 <(1-\a_k \mu) \| x_{k} -x^*\|^2 \le \|x_k-x^*\|^2.\]
    Thus, by the inductive hypothesis, it follows that
    $\|v_{k+1}-x^*\|< B_0+\|x^*\|$. 
    On the other hand, 
    if $f(v_{k+1})\le f(x^*$), then $v_{k+1}\in {\cal \bar L}(x^*)$, implying by the boundedness of the level set ${\cal \bar L}(x^*)$ (see relation~\eqref{eq-bconst})  that 
    $\|v_{k+1}\|\le B_0$. Thus, $\|v_{k+1}-x^*\|\le B_0+\|x^*\|$.
    Hence, in any case,  we surely have that
    \begin{equation}\label{eq-vkp1}
    \|v_{k+1} - x^*\|\le B_0+\|x^*\|.
    \end{equation}
    For the iterate $x_{k+1}$, by relation~\eqref{z_itr1}, where $x=x^*\in X$, we have that 
    \[\|x_{k+1}-x^*\|\le \|v_{k+1}-x^*\|,\]
    thus implying by relation~\eqref{eq-vkp1} that surely $\|x_{k+1}-x^*\|\le B_0+\|x^*\|$.
    Therefore, we surely  have $\|v_k-x^*\|\le B_0+\|x^*\|$ for all $k\ge1$ and  $\|x_k-x^*\|\le B_0+\|x^*\|$ for all $k\ge0$. %implying that for all $k$,
    %\[\|v_k\|\le B_0+2\|x^*\|, \qquad 
    %\|x_k\|\le B_0+2\|x^*\|.\]
    By relation~\eqref{eq-z_itr1}, it follows that for all $k\ge1$, we also surely have
    $\|z_k^i-x^*\|\le B_0+\|x^*\|$ for all $i=1,\ldots, N_k$.
\end{proof}

%%%%%%%%%%%%%%%%%%%%%%%%%%%%%%%%%%%%%%%%%%%%%%%%

As a consequence of Lemma~\ref{lem-bounded-iterates}, we can state the next corollary.

\begin{corollary}\label{cor_subgrad_bound}
Under the assumptions of Lemma~\ref{lem-bounded-iterates},
the subgradients $d_k^i$ of Algorithm~\ref{algo_rand_feas} are surely bounded. Also, the gradients $\nabla f(x_k)$ along the iterates $x_k$ generated by Algorithm~\ref{algo_grad_descent} are surely bounded, i.e., there exist (deterministic) scalars $M_g>0$ and $M_f>0$ such that $\| d_k^i \| \leq M_g$ surely for all $i=1,\ldots,N_k$ and $k\ge 1$, and $\|\nabla f(x_k) \| \leq M_f$ surely for all $k\ge0$.
\end{corollary}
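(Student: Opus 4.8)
The plan is to obtain both bounds as immediate consequences of Lemma~\ref{lem-bounded-iterates} combined with the classical fact that a real-valued convex function on $\mathbb{R}^n$ has uniformly bounded subgradients on every bounded set (\cite{bertsekas2003convex}[Proposition~4.2.3]). Concretely, I would fix the closed ball $K := \{x\in\mathbb{R}^n \mid \|x-x^*\|\le B_0+\|x^*\|\}$, whose radius is deterministic because $B_0$, $x^*$, $L$, $\mu$, and $M_0$ are all deterministic. Lemma~\ref{lem-bounded-iterates} states precisely that, surely, $x_k\in K$ for all $k\ge0$, $v_k\in K$ for all $k\ge1$, and $z_k^i\in K$ for all $i=0,1,\ldots,N_k$ and $k\ge1$ (recall $z_k^0=v_k$ and $z_k^{N_k}=x_k$). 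Hence every point at which Algorithms~\ref{algo_grad_descent} and~\ref{algo_rand_feas} evaluate a (sub)gradient lies in $K$ with certainty.

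For the objective gradients, I would use Assumption~\ref{asum_lipschitz} together with the convexity of $Y$: the descent-lemma form of Lipschitz smoothness yields $\|\nabla f(x)-\nabla f(x^*)\|\le L\|x-x^*\|$ for all $x\in Y$, so for every iterate $x_k\in Y\cap K$,
\[
\|\nabla f(x_k)\|\le \|\nabla f(x^*)\|+L\|x_k-x^*\|\le \|\nabla f(x^*)\|+L\,(B_0+\|x^*\|) =: M_f ,
\]
a deterministic constant, and this bound holds surely for all $k\ge0$. (Alternatively, one may simply apply \cite{bertsekas2003convex}[Proposition~4.2.3] to the convex function $f$, viewing $\nabla f(x)$ as its unique subgradient at $x$, to get boundedness of $\nabla f$ over the bounded set $K$.)

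For the constraint subgradients, I would note that each $g_i^+=\max\{0,g_i\}$ is convex and finite-valued on $\mathbb{R}^n$ under Assumption~\ref{asum_set1}, hence by \cite{bertsekas2003convex}[Proposition~4.2.3] the set $\bigcup_{x\in K}\partial g_i^+(x)$ is bounded; writing $M_g^{(i)}:=\sup\{\|d\|\mid d\in\partial g_i^+(x),\ x\in K\}<\infty$ and $M_g:=\max_{1\le i\le m}M_g^{(i)}$, the latter is finite as a maximum of finitely many finite numbers and is deterministic. Since in Algorithm~\ref{algo_rand_feas} the vector $d_k^i$ is a subgradient of $g_{\omega_k^i}^+$ at $z_k^{i-1}\in K$, it follows that $\|d_k^i\|\le M_g$ surely for all $i=1,\ldots,N_k$ and all $k\ge1$ — and, importantly, this conclusion is insensitive to whether $N_k$ is deterministic or random, because $K$ and the numbers $M_g^{(i)}$ do not depend on $N_k$.

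There is no genuine obstacle here; the only care required is bookkeeping — checking that $M_f$ and $M_g$ are truly deterministic (they are built only from $x^*$, $L$, $B_0$, and the fixed functions $g_i$, none of which are random) and that the ``surely'' qualifier propagates through the argument, including over the possibly random index $N_k$. The proof is essentially a one-line appeal to Lemma~\ref{lem-bounded-iterates} plus Proposition~4.2.3 of \cite{bertsekas2003convex}.
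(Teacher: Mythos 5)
Your proposal is correct and follows essentially the same route as the paper: Lemma~\ref{lem-bounded-iterates} places all evaluation points in a fixed deterministic ball, and \cite{bertsekas2003convex}[Proposition~4.2.3] then gives uniform subgradient bounds there (with the maximum over the finitely many $g_i^+$), independently of whether $N_k$ is random. The only minor caveat is that Assumption~\ref{asum_lipschitz} is stated in descent-lemma form, so the inequality $\|\nabla f(x)-\nabla f(x^*)\|\le L\|x-x^*\|$ over $Y$ is not literally immediate from it; your fallback of applying Proposition~4.2.3 to $f$ itself is exactly what the paper does and closes this gap.
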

This subgradient boundedness follows from by \cite{bertsekas2003convex}[Proposition~4.2.3]
and the fact that 
the sequence $\{z_k^i, i=1\ldots, N_k, \, k\ge 1\}$
is surely bounded. Lemma~\ref{lem-bounded-iterates} still holds when $\mu = 0$, i.e., $f$ is merely convex. But for this case, we need to assume that $f$ has bounded sublevel set, and the solution set is nonempty and apply Lemma~\ref{lem-bounded-iterates} with an arbitrary solution $x^*$. This intuition for bounding the sublevel set will be used later when analyzing the T-DoWS algorithm (cf. Algorithm~\ref{algo_tdows_feas}) and proving the boundedness of its iterates.

%%%%%%%%%%%%%%%%%%%%%%%%%%%%%%%%%%%%%%%%%%%%%%%%%%%%%%%%%%

\subsection{Auxiliary results on the function decay and the infeasibility gap}\label{thm_sc_conv_proof}

\begin{theorem}\label{thm_sc_conv}
    Under Assumptions~\ref{asum_set1}, \ref{asum_err_bd}, \ref{asum_lipschitz}, and~\ref{asum_strong_convex}, and using a constant step size $\a_k = \a \in (0, \frac{1}{L}]$ in Algorithm~\ref{algo_grad_descent}, the following exponentially decaying upper bound holds for 
    the weighted iterate $\bar v_k = \frac{\a \mu}{1-(1-\a \mu)^k} \sum_{t=2}^{k+1} (1-\a \mu)^{k+1-t} v_t$, 
    %the following exponential convergence rate is obtained in terms of the function values for all $k \geq 1$ surely
    \begin{align}
        f(\bar v_k)-f(x^*) \leq \mu (1-\a \mu)^k \frac{\|v_1 - x^*\|^2}{2(1-(1-\a \mu)^k)} 
        \quad\hbox{surely for all $k\ge 1$}. \nonumber
    \end{align}
    The iterates $x_k$ obtained by Algorithm~\ref{algo_rand_feas} to reduce the functional infeasibility gap satisfy
    \begin{align}
        \EXP{f(x_{k}) - f(x^*) \mid \cF_{k-1} \cup \{N_k\}} \geq - M_f (1-q)^{\frac{N_k}{2}} (B_0 + \|x^*\|) \qquad \text{almost surely for all $k\ge 1$}, \nonumber
    \end{align}
    while the weighted iterate $\bar x_k = \frac{\a \mu}{1-(1-\a \mu)^{k+1}} \sum_{t=1}^{k+1} (1-\a \mu)^{k+1-t} x_t$ satisfies
    \begin{align}
        \EXP{f(\bar x_k)-f(x^*)} \geq - M_f \max_{1 \leq t \leq k+1} \EXP{(1-q)^{\frac{N_{t}}{2}}} (B_0 + \|x^*\|) \qquad \text{for all $k\ge 1$}.\nonumber
    \end{align}
    %where the averaged iterate $\bar x_k = \frac{\a \mu}{1-(1-\a \mu)^k} \sum_{t=2}^{k+1} (1-\a \mu)^{k+1-t} x_t$.
\end{theorem}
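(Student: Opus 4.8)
The plan is to establish the three assertions of Theorem~\ref{thm_sc_conv} separately, all resting on Lemma~\ref{lem-viter}, Lemma~\ref{lem_rand_feas}, Lemma~\ref{lem-scalar-seq}, Lemma~\ref{lem-bounded-iterates}, and Corollary~\ref{cor_subgrad_bound}; throughout, $x^*$ is the unique solution of~\eqref{prob_eq}, which lies in $X\cap Y$. \textbf{Geometric upper bound for $\bar v_k$.} First I would apply Lemma~\ref{lem-viter} with $y=x^*$. Since $\a_k=\a\le 1/L$, the term $-(1-\a L)\|v_{k+1}-x_k\|^2$ is nonpositive and can be dropped; since $x^*\in X\cap Y$, Lemma~\ref{lem_rand_feas}(a) gives $\|x_k-x^*\|^2\le\|v_k-x^*\|^2$ surely. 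Substituting both yields the scalar recursion
\[
\|v_{k+1}-x^*\|^2+2\a\bigl(f(v_{k+1})-f(x^*)\bigr)\le(1-\a\mu)\,\|v_k-x^*\|^2 .
\]
Applying Lemma~\ref{lem-scalar-seq} with $r_k=\|v_k-x^*\|^2$, $s_{k+1}=2\a(f(v_{k+1})-f(x^*))$, $p=1-\a\mu$, and then discarding the nonnegative term $\|v_{k+1}-x^*\|^2$, gives $2\a\sum_{t=2}^{k+1}(1-\a\mu)^{k+1-t}(f(v_t)-f(x^*))\le(1-\a\mu)^k\|v_1-x^*\|^2$. Dividing by $\sum_{t=2}^{k+1}(1-\a\mu)^{k+1-t}=\frac{1-(1-\a\mu)^k}{\a\mu}$ and invoking Jensen's inequality for the convex function $f$ over the resulting unit-sum weights produces exactly the claimed bound for $f(\bar v_k)-f(x^*)$.

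\textbf{Conditional lower bound for $f(x_k)-f(x^*)$.} Set $\hat x_k:=\Pi_{X\cap Y}[x_k]\in X\cap Y$, so that $f(\hat x_k)\ge f(x^*)$; since $\Pi_{X\cap Y}[x^*]=x^*$, nonexpansiveness of the projection and Lemma~\ref{lem-bounded-iterates} give $\|\hat x_k-x^*\|\le\|x_k-x^*\|\le B_0+\|x^*\|$, so $\hat x_k$ lies in the ball over which $\|\nabla f\|\le M_f$ (the uniform bound furnished by Corollary~\ref{cor_subgrad_bound}). Convexity of $f$ then gives
\[
f(x_k)-f(x^*)\ge f(x_k)-f(\hat x_k)\ge\la\nabla f(\hat x_k),\,x_k-\hat x_k\ra\ge-M_f\,\dist(x_k,X\cap Y).
\]
The hypothesis $\a\le 1/L$ also secures, via Corollary~\ref{cor_subgrad_bound}, the bound $\|d_k^i\|\le M_g$ needed for Lemma~\ref{lem_rand_feas}(b). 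Taking $\EXP{\,\cdot\mid\widetilde\cF_{k-1}}$, with $\widetilde\cF_{k-1}=\cF_{k-1}\cup\{N_k\}$, and noting that $v_k$ is $\cF_{k-1}$-measurable while $N_k$ is $\widetilde\cF_{k-1}$-measurable, then applying Lemma~\ref{lem_rand_feas}(b) with $x=x^*$ and $\|v_k-x^*\|\le B_0+\|x^*\|$ yields $\EXP{f(x_k)-f(x^*)\mid\widetilde\cF_{k-1}}\ge-M_f(1-q)^{N_k/2}(B_0+\|x^*\|)$ a.s.

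\textbf{Lower bound for $f(\bar x_k)-f(x^*)$.} Write $w_t=\frac{\a\mu(1-\a\mu)^{k+1-t}}{1-(1-\a\mu)^{k+1}}$ for $t=1,\dots,k+1$; these are unit-sum weights with $\bar x_k=\sum_{t=1}^{k+1}w_t x_t$. The same projection argument applied to $\bar x_k$ (which stays in the bounding ball by convexity of $\|\cdot\|$, so $\|\nabla f(\Pi_{X\cap Y}[\bar x_k])\|\le M_f$), together with convexity of $z\mapsto\dist(z,X\cap Y)$, gives
\[
f(\bar x_k)-f(x^*)\ge-M_f\,\dist(\bar x_k,X\cap Y)\ge-M_f\sum_{t=1}^{k+1}w_t\,\dist(x_t,X\cap Y).
\]
Taking full expectation, using the tower property with Lemma~\ref{lem_rand_feas}(b) and $\|v_t-x^*\|\le B_0+\|x^*\|$, gives $\EXP{\dist(x_t,X\cap Y)}\le(B_0+\|x^*\|)\EXP{(1-q)^{N_t/2}}\le(B_0+\|x^*\|)\max_{1\le t\le k+1}\EXP{(1-q)^{N_t/2}}$; since $\sum_t w_t=1$, the claimed bound follows.

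\textbf{Main obstacle.} The recursion algebra and the two Jensen steps are routine. The delicate points are: (i) checking that the auxiliary projections $\hat x_k$ and $\Pi_{X\cap Y}[\bar x_k]$ remain in the region where $\|\nabla f\|\le M_f$, which uses nonexpansiveness of the projection onto $X\cap Y\ni x^*$ together with Lemma~\ref{lem-bounded-iterates}; (ii) the measurability bookkeeping that pulls $\|v_k-x^*\|$ and $(1-q)^{N_k/2}$ out of the conditional expectation in Lemma~\ref{lem_rand_feas}(b) --- precisely why one conditions on $\widetilde\cF_{k-1}=\cF_{k-1}\cup\{N_k\}$; and (iii) for the averaged-iterate bound, transporting the geometric feasibility decay through the averaging via convexity of $\dist(\cdot,X\cap Y)$.
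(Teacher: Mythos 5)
Your proposal is correct and follows essentially the same route as the paper: Lemma~\ref{lem-viter} with $y=x^*$ plus Lemma~\ref{lem_rand_feas}(a) to get the scalar recursion, Lemma~\ref{lem-scalar-seq} and Jensen for the geometric upper bound, and the projection-onto-$X\cap Y$ argument with Lemma~\ref{lem_rand_feas}(b), Lemma~\ref{lem-bounded-iterates}, and Corollary~\ref{cor_subgrad_bound} for the two lower bounds. The only (harmless) deviations are that you drop the $(1-\a L)\|v_{k+1}-x_k\|^2$ term before invoking Lemma~\ref{lem-scalar-seq} rather than after, and you justify $\|\nabla f(\Pi_{X\cap Y}[x_k])\|\le M_f$ explicitly via nonexpansiveness of the projection, a detail the paper leaves implicit.
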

\begin{proof}
    We use $y = x^*$ and a fixed step size $\alpha_k = \alpha > 0$ for all $k \geq 0$ in Lemma~\ref{lem-viter} and obtain
    \[\|v_{k+1} - x^*\|^2 \leq (1-\a \mu) \| x_{k} -x^* \|^2 - (1 - \a L) \| v_{k+1} - x_{k} \|^2 - 2 \a (f(v_{k+1}) - f(x^*)).\]
Re-arranging the terms in the preceding relation, we obtain surely for all $k\ge0,$
\[ \|v_{k+1} - x^*\|^2 +2\a (f(v_{k+1}) - f(x^*)) + (1 - \a L) \| v_{k+1} - x_{k} \|^2
\leq (1-\a \mu) \| x_k -x^*\|^2.\]
By Lemma~\ref{lem_rand_feas}, with $x=x^*$, we surely have for all $k\ge1,$
\[\|x_k-x^*\|^2\le \|v_k-x^*\|^2.\]
By combining the preceding two relations, we find that surely for all $k\ge 1$,
\[ \|v_{k+1} - x^*\|^2 +2\a (f(v_{k+1}) - f(x^*)) + (1 - \a L) \| v_{k+1} - x_{k} \|^2
\leq (1-\a \mu) \| v_k -x^*\|^2.\]
We note that Lemma~\ref{lem-scalar-seq} holds with 
$p=1-\a \mu\ge 0$, since $\a L\le 1$ and $\mu\le L$,
and
\[r_k=\|v_k - x^*\|^2,\quad s_k=2\a (f(v_k) - f(x^*)) + (1-\alpha L) \|v_{k} -x_{k-1} \|^2 \quad\hbox{for all }k\ge1.\]
Thus, by  Lemma~\ref{lem-scalar-seq} we have
\[\|v_{k+1} - x^*\|^2 + 2\a \sum_{t=2}^{k+1} p^{k+1-t} \left(f(v_t) - f(x^*)\right) + (1-\a L) \sum_{t=2}^{k+1} p^{k+1-t} \|v_t - x_{t-1}\|^2 \le p^k \|v_1 - x^*\|^2.\]
Defining
\[\bar v_k=\frac{1}{S_k}\sum_{t=2}^{k+1} p^{k+1-t} v_t \quad\hbox{and} \quad \bar x_{k-1}=\frac{1}{S_k}\sum_{t=2}^{k+1} p^{k+1-t} x_{t-1}
\quad\hbox{with}\quad 
S_k=\sum_{t=2}^{k+1} p^{k+1-t} = \frac{1-p^{k}}{\a \mu}\]
for all $k \geq 1$ and using the convexity of $f(\cdot)$ and the norm, we find that 
\[\|v_{k+1} - x^*\|^2 + 2\a S_k \left(f(\bar v_k)-f(x^*)\right) + (1-\a L) S_k \|\bar v_k - \bar x_{k-1}\|^2 \le p^k \|v_1 - x^*\|^2.\]
Dropping the non-negative terms on the left hand side of the preceding relation, we obtain the first relation of the theorem.

In order to derive the second relation of the theorem, we note that
\begin{align}
    f(x_k) - f(x^*) = [f(x_k) - f(\Pi_{X \cap Y} [x_k])] + [f(\Pi_{X \cap Y} [x_k]) - f(x^*)] . \nonumber
\end{align}
The quantity $f(\Pi_{X \cap Y} [x_k]) - f(x^*)$ is always non-negative and can be dropped while estimating the lower bound of the preceding relation. Hence, we obtain
\begin{align}
    f(x_k) - f(x^*) \geq f(x_k) - f(\Pi_{X \cap Y} [x_k]). \nonumber
\end{align}
Application of convexity, Cauchy-Schwarz inequality, and then boundedness of the gradients by the constant $M_f$, we obtain
\begin{align}
    f(x_k) - f(x^*) &\geq \la \nabla f(\Pi_{X \cap Y} [x_k]), x_k - \Pi_{X \cap Y} [x_k] \ra \nonumber\\
    & \geq - \|\nabla f(\Pi_{X \cap Y} [x_k])\| \dist(x_k,X \cap Y) \nonumber\\
    & \geq - M_f \dist(x_k,X \cap Y) . \nonumber
\end{align}
Taking conditional expectation on both sides of the preceding relation given the sigma algebra $\cF_{k-1}$ and the sample size $N_k$, we obtain almost surely for all $k \geq 1$
\begin{align}
    \EXP{f(x_k) - f(x^*) \mid \cF_{k-1} \cup \{N_k\}} & \geq - M_f \EXP{\dist(x_k,X \cap Y) \mid \cF_{k-1} \cup \{N_k\}} \nonumber\\
    & \geq - M_f (1-q)^{\frac{N_k}{2}} (B_0 + \|x^*\|) , \nonumber
\end{align}
where the final inequality is obtained using the last relation of Lemma~\ref{lem_rand_feas} for $x = x^*$ and Lemma~\ref{lem-bounded-iterates}.

In order to obtain the third relation of the theorem, we take the averaged iterate $\bar x_k = \frac{1}{S_{k+1}}\sum_{t=1}^{k+1} p^{k+1-t} x_t$ and take total expectation on the function values yielding
\begin{align}
    \EXP{f(\bar x_k)-f(x^*)} \geq - M_f \EXP{\dist(\bar x_k, X \cap Y)} . \label{exp_lb} 
\end{align}
%For this case, we note that the number of samples $N_k \in \mathcal{I}_k$ is a random variable drawn from some distribution $\mathcal{P}_{k}$ for all $k \geq 1$. Then, taking total expectation on the last relation of Lemma~\ref{lem_rand_feas} and then using the boundedness of the iterates from Lemma~\ref{lem-bounded-iterates}, we obtain
Taking total expectation on the last relation of Lemma~\ref{lem_rand_feas} and applying boundedness of the iterates from Lemma~\ref{lem-bounded-iterates}, we obtain
\begin{align}
    \EXP{\dist(x_{k},X \cap Y)} \leq \EXP{(1-q)^{\frac{N_{k}}{2}}} (B_0 + \|x^*\|) \qquad \text{for all } k \geq 1 . \label{total_exp_rel}
\end{align}
Next, we have
\begin{align}
    S_{k+1} \EXP{\dist(\bar x_k,X \cap Y)} &= S_{k+1} \EXP{\| \bar x_k - \Pi_{X \cap Y}[\bar x_k] \|} \nonumber\\
    &\leq S_{k+1} \EXP{\left\| \frac{\sum_{t=1}^{k+1} p^{k+1-t} x_t - \sum_{t=1}^{k+1} p^{k+1-t} \Pi_{X \cap Y}[x_t]}{S_{k+1}} \right\|} \nonumber\\
    &\leq \sum_{t=1}^{k+1} p^{k+1-t} \EXP{\|x_t - \Pi_{X \cap Y}[x_t]\|} \nonumber\\
    & \leq \sum_{t=1}^{k+1} p^{k+1-t} \EXP{(1-q)^{\frac{N_{t}}{2}}}  (B_0 + \|x^*\|) \nonumber\\
    & = \max_{1 \leq t \leq k+1} \EXP{(1-q)^{\frac{N_{t}}{2}}} (B_0 + \|x^*\|) S_{k+1} . \nonumber
\end{align}
The first inequality of the preceding relation comes from the definition of $\bar x_k$ and using the fact that the projection of a point has the least distance compared to average of other points. The second inequality follows from Jensen's inequality, whereas the third inequality comes from substituting relation~\eqref{total_exp_rel}. Simplifying the preceding relation, we obtain
\begin{align}
    \EXP{\dist(\bar x_k,X \cap Y)} \leq \max_{1 \leq t \leq k+1} \EXP{(1-q)^{\frac{N_{t}}{2}}} (B_0 + \|x^*\|) . \nonumber
\end{align}
Using the preceding relation back in equation~\eqref{exp_lb} yields the third relation of the theorem.
\end{proof}

%%%%%%%%%%%%%%%%%%%%%%%%%%%%%%%%%%%%%%%%%%%%%%%%%%%%%%%%%%%%%%

\subsection{Proof of Theorem~\ref{thm_epsilon_tolerance}} \label{epsilon_tolerance_proof}
\begin{proof}
    We start our analysis from the relation of Lemma~\ref{lem-viter} and add and subtract $2 \a_k f(x_k)$ to obtain surely
    \[
    \|v_{k+1} - y\|^2 \leq (1-\a_k \mu) \| x_{k} -y \|^2 - (1 - \a_k L) \| v_{k+1} - x_{k} \|^2 - 2 \a_k  (f(x_k) - f(y)) - 2 \a_k (f(v_{k+1}) - f(x_k)) . \]
    The last quantity on the right hand side of the preceding inequality can be upper estimated using strong convexity, and then Cauchy-Schwarz inequality followed by Young's inequality as follows
    \begin{align}
        2 \a_k (f(x_k) - f(v_{k+1})) &\leq \la 2\a_k \nabla f(x_k), x_k - v_{k+1} \ra - {\a_k \mu} \| v_{k+1} - x_k \|^2 \nonumber\\
        & \leq {2 \a_k^2} \| \nabla f(x_k) \|^2 + \frac{1}{2} \|v_{k+1} - x_k \|^2 - {\a_k \mu} \| v_{k+1} - x_k \|^2 . \nonumber
    \end{align}
    Using the preceding relation back to the main first expression, we obtain
    \begin{align}
        \|v_{k+1} - y\|^2 \leq (1-\a_k \mu) \| x_{k} -y \|^2 - \left(\frac{1}{2} - \a_k (L - \mu) \right) \| v_{k+1} - x_{k} \|^2 - 2 \a_k  (f(x_k) - f(y)) + {2 \a_k^2} \| \nabla f(x_k) \|^2 . \nonumber
    \end{align}
    With the selection of step size $\alpha_k \leq \min\left\{ \frac{1}{2(L-\mu)}, \frac{1}{\mu}, \frac{1}{L} \right\} = \min\left\{ \frac{1}{2(L-\mu)}, \frac{1}{L} \right\}$, the second quantity on the right hand side of the preceding relation can be dropped. Hence, we obtain
    \begin{align}
        \|v_{k+1} - y\|^2 \leq (1-\a_k \mu) \| x_{k} -y \|^2 - 2 \a_k  (f(x_k) - f(y)) + {2 \a_k^2} \|\nabla f(x_k) \|^2 . \label{exp_conv_eq1}
    \end{align}
    We want to select $\a_k$ such that $2 \a_k \|\nabla f(x_k) \|^2 = \epsilon$, which yields $\a_k = \frac{\epsilon}{2 \|\nabla f(x_k)\|^2}$. By the consequence of Lemma~\ref{lem-bounded-iterates}, we surely have $\|\nabla f(x_k)\| \leq M_f$. Hence, with $\a_k = \min\left\{ \frac{1}{2(L-\mu)}, \frac{1}{L}, \frac{\epsilon}{2 \|\nabla f(x_k)\|^2} \right\}$ and $\bar \a = \min\left\{ \frac{1}{2(L-\mu)}, \frac{1}{L}, \frac{\epsilon}{2 M_f^2} \right\}$, we can obtain $\a_k \geq \bar \a$ for all $k \geq 0$. Using the fact that $1- \a_k \mu \leq 1- \bar \a \mu$, relation~\eqref{exp_conv_eq1} simplifies to
    \begin{align*}
        \|v_{k+1} - y\|^2 \leq (1- \bar \a \mu) \| x_{k} -y \|^2 - 2 \a_k  (f(x_k) - f(y)) + \epsilon \a_k .
    \end{align*}
    Next, we use $y = x^* \in X \cap Y$ which is the optimal point of the problem. By Lemma~\ref{lem_rand_feas}, we have surely $\|x_k-x^*\|^2\le \|v_k-x^*\|^2$. Using the equation in the preceding relation, we obtain
    \begin{align*}
        \|v_{k+1} - x^*\|^2 + 2 \a_k  (f(x_k) - f(x^*)) - \epsilon \a_k \leq (1- \bar \a \mu) \| v_{k} - x^* \|^2 .
    \end{align*}
    Next, the proof follows along the similar lines of Theorem~\ref{thm_sc_conv}. Denote
    \begin{align*}
        p = 1- \bar \a \mu, \quad r_k=\|v_k - x^*\|^2,\quad s_k=2\a_{k-1} (f(x_{k-1}) - f(x^*)) - \epsilon \a_{k-1} \quad\hbox{for all }k\ge1.
    \end{align*}
    Now defining
    \begin{align*}
        \bar x_{k}=\frac{1}{S_k}\sum_{t=2}^{k+1} p^{k+1-t} \a_{t-1} x_{t-1} = \frac{1}{S_k}\sum_{t=1}^{k} p^{k-t} \a_{t} x_{t} \quad\hbox{with}\quad S_k = \sum_{t=2}^{k+1} \a_{t-1} p^{k+1-t} = \sum_{t=1}^{k} \a_{t} p^{k-t} ,
    \end{align*}
    and following Lemma~\ref{lem-scalar-seq} and using convexity of the function $f$, we finally obtain surely
    \begin{align*}
        \|v_{k+1} - x^*\|^2 + 2 S_k \left(f(\bar x_{k})-f(x^*)\right) - \epsilon S_k \leq p^k \|v_1 - x^* \|^2 . \nonumber
    \end{align*}
    Dropping the first non-negative term on the left hand side of the preceding relation, and simplifying things, we obtain surely
    \begin{align}
        f(\bar x_{k})-f(x^*) \leq \frac{p^k}{2 S_k} \|v_1 - x^* \|^2 + \frac{\epsilon}{2} . \label{exp_conv_eq2}
    \end{align}
    Since $\a_k \geq \bar \a$ for all $k \geq 1$, then
    \begin{align*}
        S_k = \sum_{t=1}^{k} \a_{t} p^{k-t} \geq \bar \a \sum_{t=1}^{k} p^{k-t} = \bar \a \frac{1-p^{k}}{\bar \a \mu} = \frac{1-p^{k}}{\mu} .
    \end{align*}
    Hence using the preceding relation in relation~\eqref{exp_conv_eq2} and using the bound of Lemma~\ref{lem-bounded-iterates} for the first term on the right hand side of relation~\eqref{exp_conv_eq2}, we obtain surely
    \begin{align*}
        f(\bar x_{k})-f(x^*) \leq \frac{\mu p^k}{2 (1-p^k)} (B_0+\|x^*\|)^2 + \frac{\epsilon}{2} . 
    \end{align*}
    Now to stay within the $\epsilon$ radius tolerance for the upper bound, i.e., $f(\bar x_{k})-f(x^*) \leq \epsilon$ surely, we want
    \begin{align*}
        \frac{\mu p^k}{2 (1-p^k)} (B_0+\|x^*\|)^2 \leq \frac{\epsilon}{2}.
    \end{align*}
    Using $p = 1- \bar \a \mu$, the preceding relation upon simplification yields the minimum number of iterations as
    \begin{align*}
        k \geq \frac{\ln\left( \frac{\mu (B_0 + \|x^*\|)^2}{\epsilon} + 1 \right)}{\ln\left( \frac{1}{1- \bar \a \mu} \right)} .
    \end{align*}
    Next, we need to find the lower bound for $\EXP{f(\bar x_{k})-f(x^*)}$ and the associated number of samples $N_k$ in order to achieve $\epsilon$ accuracy. Following the similar lines of proof as Theorem~\ref{thm_sc_conv}, we see
    \begin{align*}
        \EXP{f(\bar x_{k})-f(x^*)} \geq -M_f \EXP{\dist(\bar x_{k}, X \cap Y)} \geq - (B_0 + \|x^*\|) M_f \max_{1 \leq t \leq k} \EXP{(1-q)^{\frac{N_t}{2}}} .
    \end{align*}
    We would require 
    \begin{align*}
        - (B_0 + \|x^*\|) \max_{1 \leq t \leq k} \EXP{(1-q)^{\frac{N_t}{2}}} \geq -\epsilon ,
    \end{align*}
    which finally yields a relation that depends on the number of samples as follows
    \begin{align*}
        \EXP{(1-q)^{\frac{N_t}{2}}} \leq \frac{\epsilon}{(B_0 + \|x^*\|) M_f} \qquad \text{for all $1 \leq t \leq k$.}
    \end{align*}
    This completes the proof.
\end{proof}
%%%%%%%%%%%%%%%%%%%%%%%%%%%%%%%%%
\paragraph{Comments on Theorem~\ref{thm_epsilon_tolerance}:}
Note that the number of samples $N_k$ used in Theorem~\ref{thm_epsilon_tolerance} can be deterministic (constant or growing with a specific schedule) or random depending on the support $\mathcal{I}_k$ and distribution $\mathcal{D}_k$. Both $\mathcal{D}_k$ and $\mathcal{I}_k$ may vary with $k$, for example by sampling from a uniform or Poisson distribution, or by fixing the distribution across all $k \geq 0$. The computation of $\mathbb{E}[(1-q)^{N_k/2}]$ for specific cases is provided in Appendix~\ref{exp_calc_with_dist}. Theorem~\ref{thm_epsilon_tolerance} establishes geometric convergence of the expected objective function values to an $\epsilon$ neighborhood, with an adaptive step size and dependence on the number of samples $N_k$. If $N_k$ is deterministic with $N_k \geq N$, then Theorem~\ref{thm_epsilon_tolerance} implies that the minimum number $N$ of samples needed to reach an $\epsilon$-tolerance is bounded below by $N \geq \frac{2 \ln \left( \frac{(B_0 + \|x^*\|) M_f}{\epsilon} \right)}{\ln \left( \frac{1}{1-q} \right)}$. On a bounded set, these constants can be estimated, and the constant $c$ appearing in $q$ can be estimated using arguments similar to \cite{bertsekas1999note}, which we presented in Appendix~\ref{app_dul_mul_c_connec}, provided a Slater point is known. If $N_k$ is stochastic, for example $N_k \sim \text{Poisson}(\lambda_k)$, a corresponding lower bound on the parameter $\lambda_k$ can be derived similarly for all $k \geq 1$.

%%%%%%%%%%%%%%%%%%%%%%%%%%%%%%%%%%%%%%%%%%%%%%%%%%%%%%%%%%%%%%%%%%%%%%%%%%%%%%

\subsection{Evaluation of $\EXP{(1-q)^{\frac{N_{k}}{2}}}$ for different distributions}\label{exp_calc_with_dist}

The number of samples $N_k \in \mathcal{I}_k\subset\mathbb{N}$ is considered a random variable, which is sampled from the distribution $\mathcal{D}_k$ that changes over time. For example, a simple case can be choosing a deterministic $N_k$ in the Randomized Feasibility updates (Algorithm~\ref{algo_rand_feas}). Then, the quantity
\begin{align}
    \EXP{(1-q)^{\frac{N_{k}}{2}}} = (1-q)^{\frac{N_k}{2}} , \nonumber
\end{align}
which converges geometrically fast with respect to the number of samples $N_k$, which we can grow with time $k$ (say in the order of $N+\log(k)$ or keep it constant to $N$). Choosing a large number $N$ might be computationally expensive. Alternatively, for example, we can consider $N_k$ with a uniform sampling from the set $\mathcal{I}_k = [0,\lceil\log(k)\rceil]$, which can be efficient in terms of maintaining a low complexity for Algorithm~\ref{algo_rand_feas}).

Here, we show how to compute the quantity $\EXP{(1-q)^{\frac{N_{k}}{2}}}$ for several distributions, such as, Poisson, Uniform, and Binomial. Other distributions can also be analyzed similarly.

\subsubsection{Poisson Distribution}\label{appdx_poisson_dist}

Consider the number of samples $N_k \sim \text{Poisson}(\lambda_k)$, where $\lambda_k>0$ is its mean. Hence, the Probability Mass Function (PMF) of the distribution can be presented as
\begin{align*}
    \mathbb{P}(N_k = j) = \frac{e^{-\lambda_k} \lambda_k^j}{j!}, \quad j = 0, 1, 2, \ldots .
\end{align*}
Then, we obtain
\begin{align*}
    \EXP{(1 - q)^{N_k/2}}
    &= \sum_{j=0}^\infty (1 - q)^{j/2} \cdot \frac{e^{-\lambda_k} \lambda_k^j}{j!} \\
    &= e^{-\lambda_k} \sum_{j=0}^\infty 
    \frac{\left(\lambda_k (1 - q)^{1/2} \right)^j}{j!} \\
    &= e^{-\lambda_k} \cdot e^{\lambda_k (1 - q)^{1/2}} \\
    &= e^{\left(-\lambda_k \left(1 - (1 - q)^{1/2} \right) \right)} .
\end{align*}
Hence, it can be seen that for a Poisson distribution with parameter $\lambda_k$, the expected value decays exponentially fast, with the rate of decay depending on $\lambda_k$. It is intuitive since a larger $\lambda_k$ would imply a larger number of samples $N_k$. 
We can choose to grow $\lambda_k$ in the order of $\log(k)$ or $\log(\log(k))$.

\subsubsection{Uniform Distribution}
Consider the sample size $N_k \sim \text{Uniform}(a_k, b_k)$, where $a_k$ and $b_k$ are integers with $a_k\ge 1$ and $b_k>a_k$ and with $b_k$  growing  with $k$, for example, $b_k=\lceil a_k+\log(k)\rceil$. The PMF of such a distribution is
\begin{align}
    \mathbb{P}(N_k = j) = \frac{1}{ b_k - a_k + 1}, \quad j = a_k, \dots, b_k . \nonumber
\end{align}
Then, using this PMF, we obtain
\begin{align*}
    \EXP{(1 - q)^{N_k/2}} &= \frac{1}{b_k - a_k + 1} \sum_{j = a_k}^{b_k} (1 - q)^{j/2} \\
    & \leq \frac{1}{b_k - a_k + 1} \int_{a_k-1}^{b_k} \left(\sqrt{1 - q}\right)^{j} dj \\
    & = \frac{1}{b_k - a_k + 1} \frac{\left(\sqrt{1 - q}\right)^{b_k} - \left(\sqrt{1 - q}\right)^{a_k-1}}{\ln\left(\sqrt{1 - q}\right)} \\
    &= \frac{2}{b_k - a_k + 1} \frac{\left(\sqrt{1 - q}\right)^{a_k-1} - \left(\sqrt{1 - q}\right)^{b_k}}{\ln\left(\frac{1}{1 - q}\right)}.
\end{align*}
Now, if $a_k = 1$ and $b_k = \lceil h(k)\rceil$, where $h(\cdot)$ is some monotonically increasing function, then we see $\EXP{(1 - q)^{N_k/2}} \leq O \left(\frac{1}{h(k)} \right)$. The function $h(\cdot)$ is user specified and can be $\log(\cdot)$ or $\sqrt{(\cdot)}$, etc. This example shows that uniform sampling might destroy the geometric convergence of $\EXP{(1-q)^{\frac{N_{k}}{2}}}$ in terms of the sample size $N_k$.

\subsubsection{Binomial Distribution}\label{appdx_bin_dist}
Here we consider the random variable $N_k \sim \text{Binomial}(n_k, p)$, with the PMF given by
\begin{align}
    P(N_k = j) = \binom{n_k}{j} p^j (1 - p)^{n - j}, \quad j = 0, \dots, n_k . \nonumber
\end{align}
Thus, we can compute the expectation of $(1-q)^{N_k/2}$ as
follows
\begin{align*}
    \EXP{(1-q)^{N_k/2}} &= \sum_{j=0}^{n_k} (1 - q)^{j/2} \binom{n_k}{j} p^j (1 - p)^{n_k - j} \\
    & = \sum_{j=0}^{n_k} \binom{n_k}{j} \left( p \sqrt{1 - q}\right)^j (1 - p)^{n_k - j} .
\end{align*}
Using the Binomial theorem of expansion in the preceding relation, we obtain
\begin{align*}
    \EXP{(1-q)^{N_k/2}} = \left( p \sqrt{1 - q} + 1 - p \right)^{n_k} = \left( 1 - p \left( 1 - \sqrt{1 - q} \right) \right)^{n_k} .
\end{align*}
Hence, the expectation decays geometrically fast when the number $n_k$ increases with~$k$. The number $n_k$ can be a slowly increasing function of $k$ as discussed for the uniform case.

Note that at each iteration \(k \geq 0\), the number \(N_k\) of samples  can either be fixed or drawn from any chosen probability distribution. This allows for sampling from different distributions at different iterations. Since the results in Theorems~\ref{thm_sc_conv}, \ref{thm_epsilon_tolerance} and \ref{thm_adaptive_step} involve the term \(\max_k \mathbb{E}[(1 - q)^{N_k / 2}]\), if one wants to obtain a geometric rate for this case, it is advisable to choose distributions that ensure a geometric rate for all $k$ with respect to its own parameter, for example, Poisson and Binomial distributions. Also, one can enforce sample size of \( \max \{N_k, N\} \) for some constant integer \(N\), so that the convergence rate remains geometric with respect to $N$ up to an error tolerance even in the worst case, for any choice of distribution.

%%%%%%%%%%%%%%%%%%%%%%%%%%%%%%%%%%%%%%%%%%%%%%%%%%%%%%%%%%%%%%%%%%%%%%%%%%%%%%%%%%%%%%%%%%%%%%%%%%%%%%%%%%%%%%%%%%%%%%%%%%%%%%%%%%%%%%%%%%%%%%%%%%%%%%%%%%%%%%%%%%%%%%%%%%%%%%%%%%%%%%%%%%%%%%%%%%%%%%%%%%%%%%%%%%%%%%%%%%%%%%%%%%%%%%%%%%%%%%%%%%%%%%%%%%%%%%%%%%%%%%%%%%%%%%%%%%%%%%%%%%%%%%%%%%%%%%%%%%%%%%%%%%%%%%

\section{Missing proofs of DoWS with Randomized Feasibility}

\subsection{Proof of Theorem~\ref{thm_adaptive_step}}\label{thm_adaptive_step_proof}

\begin{proof}
First, we derive an upper bound on the gap between the function value evaluated at the averaged iterate and the optimal function value, following arguments similar to \cite{khaled2023dowg}. The distance norm square of the iterate $v_{k+1}$ from the optimal point $x^*$ can be expressed as
\begin{align*}
    \| v_{k+1} - x^* \|^2
&= \left\| \Pi_Y \left[x_k - \alpha_k s_f(x_k)\right] - x^* \right\|^2 \\
&\leq \| x_k - x^* \|^2 - 2\alpha_k \langle s_f(x_k), x_k - x^* \rangle + \alpha_k^2 \| s_f(x_k) \|^2 .
\end{align*}
The preceding relation upon simplification yields
\begin{align}
    \langle s_f(x_k), x_k - x^* \rangle
\leq \frac{\| x_k - x^* \|^2 - \| v_{k+1} - x^* \|^2}{2\alpha_k} + \frac{\alpha_k}{2} \| s_f(x_k) \|^2 . \label{sub_grad_bd}
\end{align}
From Lemma~\ref{lem_rand_feas}(a) with $x = x^*$, we surely have $\| x_{k+1} - x^* \|^2 \leq \| v_{k+1} - x^* \|^2$. Moreover, the left hand side of relation~\eqref{sub_grad_bd} can be lower estimated using the convexity of the function $f$ as $\langle s_f(x_k), x_k - x^* \rangle \geq f(x_k) - f(x^*)$. Using these relations back to relation~\eqref{sub_grad_bd}, we obtain surely 
\begin{align*}
    f(x_k) - f(x^*) \leq \frac{\| x_k - x^* \|^2 - \| x_{k+1} - x^* \|^2}{2\alpha_k}
+ \frac{\alpha_k}{2} \| s_f(x_k) \|^2 .
\end{align*}
Multiplying both sides of the preceding relation with $\bar{r}_k^2$ yields surely
\[
\bar{r}_k^2 \left(f(x_k) - f(x^*)\right)
\leq \frac{\bar{r}_k^2}{2\alpha_k} \left[ \| x_k - x^* \|^2 - \| x_{k+1} - x^* \|^2 \right]
+ \frac{\alpha_k \bar{r}_k^2}{2} \| s_f(x_k) \|^2 .
\]
Denote $\hat d_k = \| x_k - x^* \|$. We take summation on both sides of the preceding relation for $k = 1$ to $T$ yielding
\begin{align}
    \sum_{k=1}^{T} \bar{r}_k^2 \left( f(x_k) - f(x^*) \right)
    \leq \frac{1}{2} \sum_{k=1}^{T} \frac{\bar{r}_k^2}{\alpha_k} \left(\hat d_k^2 - \hat d_{k+1}^2 \right) 
    + \frac{1}{2} \sum_{k=1}^{T} \alpha_k \bar{r}_k^2 \| s_f(x_k) \|^2 . \label{eq_main}
\end{align}
We will analyze individual summation terms on the right hand side of relation~\eqref{eq_main} one after the other. For the first summation term, we see that
\begin{align*}
    \sum_{k=1}^{T} \frac{\bar{r}_k^2}{\alpha_k} \left(\hat d_k^2 - \hat d_{k+1}^2 \right)
    &= \sum_{k=1}^{T} \sqrt{p_k} \left(\hat d_k^2 - \hat d_{k+1}^2 \right) \\
    &= \sqrt{p_1} \hat{d}_1^2 - \sqrt{p_T} \hat{d}_{T+1}^2 + \sum_{k=2}^{T-1} \hat{d}_k^2 (\sqrt{p_k} - \sqrt{p_{k-1}}) .
\end{align*}
Let us define $\bar d_{T+1} = \max_{1 \leq k \leq T+1} \hat d_k$. Then, the preceding relation can be upper estimated as follows:
\begin{align}
    \sum_{k=1}^{T} \frac{\bar{r}_k^2}{\alpha_k} \left(\hat d_k^2 - \hat d_{k+1}^2 \right) &\leq \sqrt{p_1} \bar d_{T+1}^2 - \sqrt{p_T} \hat{d}_{T+1}^2 + \bar d_{T+1}^2 (\sqrt{p_{T-1}} - \sqrt{p_1}) \nonumber\\
    &\leq \sqrt{p_T} (\bar d_{T+1}^2 - \hat{d}_{T+1}^2) \nonumber\\
    &= \sqrt{p_T} (\bar d_{T+1} - \hat{d}_{T+1}) (\bar d_{T+1} + \hat{d}_{T+1}) \nonumber\\
    &\leq 2 \sqrt{p_T} \bar d_{T+1} (\bar d_{T+1} - \hat{d}_{T+1}) . \label{first_term_half_simplified}
\end{align}
Next, we  upper estimate the term $(\bar d_{T+1} - \hat{d}_{T+1})$. Let us consider $i^* = \argmax_{1 \leq k \leq T+1} \hat d_k$. Then $\bar d_{T+1} = \hat d_{i^*}$. Hence, 
\begin{align*}
    \hat d_{i^*} - \hat{d}_{T+1} = \|x_{i^*} - x^*\| - \|x_{T+1} - x^*\| \leq \|x_{i^*} - x_{T+1}\| \leq \|x_{i^*} - x_0\| + \|x_{T+1} - x_0\| \leq 2 \bar r_{T+1},
\end{align*}
where the last inequality follows from the definition of $\bar r_k$ (which also implies that the sequence $\{\bar r_k\}$ is monotonically non-decreasing.) Applying the preceding upper estimate in relation~\eqref{first_term_half_simplified}, we obtain the following upper estimate to the first term on the right hand side of relation~\eqref{eq_main}:
\begin{align}
    \frac{1}{2} \sum_{k=1}^{T} \frac{\bar{r}_k^2}{\alpha_k} \left(\hat d_k^2 - \hat d_{k+1}^2 \right) \leq 2 \sqrt{p_{T}} \bar d_{T+1} \bar r_{T+1} . \label{first_term_simplified}
\end{align}
The second quantity on the right hand side of relation~\eqref{eq_main} can be analyzed as follows:
\begin{align}
    \sum_{k=1}^{T} \bar{r}_k^2 \alpha_k \| s_f(x_k) \|^2
&= \sum_{k=1}^{T} \frac{\bar{r}_k^4}{\sqrt{p_k}} \| s_f(x_k) \|^2 \nonumber\\
&\le \bar{r}_{T+1}^2 \sum_{k=1}^{T} \frac{\bar{r}_k^2}{\sqrt{p_k}} \| s_f(x_k) \|^2 \nonumber\\
&\le \bar{r}_{T+1}^2 \sum_{k=1}^{T} \frac{p_k - p_{k-1}}{\sqrt{p_k}} \nonumber\\
& = \bar{r}_{T+1}^2 \sum_{k=1}^{T} \left( \sqrt{p_k} - \sqrt{p_{k-1}} \right) \underbrace{\left( \frac{\sqrt{p_k} + \sqrt{p_{k-1}}}{\sqrt{p_k}} \right)}_{\leq 2} \nonumber\\
& \leq 2 \bar{r}_{T+1}^2 \left( \sqrt{p_{T}} - \sqrt{p_{0}} \right) \nonumber\\
& = 2 \bar{r}_{T+1}^2 \sqrt{p_{T}} , \label{second_term_simplified}
\end{align}
since $p_{0} = 0$ based on the initialization. Substituting the estimates from relations~\eqref{first_term_simplified} and \eqref{second_term_simplified} back in relation~\eqref{eq_main} yields
\begin{align}
    \sum_{k=1}^{T} \bar{r}_k^2 \left( f(x_k) - f(x^*) \right)
    \leq \bar r_{T+1} ( 2\bar d_{T+1} + \bar r_{T+1}) \sqrt{p_{T}} . \label{main_expression1}
\end{align}
Since $p_0 = 0$, the quantity $p_{T}$ can be upper estimated as
\begin{align}
    p_{T} = p_0 + \sum_{k=1}^{T} \bar{r}_k^2 \| s_f(x_k) \|^2
    \le \bar{r}_{T+1}^2 \sum_{k=1}^{T} \| s_f(x_k) \|^2 
    \le \bar{r}_{T+1}^2 M_f^2 T . \nonumber
\end{align}
Hence, the following holds
\begin{align}
    \sqrt{p_{T}} = \bar{r}_{T+1} M_f \sqrt{T} . \nonumber
\end{align}
Combining the preceding relation with equation~\eqref{main_expression1}, we obtain surely for all $k \geq 0$,
\begin{align}
    \sum_{k=1}^{T} \bar{r}_k^2 \left( f(x_k) - f(x^*) \right) \leq \bar r_{T+1}^2 (2\bar d_{T+1} + \bar r_{T+1}) M_f \sqrt{T} . \label{main_expression2}
\end{align}
Let
\begin{align*}
    \bar x_T = \frac{\sum_{k=1}^T \bar r_k^2 x_k}{\sum_{k=1}^T \bar r_k^2} . 
\end{align*}
By the convexity of the function $f$, from relation~\eqref{main_expression2} we obtain surely
\begin{align*}
    f(\bar{x}_T) - f(x^*) \leq (2\bar d_{T+1} + \bar r_{T+1}) M_f \sqrt{T} \left(\frac{\bar r_{T+1}^2}{\sum_{i=1}^T \bar r_i^2} \right) .
\end{align*}
We apply Assumption~\ref{asum_set2} to upper estimate $2 \bar d_{T+1} + \bar r_{T+1} \leq 3D$. We change the subscript index from $T$ to $k$ yielding surely for any $k \geq 1$,
\begin{align*}
    f(\bar{x}_k) - f(x^*) \leq 3 D M_f \sqrt{k} \left( \frac{\bar r_{k+1}^2}{\sum_{i=1}^k \bar r_i^2} \right) .
\end{align*}
Let us denote the index
\begin{align}
    \tau = \argmin_{k \in \{1,\ldots,T\}} \frac{\bar r_{k+1}^2}{\sum_{i=1}^k \bar r_i^2} . \label{indx_tau}
\end{align}
Then, the preceding relation yields surely for all $T \geq 1$,
\begin{align*}
    f(\bar{x}_\tau) - f(x^*) \leq 3 D M_f \sqrt{T} \left( \min_{1 \leq k \leq T} \frac{\bar r_{k+1}^2}{\sum_{i=1}^k \bar r_i^2} \right) .
\end{align*}
Using Lemma~\ref{lem_sum_seq} with $s_k = \bar r_{k}^2$ for all $k \geq 1$, and $s_0 = \bar r_{0}^2 = r^2$ to upper estimate the right hand side of the preceding relation, we obtain surely for all $T \geq 1$,
\begin{align*}
    f(\bar{x}_\tau) - f(x^*) \leq 3 D M_f \sqrt{T} \left(\frac{\left(\frac{\bar r_{T+1}^2}{r^2}\right)^{\frac{1}{T}} \ln \left( \frac{e \bar r_{T+1}^2}{r^2} \right)}{T} \right) =  \left(\frac{ 3 D M_f \left(\frac{\bar r_{T+1}}{r}\right)^{\frac{2}{T}} \ln \left( \frac{e \bar r_{T+1}^2}{r^2} \right)}{\sqrt{T}} \right) .
\end{align*}
Applying Assumption~\ref{asum_set2} to upper bound $\bar r_{T+1} \leq D$, we can obtain surely for all $T \geq 1$,
\begin{align}
    f(\bar{x}_\tau) - f(x^*) \leq \left(\frac{ 3 D M_f \left(\frac{D}{r}\right)^{\frac{2}{T}} \ln \left( \frac{e D^2}{r^2} \right)}{\sqrt{T}} \right) . \label{func_diff_up}
\end{align}
Now, the quantity $f(\bar{x}_\tau) - f(x^*)$ can be negative since the point $\bar{x}_\tau$ can be infeasible. Hence, we will next obtain the lower bound on the expected value of this quantity using the feasibility update relations. 

We proceed along the similar lines of analysis as done towards the end of the proof of Theorems~\ref{thm_sc_conv} and~\ref{thm_epsilon_tolerance}. We note relation~\eqref{exp_lb} and we apply Assumption~\ref{asum_set2} on the last expression of Lemma~\ref{lem_rand_feas} yielding
\begin{align}
    \EXP{f(\bar x_\tau) - f(x^*)} \geq -M_f \EXP{\dist(\bar x_{\tau}, X \cap Y)} \quad \text{and} \quad \EXP{\dist(x_{k},X \cap Y)} \leq \EXP{(1-q)^{\frac{N_{k}}{2}}} D . \label{dist_bdeq}
\end{align}
Hence,
\begin{align}
    \EXP{\dist(\bar x_{\tau}, X \cap Y)} &= \EXP{\|\bar x_{\tau} - \Pi_{X \cap Y} [\bar x_{\tau}] \|} \nonumber\\
    &= \EXP{\left\| \frac{\sum_{k=1}^\tau \bar r_k^2 (x_k - \Pi_{X \cap Y} [x_{k}])}{\sum_{k=1}^\tau \bar r_k^2} \right\|} \nonumber\\
    & \leq \EXP{\frac{\sum_{k=1}^\tau \bar r_k^2 \EXP{\|x_k - \Pi_{X \cap Y} [x_{k}]\| \mid \tau}}{\sum_{k=1}^\tau \bar r_k^2}} \nonumber\\
    &\leq \EXP{\frac{\sum_{k=1}^\tau \bar r_k^2 \EXP{(1-q)^{\frac{N_{k}}{2}}} D}{\sum_{k=1}^\tau \bar r_k^2}}, \label{dist_bdeq2}
\end{align}
where the last relation follows from equation~\eqref{dist_bdeq}. From here, there are two ways to proceed with the proof.

\textit{Case 1:} We can upper estimate the right hand side of relation~\eqref{dist_bdeq2} as
\begin{align*}
    \EXP{\dist(\bar x_{\tau}, X \cap Y)} \leq D \EXP{\max_{1 \leq k \leq \tau} (1-q)^{\frac{N_{k}}{2}}} \frac{\sum_{k=1}^\tau \bar r_k^2}{\sum_{k=1}^\tau \bar r_k^2} \leq D \max_{1 \leq k \leq T} \EXP{(1-q)^{\frac{N_{k}}{2}}} .
\end{align*}
Hence, the preceding relation when used in relation~\eqref{dist_bdeq} implies
\begin{align}
    \EXP{f(\bar x_\tau) - f(x^*)} \geq - D M_f \max_{1 \leq k \leq T} \EXP{(1-q)^{\frac{N_{k}}{2}}} . \label{dist_bdeq3}
\end{align}

\textit{Case 2:} Another way to upper estimate the right hand side of relation~\eqref{dist_bdeq2} is as follows:
\begin{align*}
    \EXP{\dist(\bar x_{\tau}, X \cap Y)} \leq D  \left(\frac{\max_{1 \leq k \leq \tau} \bar r_k^2}{\sum_{k=1}^\tau \bar r_k^2} \right) \sum_{k=1}^\tau \EXP{(1-q)^{\frac{N_{k}}{2}}}.
\end{align*}
Now since $\bar r_{k-1} \leq \bar r_k$ for all $k \geq 1$, we obtain $\max_{1 \leq k \leq \tau} \bar r_k^2 = \bar r_\tau^2$. Hence, the preceding relation becomes
\begin{align}
    \EXP{\dist(\bar x_{\tau}, X \cap Y)} \leq D \EXP{\left(\frac{\bar r_\tau^2}{\sum_{k=1}^\tau \bar r_k^2} \right) \sum_{k=1}^\tau (1-q)^{\frac{N_{k}}{2}}} . \label{dist_bdeq4}
\end{align}
Note that from relation~\eqref{indx_tau}, we obtain
\begin{align*}
    \min_{k \in \{1,\ldots,T\}} \frac{\bar r_{k+1}^2}{\sum_{i=1}^k \bar r_i^2} = \frac{\bar r_{\tau+1}^2}{\sum_{i=1}^\tau \bar r_i^2} .
\end{align*}
Upper estimating relation~\eqref{dist_bdeq4} with $\bar r_{\tau}^2 \leq \bar r_{\tau+1}^2$ and then using the preceding relation to it, we obtain
\begin{align}
    \EXP{\dist(\bar x_{\tau}, X \cap Y)} \leq D \left(\min_{1 \leq k \leq T} \frac{\bar r_{k+1}^2}{\sum_{i=1}^k \bar r_i^2} \right) \EXP{\sum_{k=1}^\tau (1-q)^{\frac{N_{k}}{2}}} \leq D \left(\min_{1 \leq k \leq T} \frac{\bar r_{k+1}^2}{\sum_{i=1}^k \bar r_i^2} \right) \sum_{k=1}^T \EXP{(1-q)^{\frac{N_{k}}{2}}} .
\end{align}
Using Lemma~\ref{lem_sum_seq} with $s_k = \bar r_k^2$ for all $k \geq 1$ and the initial point $s_0 = r^2$ and then applying Assumption~\ref{asum_set2}, we obtain the upper estimate of the preceding relation as
\begin{align*}
    \EXP{\dist(\bar x_{\tau}, X \cap Y)} \leq \frac{D}{T} \left(\frac{\bar r_{T+1}^2}{r^2}\right)^{\frac{1}{T}} \ln \left( \frac{e \bar r_{T+1}^2}{r^2} \right)\sum_{k=1}^T \EXP{(1-q)^{\frac{N_{k}}{2}}} \leq \frac{D}{T} \left( \frac{D}{r} \right)^{\frac{2}{T}} \ln \left( \frac{e D^2}{r^2} \right) \sum_{k=1}^T \EXP{(1-q)^{\frac{N_{k}}{2}}}.
\end{align*}
Hence, using the preceding relation back to equation~\eqref{dist_bdeq}, we obtain
\begin{align}
    \EXP{f(\bar x_\tau) - f(x^*)} \geq -M_f \frac{D}{T} \left( \frac{D}{r} \right)^{\frac{2}{T}} \ln \left( \frac{e D^2}{r^2} \right)\sum_{k=1}^T \EXP{(1-q)^{\frac{N_{k}}{2}}}. \nonumber
\end{align}
Combining the preceding relation with equation~\eqref{dist_bdeq3}, we obtain the lower bound
\begin{align}
    \EXP{f(\bar x_\tau) - f(x^*)} \geq -D M_f \min \left\{ \max_{1 \leq k \leq T} \EXP{(1-q)^{\frac{N_{k}}{2}}},  \frac{\left( \frac{D}{r} \right)^{\frac{2}{T}} \ln \left( \frac{e D^2}{r^2} \right) \sum_{k=1}^T \EXP{(1-q)^{\frac{N_{k}}{2}}}}{T}  \right\} . \nonumber
\end{align}
Combining the preceding lower bound with the upper bound in relation~\eqref{func_diff_up}, we can obtain 
\begin{align*}
    \EXP{|f(\bar x_\tau) - f(x^*)|} \leq \max \{A_1(T), \min \{A_2(T),A_3(T)\}\} ,
\end{align*}
where
\begin{align*}
    &A_1(T) = \frac{3 D M_f}{\sqrt{T}} \left(\frac{D}{r}\right)^{\frac{2}{T}} \ln \left( \frac{e D^2}{r^2} \right), \\
    &A_2(T) = D M_f \max_{1 \leq k \leq T} \EXP{(1-q)^{\frac{N_{k}}{2}}}, \\
    &A_3(T) = \frac{D M_f}{T} \left( \frac{D}{r} \right)^{\frac{2}{T}} \ln \left( \frac{e D^2}{r^2} \right) \sum_{k=1}^T \EXP{(1-q)^{\frac{N_{k}}{2}}}.
\end{align*}
This concludes the proof of the theorem.
\end{proof}

\paragraph{Discussion on some of the bounds in Theorem~\ref{thm_adaptive_step}:}
The bounds $A_1(T)$ and $A_3(T)$ involve the quantity $\left(\frac{D}{r}\right)^{\frac{2}{T}}$, which tends to $1$ as $T$ increases. Moreover, for all $T \geq 1$, the quantity $\left(\frac{D}{r}\right)^{\frac{2}{T}}$ in $A_1(T)$ and $A_3(T)$ can be replaced by a modest constant. To see this, we follow similar proof lines as in \cite{moshtaghifar2025dada} [Theorem 2.1]. If $T \geq \ln \left( \frac{D^2}{r^2} \right)$, then the quantity
\begin{align}
    \left( \frac{D^2}{r^2} \right)^{\frac{1}{T}} = \exp \left( \frac{1}{T} \ln \left( \frac{D^2}{r^2} \right) \right) \leq e . \label{rel_T_gtr}
\end{align}
On the other hand, if $T < \ln\left( \frac{D^2}{r^2} \right)$, then 
\begin{align}
    e \ln \left( \frac{e D^2}{r^2} \right) \geq \ln \left( \frac{e D^2}{r^2} \right) \geq \ln \left( \frac{D^2}{r^2} \right) > T . \label{rel_T_lb}
\end{align}
Moreover for this case, the upper bound on $f(\bar x_\tau) - f^*$ can be estimated using Assumptions~\ref{asum_convex} and \ref{asum_set2} and the preceding relation, yielding
\begin{align*}
    f(\bar x_\tau) - f^* \leq M_f D \leq \frac{M_f D}{\sqrt{T}} T \leq \frac{e M_f D}{\sqrt{T}} \ln \left( \frac{e D^2}{r^2} \right).
\end{align*}
Hence for all $T \geq 1$, $A_1(T)$ can be upper bounded as
\begin{align*}
    A_1(T) \leq \frac{2 e D M_f} {\sqrt{T}} \ln \left( \frac{e D^2}{r^2} \right) .
\end{align*}
Similarly, we can also upper estimate $A_3(T)$. When $T < \ln\left( \frac{D^2}{r^2} \right)$, then from the analysis of the infeasibility gap as done in Appendix~\ref{thm_adaptive_step_proof}, we see
\begin{align*}
    \EXP{f(\bar x_\tau) - f^*} \geq -M_f \EXP{\dist(\bar x_{\tau}, X \cap Y)} .
\end{align*}
Applying Assumption~\ref{asum_set2} to the preceding relation and finally relation~\eqref{rel_T_lb}, we can obtain
\begin{align*}
    &\EXP{|f(\bar x_\tau) - f^*|} \leq M_f D = \frac{M_f D}{T} T \leq  \frac{e M_f D}{T} \ln \left( \frac{e D^2}{r^2} \right) \leq \frac{e M_f D}{T} \ln \left( \frac{e D^2}{r^2} \right) \max \left\{1, \sum_{k=1}^T \EXP{(1-q)^{\frac{N_{k}}{2}}} \right\} .
\end{align*}
When $T \geq \ln \left( \frac{D^2}{r^2} \right)$, relation~\eqref{rel_T_gtr} holds true. Hence, we finally obtain for all $T \geq 1$,
\begin{align*}
    A_3(T) \leq \frac{ e D M_f}{T} \ln \left( \frac{e D^2}{r^2} \right) \max \left\{1, \sum_{k=1}^T \EXP{(1-q)^{\frac{N_{k}}{2}}} \right\} .
\end{align*}
%We will explain later how $\sum_{k=1}^\tau \EXP{(1-q)^{\frac{N_{k}}{2}}}$ can be controlled for special growth on the sample sizes.
%
%
The bound on $A_2(T)$ decays geometrically fast with respect to $\max_{1 \leq k \leq T} \EXP{(1-q)^{\frac{N_{k}}{2}}}$. If the number $N_k$ of samples is deterministic, with $N_k \geq N$ for all $k \geq 1$, and the number $T$ of iterations is chosen in advance, then the minimum number $N$ of samples required to make the bounds $A_1(T)$ and $A_2(T)$ to be equal is bounded below as follows:
\begin{align*}
    N &\geq \max \left\{ 1, \frac{2 \ln \left( \frac{\sqrt{T}}{3 \left(\frac{D}{r}\right)^{\frac{2}{T}} \ln \left( \frac{e D^2}{r^2} \right)} \right)}{\ln \left( \frac{1}{1-q} \right)} \right\} = \max \left\{ 1, \frac{\ln T - \frac{4}{T}\ln \left(\frac{D}{r}\right) - 2\ln \left( 3 \ln \frac{e D^2}{r^2} \right)}{\ln \left( \frac{1}{1-q} \right)} \right\} .
\end{align*}
Under Assumption~\ref{asum_set2}, all constants can be estimated as discussed in Appendix~\ref{epsilon_tolerance_proof}. If the number $N_k$ of samples is stochastic, then a selection range can be established on the parameter of the underlying sampling distribution.

%%%%%%%%%%%%%%%%%%%%%%%%%%%%%%%%%%%%%%%%%%%%%%%%%%%%%%%%%%%%%%%%%%%%%%%%%%%%%%%%%%%%%%%%%%%%%%%%%%%%%%%%%%%%%%%%%%%%%%%%%%%%%%%%%%%%%%%%%%%%%%%%

\subsection{Upper bound on the quantity $\sum_{k=1}^T \EXP{(1-q)^{\frac{N_{k}}{2}}}$}\label{appendx_sum_bd}

In this subsection, we will derive the upper bound on the quantity $\sum_{k=1}^T \EXP{(1-q)^{\frac{N_{k}}{2}}}$. We will consider the cases when the sample selection $N_k$ is deterministic and stochastic, and when a constant upper bound for $\sum_{k=1}^T \EXP{(1-q)^{\frac{N_{k}}{2}}}$ exists.

\subsubsection{Deterministic $N_k$ with $N_k = \lceil k^{\frac{1}{p}} \rceil$ and $p >0$}\label{appndx_det_samples}

For this case, the analysis follows as \cite{chakraborty2025randomized}[Lemma 5.4]. We obtain
\begin{align*}
    &\sum_{k=1}^T \EXP{(1-q)^{\frac{N_{k}}{2}}} = \sum_{k=1}^T \left( \sqrt{1-q} \right)^{\lceil k^{\frac{1}{p}} \rceil} \leq \sum_{k=1}^T \left( \sqrt{1-q} \right)^{k^{\frac{1}{p}}} \leq \int_{k=0}^{T} \left( \sqrt{1-q} \right)^{k^{\frac{1}{p}}} dk \\
    &= \int_{k=0}^{T} \exp \left( \ln \left( \sqrt{1-q} \right)^{k^{\frac{1}{p}}} \right) dk = \int_{k=0}^{T} \exp \left( - ak^{\frac{1}{p}} \right) dk ,
\end{align*}
where $a = \frac{1}{2} \ln \left( \frac{1}{1-q}\right)$. With change of variables as $\widetilde p = ak^{\frac{1}{p}}$, we obtain
\begin{align*}
    \sum_{k=1}^T \EXP{(1-q)^{\frac{N_{k}}{2}}} \leq \frac{p}{a^p} \int_{0}^{aT^{\frac{1}{p}}} \widetilde p^{p-1} \exp(- \widetilde p) d \widetilde p \leq \frac{p}{a^p} \int_{0}^\infty \widetilde p^{p-1} \exp(- \widetilde p) d \widetilde p = \frac{p \Gamma(p)}{a^p} = \frac{2^p \Gamma(p+1)}{\left( \ln \left( \frac{1}{1-q}\right) \right)^p},
\end{align*}
where $\Gamma(p)$ is the gamma function. Therefore, the preceding constant upper bound is quite general and holds for any $p > 0$ and any $1 \leq T \leq \infty$. For specific values of $p$, a tighter upper bound can be derived. We will not discuss these special cases here, as the analysis proceeds similarly without altering the upper limit of integration in the preceding relation and doing an integration by parts.

\subsubsection{Sampling based on Poisson Distribution with parameter $\lambda_k = \lceil k^{\frac{1}{p}} \rceil$ and $p > 0$}

We consider $N_k \sim \text{Poisson}(\lambda_k)$ with $\lambda_k = \lceil k^{\frac{1}{p}} \rceil$. Using the expression of $\EXP{(1 - q)^{N_k/2}}$ derived in Appendix~\ref{appdx_poisson_dist}, we see
\begin{align*}
    \sum_{k=1}^T \EXP{(1-q)^{\frac{N_{k}}{2}}} = \sum_{k=1}^T \exp{\left(-\lceil k^{\frac{1}{p}} \rceil \left(1 - (1 - q)^{1/2} \right) \right)} \leq \sum_{k=1}^T \exp{\left(- k^{\frac{1}{p}} \left(1 - \sqrt{1 - q} \right) \right)}
\end{align*}
Let us consider $a = 1 - \sqrt{1 - q}$. Then the rest of the analysis follows similar steps as Appendix~\ref{appndx_det_samples} and we can obtain the following constant upper bound for any $1 \leq T \leq \infty$,
\begin{align*}
    \sum_{k=1}^T \EXP{(1-q)^{\frac{N_{k}}{2}}} \leq \frac{\Gamma(r+1)}{\left(1 - \sqrt{1 - q} \right)^p} .
\end{align*}

\subsubsection{Sampling based on Binomial Distribution with parameter $n_k = \lceil k^{\frac{1}{p}} \rceil$ and $p > 0$}

We choose $N_k \sim \text{Binomial}(n_k, \bar p)$ with the parameter $n_k = \lceil k^{\frac{1}{p}} \rceil$ and $p > 0$. Following the analysis as in Appendix~\ref{appdx_bin_dist}, we can obtain
\begin{align*}
    \sum_{k=1}^T \EXP{(1-q)^{\frac{N_{k}}{2}}} &= \sum_{k=1}^T \left( 1 - \bar p \left( 1 - \sqrt{1 - q} \right) \right)^{\lceil k^{\frac{1}{p}} \rceil} \leq \sum_{k=1}^T \left( 1 - \bar p \left( 1 - \sqrt{1 - q} \right) \right)^{k^{\frac{1}{p}}} \\
    &= \sum_{k=1}^T \exp \left( \ln\left(\left( 1 - \bar p \left( 1 - \sqrt{1 - q} \right) \right)^{k^{\frac{1}{p}}} \right)\right) = \sum_{k=1}^T \exp \left( -a {k^{\frac{1}{p}}} \right) ,
\end{align*}
where $a = \ln \left( \frac{1}{1 - \bar p \left( 1 - \sqrt{1 - q} \right)} \right)$. The rest of the analysis follows the same lines as done in Appendix~\ref{appndx_det_samples} and we can finally obtain for all $1 \leq T \leq \infty$,
\begin{align*}
    \sum_{k=1}^T \EXP{(1-q)^{\frac{N_{k}}{2}}} \leq \frac{\Gamma(r+1)}{\left( \ln \left( \frac{1}{1 - \bar p \left( 1 - \sqrt{1 - q} \right)} \right) \right)^p} .
\end{align*}

%%%%%%%%%%%%%%%%%%%%%%%%%%%%%%%%%%%%%%%%%%%%%%%%%%%%%%%%%%%%%%%%%%%%%%%%%%%%%%%%%%%%%%%%%%%%%%%%%%%%%%%%%%%%%%%%%%%%%%%%%%%%%%%%%%%%%%%%%%%%%%%%%%%%%%%%%%%%%%%%%%%%%%%%%%%%%%%%%%%%%%%%%%%%%%%%%%%%%%%%%%%%%%%%%%%%%%%%%%%%%%%%%%%%%%%%%%%%%%%%%%%%%%%%%%%%%%%%%%%%%%%%%%%%%%%%%%%%%%%%%%%%%%%%%%%%%%%%%%%%%%%%%%%%%%%%%%%%%%%%%%%%%%%%%%%%%%%%%%%%%%%%%%%%%%%%%%%%%%%%%%%%%%%%%%%%%%%%%%%%%%%%

\section{Missing Details of T-DoWS with Randomized Feasibility}

\subsection{Tamed Distance over Weighted Gradients (T-DoWS) with Randomized Feasibility Algorithm}\label{app_algo_tdows}

We present the complete T-DoWS algorithm, combined with Randomized Feasibility (cf. Algorithm~\ref{algo_rand_feas}), in Algorithm~\ref{algo_tdows_feas} which is provided below.

%
%%%%%%%%%%%%%%%%%%%%%%%%%%%%%%%%%%%%%%%%%%%%%%%%%%%%%%%%%%%%%%%%%%%%%%%%%%%%%%%%%%%%%%%%%%%%%%%%%%%%%%%%%%%%%%%%%%%%%%%%%%%%%%%%%%%%%%%%%%%%%%%%
\begin{algorithm}
\caption{T-DoWS with Randomized Feasibility}
\label{algo_tdows_feas}
\begin{algorithmic}[1]
\STATE \textbf{Input:} $v_0 = v_1 \in Y$, estimates $p_{0} \geq 0$, $\bar r_{0} = r > 0$
\STATE \textbf{Pass:} $v_1$ and $N_{1} \in \mathcal{I}_{1}$ to Algorithm~\ref{algo_rand_feas} to obtain $x_1 \in Y$
\STATE \textbf{Equate:} $x_0 = x_1 \in Y$
\FOR{$k = 1, 2, \dots, T$}
    \STATE $\bar{r}_k = \max\left\{ \|x_k - x_0\|, \bar{r}_{k-1} \right\}$
    \STATE $p_k = p_{k-1} + \bar{r}_k^2 \left\| s_f(x_k) \right\|^2$
    %\STATE $\alpha_k = \dfrac{\bar{r}_k^2}{\sqrt{p_k}}$
    \STATE $v_{k+1} = \Pi_Y \left[ x_k - \alpha_k s_f(x_k) \right]$ with $\alpha_k = \begin{cases}
        \frac{\bar{r}_k^2}{2 \sqrt{p_k} \ln \left( \frac{e p_k}{p_1} \right)} \quad &\text{if $p_0 = 0$} \\
        \frac{\bar{r}_k^2}{\sqrt{2 p_k} \ln \left( \frac{e p_k}{p_0} \right)} \quad &\text{if $p_0 > 0$}
    \end{cases}$
    \STATE $x_{k+1} = \text{Algorithm~\ref{algo_rand_feas}}(v_{k+1}, N_{k+1})$
    %\STATE \textbf{Pass:} $v_{k+1}$ and $N_{k+1} \in \mathcal{I}_{k+1}$ to Algorithm~\ref{algo_rand_feas} to obtain $x_{k+1}$
\ENDFOR
\end{algorithmic}
\end{algorithm}
%%%%%%%%%%%%%%%%%%%%%%%%%%%%%%%%%%%%%%%%%%%%%%%%%%%%%%%%%%%%%%%%%%%%%%%%%%%%%%%%%%%%%%%%%%%%%%%%%%%%%%%%%%%%%%%%%%%%%%%%%%%%%%%%%%%%%%%%%%%%%%%%
%
%

For Algorithm~\ref{algo_tdows_feas}, it can be shown that, the iterates and the distance estimates are surely bounded, following arguments similar to those in \cite{khaled2023dowg} [Lemma~4]\footnote{The T-DoWG version provided in \cite{khaled2023dowg} [Lemma~4] is missing a scaling factor of $\tfrac{1}{2}$ in the step-size update on which their analysis is based -- potentially a typographical oversight. In addition, their analysis uses the term $\ln\!\left(\tfrac{2p_k}{p_1}\right)$ in the step-size scaling, which would lead to a slightly different constant in the final bound compared to what is presented in \cite{khaled2023dowg} [Lemma~4]. In contrast, we use the scaling factor $\ln\!\left(\tfrac{e\, p_k}{p_1}\right)$, which avoids such additional constants in the proof. If $p_0 = 0$ and one uses the scaling factor $\ln(ep_k/p_0)$ in $\alpha_k$, then even though \cite{ivgi2023dog} [Lemma~6] results hold true, the step sizes $\{\alpha_k\}_{k \geq 1} = 0$, leading to no progress in the method. For this case, we used a different scaling on $\alpha_k$. On the other hand, if $p_0 >0$, then \cite{ivgi2023dog} [Lemma~6] can be applied directly, and we can choose a different step size $\a_k$ with a reduced scaling factor $\tfrac{1}{\sqrt{2}}$ as provided in Algorithm~\ref{algo_tdows_feas}, which still guarantees boundedness of the iterates.} and using Assumption~\ref{asum_sublevel}. The next lemma in the next subsection formally states this fact.

%%%%%%%%%%%%%%%%%%%%%%%%%%%%%%%%%%%%%%%%%%%%%%%%%%%%%%%%%%%%%%%%%%%%%%%%%%%%%%%%%%%%%%%%%%%%%%%%%%%%%%%%%%%%%%%%%%%%%%%%%%%%%%%%%%%%%%%%%%%%%%%%

\subsection{Lemma on boundedness of iterates, distance estimates and subgradients for Algorithm~\ref{algo_tdows_feas}}\label{lem_bounded_itr_tdows_proof}

\begin{lemma}\label{lem_bounded_itr_tdows}
    Let Assumptions~\ref{asum_set1}, \ref{asum_convex}, and \ref{asum_sublevel} hold. Let $B$ be the bound defined in Assumption~\ref{asum_sublevel} and $x^*$ be a solution to problem~\eqref{prob_eq} (which is assumed to exist as per Assumption~\ref{asum_convex}).Then, if the initialization parameter $r$ satisfies $r \leq 4 \|x_0 - x^*\|$, the sequences $\{x_k\}$ and $\{\bar r_k\}$ produced by Algorithm~\ref{algo_tdows_feas} are surely bounded for all $k \geq 1$, i.e.,
    \begin{align*}
        \bar r_k \leq \widehat B := \max\{B + \|x_0\|, 4 \|x_0 - x^*\|\}, \;\; \|x_k-x^*\| \leq \widetilde B := \max\{B + \|x^*\|, 3 \|x_0 - x^*\|\} .
        %\|x_k\| \leq \max\{B, 3\|x_0 - x^*\| + \|x^*\| \} \qquad \text{surely for all $k \geq 1$.}
    \end{align*}
    Moreover, the subgradients of the objective and the constraint functions are bounded, and the sequences $\{v_k\}$ and $\{z_k^i\}$ for $i = 1, \ldots, N_k$ are also bounded, i.e., there exist deterministic scalars $M_f > 0$ and $M_g > 0$ such that surely for all $i = 1, \ldots, N_k$ and $k \geq 1$, $\|s_f(x_k)\| \leq M_f$, $\|d_k^i\| \leq M_g$, and
    \begin{align*}
        \|z_{k+1}^i - x^*\| \leq \|v_{k+1} - x^*\| \leq \bar D(p_0) \quad \text{where } \bar D(p_0) = \begin{cases}
            \frac{\widehat B^2 M_f}{2 r \|s_f(x_0)\|} + \widetilde B  \quad &\text{if $p_0 = 0$,} \\
            \frac{\widehat B^2 M_f}{\sqrt{2(p_0 + r^2 \|s_f(x_0)\|^2)}} + \widetilde B \quad &\text{if $p_0 > 0$.}
        \end{cases}
    \end{align*}
\end{lemma}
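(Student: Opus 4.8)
The plan is to establish all four bounds simultaneously by a strong induction on $k$, because they are mutually entangled: the iterate bounds yield the subgradient bounds (local boundedness of subdifferentials of convex functions on a compact set, \cite{bertsekas2003convex}[Proposition~4.2.3]), the subgradient bounds together with the iterate bounds control the step sizes $\alpha_j$ and the accumulator $p_j$, and these feed back into the bound on the next iterate. I would first reduce everything to controlling $\|x_k-x^*\|$ and $\|v_{k+1}-x^*\|$: the feasibility iterates $z_j^i$ need no separate treatment, since the monotonicity $\|z_j^i-x^*\|\le\|z_j^{i-1}-x^*\|$ established inside the proof of Lemma~\ref{lem_rand_feas}(a) (with $x^*\in X\cap Y$) squeezes them between $\|v_j-x^*\|$ and $\|x_j-x^*\|$, and $\bar r_k$ follows from $\|x_i-x_0\|\le\|x_i-x^*\|+\|x^*-x_0\|$ together with the explicit form of $\widehat B$. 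Throughout, the randomness in play is only the inner sampling $(\omega_j^i,N_j)$, against which every inequality is \emph{sure} — this is exactly why the sure (not merely expected) statement Lemma~\ref{lem_rand_feas}(a) is the right tool, and one treats the random $x_0$ as fixed along a sample path, as in Lemma~\ref{lem-bounded-iterates}. The base case $k=1$ is immediate from $x_1=x_0$: $\bar r_1=\max\{0,r\}=r\le 4\|x_0-x^*\|\le\widehat B$ and $\|x_1-x^*\|=\|x_0-x^*\|\le 3\|x_0-x^*\|\le\widetilde B$.

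For the inductive step the workhorse inequality combines the nonexpansive projected subgradient update, the sure nonexpansiveness of the feasibility step (Lemma~\ref{lem_rand_feas}(a) with $x=x^*\in X\cap Y$), and convexity of $f$:
\[
\|x_{k+1}-x^*\|^2 \le \|v_{k+1}-x^*\|^2 \le \|x_k-x^*\|^2 - 2\alpha_k\bigl(f(x_k)-f^*\bigr) + \alpha_k^2\|s_f(x_k)\|^2 .
\]
I would then unroll this back to the most recent index $\ell\le k$ at which $x_\ell$ lies in the sublevel set $S:=\{y\in Y\mid f(y)\le f^*\}$ — or all the way to $x_0$, in which case $f(x_j)>f^*$ for every $1\le j\le k$ and all the gap terms $-2\alpha_j(f(x_j)-f^*)$ are nonpositive and are discarded. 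The accumulated error $\sum_j\alpha_j^2\|s_f(x_j)\|^2$ is where the taming pays off: writing $\alpha_j^2\|s_f(x_j)\|^2=\tfrac{\bar r_j^2}{4(\ln(ep_j/p_1))^2}\cdot\tfrac{p_j-p_{j-1}}{p_j}$ when $p_0=0$ (and the analogue, with the $\sqrt2$-scaling and $\ln(ep_j/p_0)$, when $p_0>0$), using $\bar r_j^2\|s_f(x_j)\|^2=p_j-p_{j-1}$, bounding $\bar r_j\le\bar r_k\le\widehat B$ by the inductive hypothesis, and invoking Lemma~\ref{lem_for_itr_bd_tdows}, one gets $\sum_j\alpha_j^2\|s_f(x_j)\|^2\le\tfrac{\widehat B^2}{2}$ in either case. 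In the branch with no visit to $S$ this gives $\|x_{k+1}-x^*\|^2\le\|x_0-x^*\|^2+\tfrac{\widehat B^2}{2}$, and the definitions of $\widehat B$, $\widetilde B$ together with $r\le 4\|x_0-x^*\|$ are calibrated precisely so that the right-hand side is at most $\widetilde B^2$ in each branch of the $\max$ (when $\widehat B=4\|x_0-x^*\|$ it is $9\|x_0-x^*\|^2$; when $\widehat B=B+\|x_0\|$ one uses $\|x_0-x^*\|\le\tfrac14(B+\|x_0\|)$ and $B+\|x_0\|\le\tfrac43(B+\|x^*\|)$); the bound on $\|x_k-x_0\|$, hence on $\bar r_k$, then follows by the triangle inequality.

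The real difficulty — and the feature genuinely new relative to the unconstrained DoWG/DoG boundedness arguments, where $f\ge f^*$ makes the gap terms vanish identically — is the behavior at and just after a visit to $S$, since an infeasible iterate $x_\ell$ may have $f(x_\ell)<f^*$, so the single step out of $S$ can \emph{increase} the distance to $x^*$. Here I would invoke Assumption~\ref{asum_sublevel} to get $\|x_\ell\|\le B$, hence $\|x_\ell-x^*\|\le B+\|x^*\|\le\widetilde B$; bound $\bigl(f^*-f(x_\ell)\bigr)^+\le\|s_f(x^*)\|\,\|x^*-x_\ell\|\le M_f\widetilde B$ using the subgradient inequality for $f$ at the optimal point $x^*$; and bound the step length using $\ln(ep_\ell/p_1)\ge1$ and $p_\ell\ge p_1=r^2\|s_f(x_0)\|^2$ (so $\alpha_\ell\|s_f(x_\ell)\|\le\alpha_\ell M_f\le\tfrac{\widehat B^2 M_f}{2\sqrt{p_1}}$, which is exactly the leading term of $\bar D(p_0)$; for $p_0>0$ the same estimate holds with $p_1=p_0+r^2\|s_f(x_0)\|^2$ and the $\sqrt2$-scaling). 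Feeding $\|x_\ell-x^*\|^2+2\alpha_\ell M_f\widetilde B+\alpha_\ell^2 M_f^2=(\widetilde B+\alpha_\ell M_f)^2$ through the subgradient step and then Lemma~\ref{lem_rand_feas}(a) yields $\|z_{k+1}^i-x^*\|\le\|v_{k+1}-x^*\|\le\bar D(p_0)$, and re-telescoping the tamed error from $\ell+1$ to $k$ propagates the iterate and $\bar r$ bounds. At that point the only remaining work is bookkeeping: verifying that every inequality above closes with the specific constants $\widehat B$, $\widetilde B$, $\bar D(p_0)$ as defined — this, rather than any single conceptual step, is the main technical burden, and the handling of the $S$-visit (and of how it interacts with the resetting role of $\bar r_k$) is the delicate part. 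Once all iterates are confined to a fixed compact set, the subgradient bounds $\|s_f(x_k)\|\le M_f$ and $\|d_k^i\|\le M_g$ are immediate from \cite{bertsekas2003convex}[Proposition~4.2.3].
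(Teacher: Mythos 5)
Your skeleton is the same as the paper's: the projected-subgradient inequality combined with the sure nonexpansiveness of Lemma~\ref{lem_rand_feas}(a) at $x^*$, dropping the gap term when $f(x_k)\ge f^*$, invoking Assumption~\ref{asum_sublevel} when $f(x_k)<f^*$, controlling the accumulated tamed error $\sum_j\alpha_j^2\|s_f(x_j)\|^2\le\widehat B^2/2$ via Lemma~\ref{lem_for_itr_bd_tdows}, and an induction in the spirit of Lemma~\ref{lem-bounded-iterates}; your ``no visit to $S$'' branch, including the cross-branch arithmetic $\|x_0-x^*\|\le\tfrac14\widehat B$ and $\tfrac34(B+\|x_0\|)\le B+\|x^*\|$, is correct. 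The genuine gap is in the branch where the trajectory visits $S=\{y\in Y\mid f(y)\le f^*\}$. After the exit step you obtain only $\|x_{\ell+1}-x^*\|\le\widetilde B+\alpha_\ell M_f\le\bar D(p_0)$, which already exceeds the target $\widetilde B$, and re-telescoping the tamed error from $\ell+1$ to $k$ then gives $\|x_{k+1}-x^*\|^2\le\bar D(p_0)^2+\widehat B^2/2$, which is not $\le\widetilde B^2$. So the induction on $\|x_k-x^*\|\le\widetilde B$ and $\bar r_k\le\widehat B$ does not close, and your concluding claim that the constants are ``calibrated precisely'' so that every inequality closes is exactly the step that fails. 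Nor can the gap be patched by enlarging the constants and rerunning the same induction: the step-size bound scales as $\alpha_{\max}\le\widehat B^2/(2\sqrt{p_1})$, i.e., quadratically in the radius being controlled, so the required fixed-point inequality $\bigl(\widetilde B'+\alpha_{\max}(\widehat B')M_f\bigr)^2+\widehat B'^2/2\le\widetilde B'^2$ cannot be forced by taking $\widetilde B',\widehat B'$ large.

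For comparison, the paper's own proof does not attempt this repair: it performs the case-(ii) telescoping as though $f(x_j)\ge f^*$ held for all $j\le T$ and then ``combines cases (i) and (ii)'' by simply taking the maximum of the two sets of constants, leaving unaddressed precisely the interaction you single out (an excursion into the sublevel set followed by a step out, whose positive gap term cannot be dropped). So your diagnosis of the delicate point is accurate and goes beyond the paper, but the repair you sketch does not deliver the stated bounds; as written, your argument establishes the claimed $\widetilde B$, $\widehat B$ bounds only along sample paths that never enter $S$, plus the one-step bounds $\|z_{k+1}^i-x^*\|\le\|v_{k+1}-x^*\|\le\bar D(p_0)$ whenever $\|x_k-x^*\|\le\widetilde B$ is already available. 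A smaller point: your estimate $(f^*-f(x_\ell))^+\le\|s_f(x^*)\|\,\|x_\ell-x^*\|$ uses a subgradient at $x^*$, which is not covered by the constant $M_f$ defined from the iterates; this is harmless (enlarge $M_f$, or bound the cross term directly by $2\alpha_\ell\|s_f(x_\ell)\|\,\|x_\ell-x^*\|$ without invoking convexity), but the non-closing induction above is the substantive issue.
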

\begin{proof}
    By using the definition of $v_{k+1}$ in Algorithm~\ref{algo_tdows_feas}, the distance of the iterate $v_{k+1}$ from any arbitrary iterate $x \in Y$ can be given as
    \begin{align}
        \|v_{k+1} - x\|^2 \leq \|x_{k} - x\|^2 - 2 \alpha_k \la s_f(x_k), x_k - x \ra + \alpha_k^2 \|s_f(x_k)\|^2 . \label{vitr_descent}
    \end{align}
    Now applying Assumption~\ref{asum_convex} and substituting the point $x = x^*$ to relation~\eqref{vitr_descent} which is a solution to problem~\eqref{prob_eq}, we obtain for all $k \geq 1$,
    \begin{align*}
        \|v_{k+1} - x^*\|^2 \leq \|x_{k} - x^*\|^2 - 2 \alpha_k (f(x_k) - f(x^*)) + \alpha_k^2 \|s_f(x_k)\|^2 .
    \end{align*}
    Applying Lemma~\ref{lem_rand_feas}(a) to the left hand side of the preceding relation, we obtain
    \begin{align}
        \|x_{k+1} - x^*\|^2 \leq \|x_{k} - x^*\|^2 - 2 \alpha_k (f(x_k) - f(x^*)) + \alpha_k^2 \|s_f(x_k)\|^2 . \label{itr_bd_rel1}
    \end{align}
    Now, we consider two cases.

    \noindent\textit{Case (i):} If $f(x_k) < f(x^*)$ for any $k \geq 1$, then by Assumption~\ref{asum_sublevel}, we already have $\|x_k\| \leq B$.
    
    \noindent\textit{Case (ii):} If $f(x_k) \geq f(x^*)$, then relation~\eqref{itr_bd_rel1} can be upper estimated as
    \begin{align*}
        \|x_{k+1} - x^*\|^2 \leq \|x_{k} - x^*\|^2 + \alpha_k^2 \|s_f(x_k)\|^2 .
    \end{align*}
    For simplicity, let us denote $\hat d_k = \|x_{k} - x^*\|$. Then, we obtain
    \begin{align*}
        \hat d_{k+1}^2 - \hat d_k^2 \leq \alpha_k^2 \|s_f(x_k)\|^2 = \begin{cases}
            \frac{\bar{r}_k^4 \|s_f(x_k)\|^2}{4 p_k \left(\ln \left( \frac{e p_k}{p_1} \right) \right)^2} \quad &\text{if $p_0 = 0$,} \\
            \frac{\bar{r}_k^4 \|s_f(x_k)\|^2}{2 p_k \left(\ln \left( \frac{e p_k}{p_0} \right) \right)^2} \quad &\text{if $p_0 > 0$,}
        \end{cases} 
    \end{align*}
    where in the preceding relation, we substituted the value of $\a_k$ used in Algorithm~\ref{algo_tdows_feas}. Note that in the preceding relation, $\bar{r}_k^2 \|s_f(x_k)\|^2 = p_k - p_{k-1}$ for all $k \geq 1$. Next, we sum both sides of the preceding relation from $k=1$ to $T$ for any $T \geq 1$, and use $\bar{r}_k^2 \leq \bar{r}_T^2$ to obtain
    \begin{align*}
        \hat d_{T+1}^2 - \hat d_1^2 \leq \begin{cases}
            \frac{\bar{r}_T^2}{4} \sum_{k=1}^T \frac{p_k - p_{k-1}}{p_k \left(\ln \left( \frac{e p_k}{p_1} \right) \right)^2} \quad &\text{if $p_0 = 0$,} \\
            \frac{\bar{r}_T^2}{2} \sum_{k=1}^T \frac{p_k - p_{k-1}}{p_k \left(\ln \left( \frac{e p_k}{p_0} \right) \right)^2} \quad &\text{if $p_0 > 0$.}
        \end{cases}
    \end{align*}
    Applying Lemma~\ref{lem_for_itr_bd_tdows} to the preceding relation, we obtain the following for any $p_0 \geq 0$,
    \begin{align}
        \hat d_{T+1}^2 \leq \hat d_1^2 + \frac{\bar{r}_T^2}{2} . \label{itr_bd_rel2}
    \end{align}
    The rest of the analysis follows along the similar lines as \cite{khaled2023dowg}[Lemma~4].
    In Algorithm~\ref{algo_tdows_feas}, we initialize $x_1 = x_0$. Hence,
    \begin{align*}
        \hat d_1 = \|x_1 - x^*\| = \|x_0 - x^*\| = \hat d_0 . 
    \end{align*}
    Using the preceding relation back to relation~\eqref{itr_bd_rel2}, we obtain
    \begin{align}
        \hat d_{k+1}^2 \leq \hat d_0^2 + \frac{\bar{r}_k^2}{2} \quad \text{for all $1 \leq k \leq T$ with any $T \geq 1$} . \label{itr_bd_rel3}
    \end{align}
    In order to prove the boundedness of the iterates, we will take the help of recursion. Note that $\bar r_0 = \bar r_1 = r$ with the initialization $x_0 = x_1$ in Algorithm~\ref{algo_tdows_feas}. We require
    \begin{align}
        \bar r_0 = \bar r_1 = r \leq 4 \hat d_0 = 4 \|x_0 - x^*\| , \label{init_cond_rad}
    \end{align}
    to hold in order to proceed further with the proof. Let us assume that we choose a small value of $r$ in the initialization such that the condition in relation~\eqref{init_cond_rad} is satisfied. So let us consider that \begin{align}
        \bar r_k \leq 4 \hat d_0 \quad \text{for $k \geq 1$} . \label{itr_bd_rel4}
    \end{align}
    Then by induction, we require to show that $\bar r_{k+1} \leq 4 \hat d_0$ for $k \geq 1$. Using relation~\eqref{itr_bd_rel3}, we obtain
    \begin{align}
        \hat d_{k+1}^2 \leq \hat d_0^2 + \frac{16}{2} \hat d_0^2 = 9 \hat d_0^2 . \label{itr_bd_rel5}
    \end{align}
    Hence $\hat d_{k+1} \leq 3 \hat d_0$ for any $k \geq 1$. Now,
    \begin{align*}
        \bar r_{k+1} = \max\{\|x_{k+1}-x_0\|, \bar r_k\}.
    \end{align*}
    If $\bar r_k \geq \|x_{k+1}-x_0\|$, then $\bar r_{k+1} = \bar r_k \leq 4 \hat d_0$. This ensures that $\hat d_{k+2} \leq 3 \hat d_0$ following relations~\eqref{itr_bd_rel3} and \eqref{itr_bd_rel5}. On the other hand, if $\bar r_k < \|x_{k+1}-x_0\|$, then
    \begin{align*}
        \bar r_{k+1} = \|x_{k+1}-x_0\| \leq \|x_{k+1} - x^*\| + \|x_0-x^*\| = \underbrace{\hat d_{k+1}}_{\leq 3 \hat d_0} + \hat d_0 \leq 4 \hat d_0.
    \end{align*}
    Hence, in either case, $\bar r_{k+1} \leq 4 \hat d_0$ and $\hat d_{k+2} = \|x_{k+2} - x^*\| \leq 3 \hat d_0$ for any $k \geq 1$. This concludes the recursion. Hence, combining cases (i) and (ii), we conclude that if $r \leq 4 \|x_0 - x^*\|$, then for all $k \geq 1$, 
    \begin{align}
        \bar r_k \leq \widehat B := \max\{B + \|x_0\|, 4 \|x_0 - x^*\|\} \quad \text{and} \quad \hat d_k \leq \widetilde B := \max\{B + \|x^*\|, 3 \|x_0 - x^*\|\} \quad \text{surely}. \label{d_and_r_bd}
    \end{align}
    Moreover, we also see that for all $k \geq 1$,
    \begin{align*}
        \|x_k\| \leq \max\{B, 3\|x_0 - x^*\| + \|x^*\| \} .
    \end{align*}
    The preceding relation shows that the iterate sequences $\{x_k\}$ of Algorithm~\ref{algo_tdows_feas} are always surely bounded for all $k \geq 1$. This further implies that the subgradient of the objective function $\|s_f(x_k)\|$ and the subgradients of the constraints $\|d_k^i\|$ are surely bounded~\cite{bertsekas2003convex}[Proposition~4.2.3], i.e., there exists constants $M_f > 0$ and $M_g > 0$ such that
    \begin{align}
        \|s_f(x_k)\| \leq M_f \quad \text{and} \quad  \|d_k^i\| \leq M_g \quad \text{for all $i = 1, \ldots, N_k$ and $k \geq 1$.} \label{grad_subgrad_bd}
    \end{align}

    Next, we show that the iterates $v_{k+1}$ are also surely bounded for all $k \geq 1$. We let $x = x_k \in Y$ in relation~\eqref{vitr_descent} and obtain
    \begin{align*}
        \|v_{k+1} - x_k\|^2 \leq \alpha_k^2 \|s_f(x_k)\|^2 .
    \end{align*}
    Using the subgradient boundedness (relation~\eqref{grad_subgrad_bd}), from the preceding relation we obtain
    \begin{align*}
        \|v_{k+1} - x_k\| \leq M_f \alpha_k .
    \end{align*}
    Now, the quantity $\|v_{k+1} - x^*\|$ can be upper estimated using triangle inequality as
    \begin{align*}
        \|v_{k+1} - x^*\| \leq \|v_{k+1} - x_k\| + \|x_k - x^*\| \leq M_f \alpha_k + \underbrace{\|x_k - x^*\|}_{\hat d_k} .
    \end{align*}
    Applying relation~\eqref{d_and_r_bd} to the preceding relation yields
    \begin{align}
        \|v_{k+1} - x^*\| \leq M_f \alpha_k + \widetilde B . \label{vitr_bd_tdows1}
    \end{align}
    Next, we establish a bound on $\alpha_k$ for all $k \geq 1$. By the definition of $\alpha_k$ used in Algorithm~\ref{algo_tdows_feas} and applying relation~\eqref{d_and_r_bd} along with the initialization $x_0 = x_1$, we see that for all $k \geq 1$,
    \begin{align*}
        \alpha_k = \begin{cases}
        \frac{\bar{r}_k^2}{2 \sqrt{p_k} \ln \left( \frac{e p_k}{p_1} \right)} \leq \frac{\widehat B^2}{2 \sqrt{p_1}} = \frac{\widehat B^2}{2 r \|s_f(x_0)\|} \quad &\text{if $p_0 = 0$,} \\
        \frac{\bar{r}_k^2}{\sqrt{2 p_k} \ln \left( \frac{e p_k}{p_0} \right)} \leq \frac{\widehat B^2}{\sqrt{2 p_1}} = \frac{\widehat B^2}{\sqrt{2(p_0 + r^2 \|s_f(x_0)\|^2)}} \quad &\text{if $p_0 > 0$.}
    \end{cases}
    \end{align*}
    Combining the bounds of the preceding relation with relation~\eqref{vitr_bd_tdows1}, we obtain the desired bound for $\|v_{k+1} - x^*\|$ stated in the lemma. Moreover, applying the properties of feasibility updates, we surely have for all $k \geq 1$ and all $i = 1, \ldots, N_k$,
    \begin{align*}
        \|z_{k+1}^{i} - x^*\| \leq \|z_{k+1}^{i-1} - x^*\| .
    \end{align*}
    Noting  that $z_k^{0} = v_k$, we obtain the bound $\|z_{k+1}^{i} - x^*\| \leq \|v_{k+1} - x^*\|$.
\end{proof}

%%%%%%%%%%%%%%%%%%%%%%%%%%%%%%%%%%%%%%%%%%%%%%%%%%%%%%%%%%%%%%%%%%%%%%%%%%%%%%%%%%%%%%%%%%%%%%%%%%%%%%%%%%%%%%%%%%%%%%%%%%%%%%%%%%%%%%%%%%%%%%%%%%%%%%%%%%%%%%%%%%%%%%%%%%%%%%%%%%%%%%%%%%%%%%%%

\subsection{Proof of Theorem~\ref{thm_tdows}}\label{thm_tdows_proof}
\begin{proof}
    We follow a similar analysis to the proof of Theorem \ref{thm_adaptive_step} and obtain relation~\eqref{eq_main}, which is re-written below for ease of reference,
    \begin{align}
    \sum_{k=1}^{T} \bar{r}_k^2 \left( f(x_k) - f(x^*) \right)
    \leq \frac{1}{2} \sum_{k=1}^{T} \frac{\bar{r}_k^2}{\alpha_k} \left(\hat d_k^2 - \hat d_{k+1}^2 \right)
    + \frac{1}{2} \sum_{k=1}^{T} \alpha_k \bar{r}_k^2 \| s_f(x_k) \|^2 , \label{eq_main_tdows}
\end{align}
where $\hat d_k = \|x_k - x^*\|$ for all $k \geq 1$. In order to proceed further with the analysis, we consider two cases when $p_0 = 0$ and when $p_0 > 0$.

\noindent\textit{Case $p_0 = 0$:} Substituting the value of $\alpha_k$ from Algorithm~\ref{algo_tdows_feas} for the first term on the right hand side of relation~\eqref{eq_main_tdows}, we see
\begin{align*}
    \frac{1}{2} \sum_{k=1}^{T} \frac{\bar{r}_k^2}{\alpha_k} \left(\hat d_k^2 - \hat d_{k+1}^2 \right) = \sum_{k=1}^{T} \sqrt{p_k}  \ln \left( \frac{e p_k}{p_1} \right) \left(\hat d_k^2 - \hat d_{k+1}^2 \right) .
\end{align*}
Following the same lines of analysis as done in Theorem \ref{thm_adaptive_step}, we can obtain
\begin{align}
    \frac{1}{2} \sum_{k=1}^{T} \frac{\bar{r}_k^2}{\alpha_k} \left(\hat d_k^2 - \hat d_{k+1}^2 \right) \leq 4 \sqrt{p_T}  \ln \left( \frac{e p_T}{p_1} \right) \bar d_{T+1} \bar r_{T+1} , \label{1st_term}
\end{align}
where $\bar d_{T+1} = \max_{1 \leq k \leq T+1} \hat d_k$. Now the second quantity on the right hand side of relation~\eqref{eq_main_tdows} can be simplified using the definition of $\alpha_k$ and using similar proof lines as Theorem \ref{thm_adaptive_step} yielding
\begin{align}
    \frac{1}{2} \sum_{k=1}^{T} \alpha_k \bar{r}_k^2 \| s_f(x_k) \|^2 = \frac{1}{4} \sum_{k=1}^{T} \frac{\bar{r}_k^4 \| s_f(x_k) \|^2}{\sqrt{p_k} \ln \left( \frac{e p_k}{p_1} \right)} \leq \frac{\bar{r}_{T+1}^2}{4 \ln \left( \frac{e p_1}{p_1} \right)} \times 2 (\sqrt{p_T} - \sqrt{p_0}) = \frac{\bar{r}_{T+1}^2 \sqrt{p_T}}{2} . \label{2nd_term}
\end{align}
Moreover, the quantity $p_T$ can also be upper estimated as 
\begin{align}
    p_{T} \leq \bar{r}_{T+1}^2 M_f^2 T . \label{p_term}
\end{align}
Hence, substituting relations~\eqref{1st_term}, \eqref{2nd_term} and \eqref{p_term} back to relation~\eqref{eq_main_tdows}, we can obtain
\begin{align*}
    \sum_{k=1}^{T} \bar{r}_k^2 \left( f(x_k) - f(x^*) \right) \leq \bar r_{T+1}^2 M_f \sqrt{T} \left( 4 \ln \left( \frac{e \bar r_{T+1}^2 M_f^2 T}{p_1} \right) \bar d_{T+1} + \frac{\bar r_{T+1}}{2} \right) .
\end{align*}
Next, we apply Lemma~\ref{lem_bounded_itr_tdows} to upper estimate $\bar r_{T+1}$ and $\bar d_{T+1}$ in the quantity $4 \ln \left( \frac{e \bar r_{T+1}^2 M_f^2 T}{p_1} \right) \bar d_{T+1} + \frac{\bar r_{T+1}}{2}$ to yield
\begin{align}
    \sum_{k=1}^{T} \bar{r}_k^2 \left( f(x_k) - f(x^*) \right) \leq \bar r_{T+1}^2 M_f \sqrt{T} \left( 4 \ln \left( \frac{e \widehat B^2 M_f^2 T}{p_1} \right) \widetilde B + \frac{\widehat B}{2} \right) , \label{tdows_eqn1}
\end{align}
where $\widehat B := \max\{B + \|x_0\|, 4 \|x_0 - x^*\|\}$ and $\widetilde B := \max\{B + \|x^*\|, 3 \|x_0 - x^*\|\}$. Moreover, from Algorithm~\ref{algo_tdows_feas} $p_1 = p_0 + \bar r_1^2 \|s_f(x_1)\|^2$. By initialization, $p_0 = 0$ and $x_1 = x_0$. Also $\bar r_1 = \max\left\{ \|x_1 - x_0\|, \bar{r}_{0} \right\} = r$. Hence, we can obtain
\begin{align*}
    p_1 = r^2 \|s_f(x_0)\|^2 .
\end{align*}
Here, we assume that $\|s_f(x_0)\| \neq 0$, else we are already at a solution. So now, applying the preceding relation back to relation~\eqref{tdows_eqn1}, we obtain
\begin{align*}
    \sum_{k=1}^{T} \bar{r}_k^2 \left( f(x_k) - f(x^*) \right) \leq \bar r_{T+1}^2 M_f \sqrt{T} \left( 4 \ln \left( \frac{e \widehat B^2 M_f^2 T}{r^2 \|s_f(x_0)\|^2} \right) \widetilde B + \frac{\widehat B}{2} \right) .
\end{align*}
By following similar steps as in the proof of Theorem~\ref{thm_adaptive_step}, we obtain the desired result.

\noindent\textit{Case $p_0 > 0$:} The proof for this case follows the same lines as the case $p_0 = 0$, but it leads to different constants, as presented in the theorem.
\end{proof}

%%%%%%%%%%%%%%%%%%%%%%%%%%%%%%%%%%%%%%%%%%%%%%%%%%%%%%%%%%%%%%%%%%%%%%%%%%%%%%%
%%%%%%%%%%%%%%%%%%%%%%%%%%%%%%%%%%%%%%%%%%%%%%%%%%%%%%%%%%%%%%%%%%%%%%%%%%%%%%%%%%%%%%%%%%%%%%%%%%%%%%%%%%%%%%%%%%%%%%%%%%%%%%%%%%%%%%%%%%%%%%%%%%%%%%%%%%%%%%
%%%%%%%%%%%%%%%%%%%%%%%%%%%%%%%%%%%%%%%%%%%%%%%%%%%%%%%%%%%%%%%%%%%%%%%%%%%%%%%%%%%%%%%%%%%%%%%%%%%%%%%%%%%%%%%%%%%%%%%%%%%%%%%%%%%%%%%%%%%%%%%%%%%%%%%%%%%%%%
%%%%%%%%%%%%%%%%%%%%%%%%%%%%%%%%%%%%%%%%%%%%%%%%%%%%%%%%%%%%%%%%%%%%%%%%%%%%%%%

\section{Detailed Experimental Setup and Results for Section~\ref{sec:simulation}}\label{app_simulation}

\subsection{Quadratically Constrained Quadratic Programming (QCQP) Problem}\label{app_qcqp}

The QCQP problem stated in Section~\ref{sec:simulation} is
\begin{align*}
    &\min_{x \in [-10,10]^{10}} \la x, A x \ra + \la b, x \ra \\
    & \text{s.t. } \la x , C_i x \ra + \la u_i , x \ra - e_i  \leq 0 \quad \text{for } i = 1, \ldots, m. 
\end{align*}
Here, $g_i(x) = \la x , C_i x \ra + \la u_i , x \ra - e_i$. The matrices $A$ and $C_i$ for all $i = 1, \ldots, m$ are generated using the same technique as in \cite{chakraborty2025randomized, chakraborty2024popov}. First, a random matrix is generated and decomposed using QR decomposition. A diagonal matrix $\Lambda$ with the desired range of eigenvalues is then selected, and the final matrix is formed as $Q \Lambda Q^\top$. The matrices $\{C_i\}_{i=1}^m$ are constructed with eigenvalues in $[0,2]$, while the matrix $A$ is constructed with eigenvalues in $[1,10]$ for the strongly convex case ($\mu = 1$ and $L = 10$) and $[0,10]$ for the convex objective function case ($L = 10$). The vectors $b$ and $\{u_i\}_{i=1}^m$ are generated by sampling from a standard normal distribution $\mathcal{N}(0,1)$. The generation of $e_i$ will be specified later on a case-by-case basis. We simulate our Algorithms~\ref{algo_grad_descent} (only for the strongly convex case), \ref{algo_dows_feas}, and \ref{algo_tdows_feas} with randomized feasibility (Algorithm~\ref{algo_rand_feas}) using a sample size schedule $N_k = \lceil k^{1/2} \rceil$, and compare them with other state-of-the-art algorithms. These include the method from \cite{nedic2019random}, which corresponds to Algorithm~\ref{algo_grad_descent} with $\alpha_k = \tfrac{4}{\mu(k+1)}$ and $N_k = \lceil k^{1/2} \rceil$ (only for the strongly convex case); the Arrow-Hurwicz primal-dual Lagrangian methods from \cite{he2022convergence} and \cite{zhang2022near} (referred to as Alt-GDA); and ADMM with log-barrier penalty functions \cite{yang2022solving} (referred to as ACVI). We also compare these methods with CVXPY~\cite{diamond2016cvxpy, agrawal2018rewriting} (\url{https://www.cvxpy.org/}), which uses interior-point solvers. For simplicity, we refer to the algorithm in \cite{he2022convergence} as Arrow-Hurwicz and the algorithm in \cite{zhang2022near} as Alt-GDA.

%%%%%%%%%%%%%%%%%%%%%%%%%%%%%%%%%%%%%%%%%%%%%%%%%%%%%%%%%%%%%%%%%%%%%%%%%%%%%%%%%%%%%%%%%%%%%%%%%%%%%%%%%%%%%%%%%%%%%%%%%%%%   TABLE %%%%%%%%%%%%%%%%%%%%%%%%%%%%%%%%%%%%%%%%%%%%%%%%%%%%%%%%%%%%%%%%%%%%%%%%%%%%%%%%%%%%%%%%%%%%%%%%%%%%%%%%%%%%%%%%%%%%%%%%%%%%%%%%%%%%%%%%%%%%%%%%%%%%%%%%%%%%%%%%%%%%%%%%%%%%%%%%%%%%%%%%%%%%%%%%%%%%%%%%%%%%%%%%%%%%%%%%%%%%%%

\begin{table}[h!]
\centering
\caption{Run time for a single experiment} \label{tab_runtime}
\begin{tabular}{l c}
\hline
\textbf{Method} & \textbf{Run time} \\
\hline
\cite{nedic2019random} & $\sim$ 250 ms \\
Arrow-Hurwicz~\cite{he2022convergence} & $\sim$ 5 s 580 ms \\
Alt-GDA~\cite{zhang2022near} & $\sim$ 5 s 580 ms \\
ACVI~\cite{yang2022solving} & $\sim$ 3 mins 50 s \\
CVXPY & $\sim$ 1 s 500 ms \\
Algorithm~\ref{algo_grad_descent} (Ours) & $\sim$ 250 ms \\
Algorithm~\ref{algo_dows_feas} (Ours) & $\sim$ 250 ms \\
Algorithm~\ref{algo_tdows_feas} (Ours) & $\sim$ 250 ms \\
\hline
\end{tabular}
\end{table}

%%%%%%%%%%%%%%%%%%%%%%%%%%%%%%%%%%%%%%%%%%%%%%%%%%%%%%%%%%%%%%%%%%%%%%%%%%%%%%%%%%%%%%%%%%%%%%%%%%%%%%%%%%%%%%%%%%%%%%%%%%%%%%%%%%%%%%%%%%%%%%%%%%%%%%%%%%%%%%%%%%%%%%%%%%%%%%%%%%%%%%%%%%%%%

%%%%%%%%%%%%%%%%%%%%%%%%%%%%%%%%%%%%%%%%%%%%%%%%%%%%%%%%%%%%%%%%%%%%%%%%%%%%%%%%%%%%%%%%%%%%%%%%%%%%%%%%%%%%%%%%%%%%%%%%%%%%%%%%%%%%%%%%%%%%%%%%%%%%%%%%%%%%%%%%%%%%%%%%%%%%%%%%%%%%%%%%%%%%%%%%%%%%%%%%%%%%%%%%%%%%%%%%%%%%%%%%%%%%%%%%%%%%%%%%%%%%%%%%%%%%%%%%%%%%%%%%%%%%%%%%%%%%%%%%%%%%%%%%%%%%%%%%%%%%%%%%%%%%%%%%%%%%%%%%%%%%%%%%%%%%%%%%%%
\begin{figure*}[t]\centering
	\begin{subfigure}{0.32\textwidth}
		\includegraphics[width=\linewidth, height = 0.75\linewidth]
		{sc_known_func.pdf}
		\caption{$f$ strongly convex, $f^*$ known}
		\label{st_cvx_known_func}
	\end{subfigure}
	\begin{subfigure}{0.32\textwidth}
		\includegraphics[width=\linewidth,height = 0.75\linewidth]
		{sc_known_infeas.pdf}
		\caption{$f$ strongly convex, $f^*$ known}
		\label{st_cvx_known_infeas}
	\end{subfigure}
    \begin{subfigure}{0.32\textwidth}
		\includegraphics[width=\linewidth, height = 0.75\linewidth]
		{sc_unknown_func.pdf}
		\caption{$f$ strongly convex, $f^*$ unknown}
		\label{st_cvx_unknown_func}
	\end{subfigure}
	\begin{subfigure}{0.32\textwidth}
		\includegraphics[width=\linewidth,height = 0.75\linewidth]
		{sc_unknown_infeas.pdf}
		\caption{$f$ strongly convex, $f^*$ unknown}
		\label{st_cvx_unknown_infeas}
	\end{subfigure}
    \begin{subfigure}{0.32\textwidth}
		\includegraphics[width=\linewidth, height = 0.75\linewidth]
		{c_unknown_func.pdf}
		\caption{$f$ convex, $f^*$ unknown}
		\label{cvx_unknown_func}
	\end{subfigure}
	\begin{subfigure}{0.32\textwidth}
		\includegraphics[width=\linewidth,height = 0.75\linewidth]
		{c_unknown_infeas.pdf}
		\caption{$f$ convex, $f^*$ unknown}
		\label{cvx_unknown_infeas}
	\end{subfigure}
\caption{Simulation plots for the QCQP problem which is also presented in Section~\ref{sec:simulation}.}
\label{simulation_qcqp_full_duplicate}%\vspace{-5mm} 
\end{figure*}
%%%%%%%%%%%%%%%%%%%%%%%%%%%%%%%%%%%%%%%%%%%%%%%%%%%%%%%%%%%%%%%%%%%%%%%%%%%%%%%%%%%%%%%%%%%%%%%%%%%%%%%%%%%%%%%%%%%%%%%%%%%%%%%%%%%%%%%%%%%%%%%%%%%%%%%%%%%%%%%%%%%%%%%%%%%

Since Algorithm~\ref{algo_rand_feas} requires random selection of constraint indices, we run $5$ independent experiments for Algorithms~\ref{algo_grad_descent}, \ref{algo_dows_feas}, \ref{algo_tdows_feas}, and the method in \cite{nedic2019random} (which corresponds to Algorithm~\ref{algo_grad_descent} with $\alpha_k = \tfrac{4}{\mu(k+1)}$). For our algorithms, we show the performance of the averaged iterates as per our theorems. For diminishing $\alpha_k$, we apply~\cite{nedic2019random}[Theorem 3] and use the average $\bar x_k = \tfrac{\sum_{t=1}^k (t+1)^2 x_t}{\sum_{i=1}^k (i+1)^2}$. For CVXPY, Arrow-Hurwicz, Alt-GDA, and ACVI, running a single experiment is sufficient because they process all constraints in each iteration and do not involve random selection. We consider the normal averaging $\bar x_k = \tfrac{1}{T} \sum_{t=1}^k x_k$ for Arrow-Hurwicz and Alt-GDA. The ACVI algorithm, involves inner iteration over an index $K$ and an outer iteration over $T$, which is the total number of iterations. For ACVI, we consider averaged iterate over the inner iteration $K$ for each update in the outer iteration loop. For all the algorithms considered, we plot the values of $|f(\bar{x}_k) - f^*|$ and the infeasibility proxy $\sum_{i=1}^m g_i^+(\bar{x}_k)$, where $\bar{x}_k$ denotes the averaged iterate of the respective method. Error bars are shown for all methods that use Algorithm~\ref{algo_rand_feas}. All the algorithms are run for $T=1000$ iterations and for $m = 1,000$ constraints. We provide a runtime comparison of all methods in Table~\ref{tab_runtime}. Note that since we use approximately the same setup with the same number of constraints for all the cases considered later, the runtime of the different methods follows a similar trend across those cases.

For Algorithm~\ref{algo_rand_feas}, we consider $\beta = 1$. For Algorithm~\ref{algo_grad_descent}, we consider $\epsilon = 10^6$ in the step size selection for all the strongly-convex cases. For Algorithms~\ref{algo_dows_feas} and \ref{algo_tdows_feas}, we choose $p_0 = 0$ and $r = 10^{-1}$ for all the cases. For Arrow-Hurwicz method, we select the primal and dual step sizes to be $\tfrac{1}{\sqrt{T}}$ for all the experiments, where $T = 1000$ is the number of iterations as per the details of \cite{he2022convergence}. For Alt-GDA, we select both primal and dual step sizes to be $\tfrac{\mu}{2L^2}$ for strongly convex objective function \cite{zhang2022near}[Theorem 4] and $\tfrac{1}{2L}$ for convex objective function~\cite{zhang2022near}[Corollary 3]. For ACVI algorithm, the parameters are very difficult to tune as a slightly wrong parameter leads to divergence of the algorithm. As per the notations of \cite{yang2022solving}, we select the number of inner iterations $K=5$ and $\delta = 0.5$. The choice of $\beta^{\rm ACVI}$ and $\mu_{-1}^{\rm ACVI}$ in the ACVI algorithm varies across cases and is therefore provided separately for each case below. Across all simulations, the Arrow–Hurwicz and Alt-GDA algorithms perform poorly in terms of function value convergence, although their feasibility performance remains reasonable.

\paragraph{Case 1: Strongly Convex Objective Function ($f^*$ known)} 

In order to understand the performance of all the methods, we want to know the optimal function value. Hence we choose $\{e_i\}_{i=1}^m$ in such a way so that the minimum of the unconstrained problem $x_{\rm opt} = - (A+A^T) b$ is the minimum of the constrained problem, i.e., $x^* = x_{\rm opt}$ and $f^* = f(x_{\rm opt})$. We choose $e_i = \la x_{\rm opt}, C_i x_{\rm opt} \ra + \la u_i, x_{\rm opt} \ra + l_i$, where the scalar $l_i$ is sampled from a uniform distribution in $[1,2]$, which ensures the presence of interior points in the constrained set. With such a construction, Assumptions~\ref{asum_set1}, \ref{asum_err_bd}, \ref{asum_lipschitz}, \ref{asum_strong_convex}, and \ref{asum_set2} are satisfied. Note that while computing the average iterate $\bar x_k$ in Algorithm~\ref{algo_grad_descent} for the strongly convex case (cf. Theorem~\ref{thm_epsilon_tolerance}), we require $\bar \alpha$, which depends on $M_f$ and estimating it in real settings can be computationally cumbersome. For the purpose of simulation, we considered a proxy estimate of $\bar \alpha$ as
\[
\bar{\alpha}(T) = \min\!\left\{ \frac{1}{2(L-\mu)}, \frac{1}{L}, \frac{\epsilon}{2 \max_{1 \le t \le T} \|\nabla f(x_t)\|^2} \right\}.
\]

For this case, we initially considered $m = 10{,}000$ constraints for $x \in \mathbb{R}^{100}$. However, CVXPY failed to provide a solution and reported an error indicating that the matrix $A$ was not positive semidefinite, even though it was. In contrast, the algorithms that use randomized feasibility (Algorithm~\ref{algo_rand_feas}) produced solutions very quickly. The primal--dual and ADMM methods required significantly more time. Furthermore, when we increased the number of constraints to $m = 100{,}000$ for a decision of a lower dimension $x \in \mathbb{R}^{10}$, CVXPY continued to attempt to solve the problem without producing any output even after several hours. Additionally, we considered the state-of-the-art solver IPOPT~\cite{wachter2006implementation}. However, it also fails to produce results for $m = 10{,}000$ constraints with the decision vector $x \in \mathbb{R}^{100}$. Therefore, to ensure that all algorithms could be executed within a reasonable time, we considered a computationally inexpensive setting with $m = 1,000$ constraints for $x \in \mathbb{R}^{10}$.

We choose $\beta^{\rm ACVI} = 0.08$ and $\mu_{-1}^{\rm ACVI} = 10^{-5}$ for the ACVI algorithm. CVXPY solver provides the exact optimal solution $f^*$ that is known to us. The convergence of all the algorithms is shown in Figures~\ref{st_cvx_known_func} and \ref{st_cvx_known_infeas}. The ACVI algorithm, which is based on an interior-point method, converges the fastest and reaches $0$ infeasibility gap. Furthermore, we observe that Algorithm~\ref{algo_grad_descent} converges linearly in function value and saturates at the lowest achievable value. Since, in this case, all algorithms converge quickly in terms of the infeasibility gap, Figure~\ref{st_cvx_known_infeas} is plotted for only 100 iterations to highlight the decay behavior of the different methods. We observe that the Alt-GDA algorithm exhibits significant oscillations.

\paragraph{Case 2: Strongly Convex Objective Function ($f^*$ unknown)}

We use the preceding setup with the constants $\{e_i\}_{i=1}^m$ sampled from a uniform distribution on $[1,2]$. In this case, the optimal solution is not known a priori, although the construction of the constraint set ensures that the solution lies on the boundary of the feasible region. We use CVXPY to compute the exact optimal function value $f^*$ and evaluate $\lvert f(\bar{x}_k) - f^* \rvert$ for all the methods. Since the solution lies on the boundary, the ACVI algorithm tends to diverge due to its use of log-barrier penalty functions. After several trials, we set $\beta^{\mathrm{ACVI}} = 10^{-15}$ and $\mu_{-1}^{\mathrm{ACVI}} = 10$. The results are provided in Figures~\ref{st_cvx_unknown_func} and \ref{st_cvx_unknown_infeas}. Algorithms~\ref{algo_grad_descent} (with both adaptive and diminishing step sizes), \ref{algo_dows_feas}, \ref{algo_tdows_feas}, and ACVI converge to nearly the same objective function value, with Algorithms~\ref{algo_dows_feas} and \ref{algo_tdows_feas} performing slightly better in this scenario.

\paragraph{Case 3: Convex Objective Function ($f^*$ unknown)} The setup is the same as in the preceding case, except that the matrix $A$ has eigenvalues in $[0,10]$. We compute $f^*$ using CVXPY and use it to evaluate $\lvert f(\bar{x}_k) - f^* \rvert$ for the other algorithms. We choose $\beta^{\mathrm{ACVI}} = 10^{-15}$ and $\mu_{-1}^{\mathrm{ACVI}} = 10$ after trial and error. The simulation plots are presented in Figures~\ref{cvx_unknown_func} and \ref{cvx_unknown_infeas}. Since the objective function is only convex, we simulate only our Algorithms~\ref{algo_dows_feas} and \ref{algo_tdows_feas}, both of which converge to the same threshold as ACVI. Algorithms~\ref{algo_dows_feas} and \ref{algo_tdows_feas} are easy to implement since no prior knowledge on the problem parameters are essential, whereas tuning the hyper-parameters in ACVI is sometimes extremely difficult.

We also illustrate through simulations that increasing the number of samples $N_k$ decreases the infeasibility gap for Case 3 of Algorithm~\ref{algo_dows_feas}; the results are shown in Figure~\ref{dows_infeas}. Similar behavior is observed for the other algorithms and cases as well. Specifically, we choose $N_k = N + \lceil \sqrt{k} \rceil$ and perform multiple sweeps over $N \in \{10,20,40,50,60,80,100\}$. As $N$ increases, the saturation level decreases, consistent with our theoretical results for the randomized feasibility method. We observe that the infeasibility gap initially decreases to $0$ but later increases again for smaller values of $N$. We believe this occurs because the solution may lie close to the boundary, and sampling too few constraints can lead to violations. In contrast, for larger values of $N$, this behavior is not observed within $1000$ iterations.

\begin{figure}[t]
    \centering
    \includegraphics[width=0.6\linewidth]{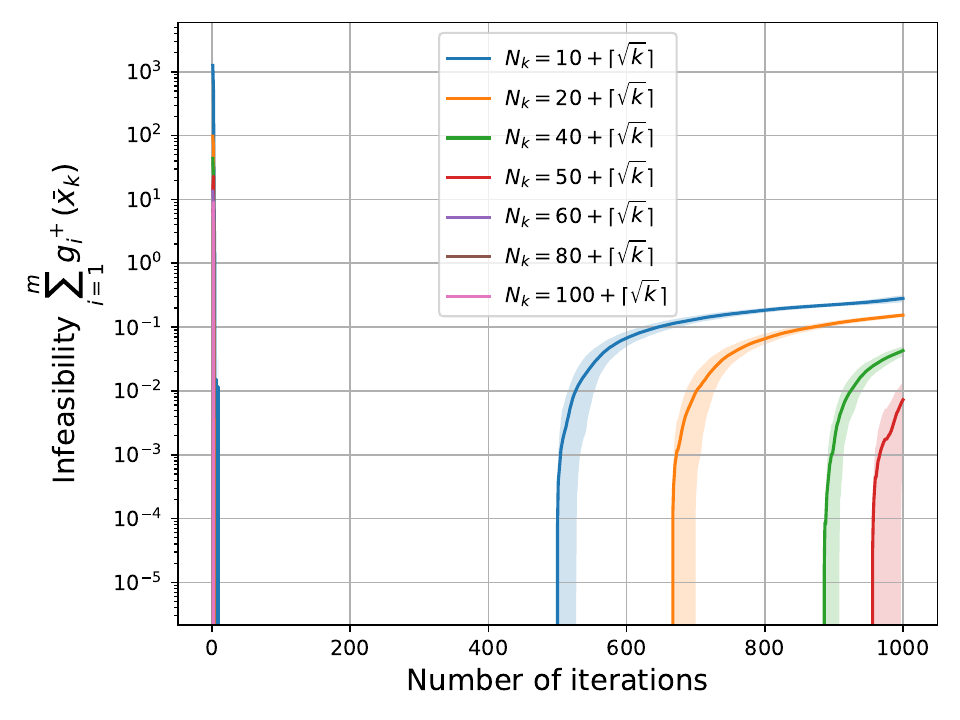}
    \caption{Infeasibility plot for Algorithm~\ref{algo_dows_feas} (Case 3) with increasing sample size $N_k$.}
    \label{dows_infeas}
\end{figure}

%%%%%%%%%%%%%%%%%%%%%%%%%%%%%%%%%%%%%%%%%%%%%%%%%%%%%%%%%%%%%%%%%%%%%%%%%%%%%%%%%%%%%%%%%%%%%%%%%%%
%%%%%%%%%%%%%%%%%%%%%%%%%%%%%%%%%%%%%%%%%%%%%%%%%%%%%%%%%%%%%%%%%%%%%%%%%%%%%%%

%%%%%%%%%%%%%%%%%%%%%%%%%%%%%%%%%%%%%%%%%%%%%%%%%%%%%%%%%%%%%%%%%%%%%%%%%%%%%%%%%%%%%%%%%%%%%%%%%%%%%%%%%%%%%%%%%%%%%%%%%%%%%%%%%%%%%%%%%%%%%%%%%%%%%%%%%%%%%%%%%%%%%%%%%%%%%%%%%%%%%%%%%%%%%%%%%%%%%%%%%%%%
\begin{figure*}[t]\centering
	\begin{subfigure}{0.32\textwidth}
		\includegraphics[width=\linewidth, height = 0.75\linewidth]
		{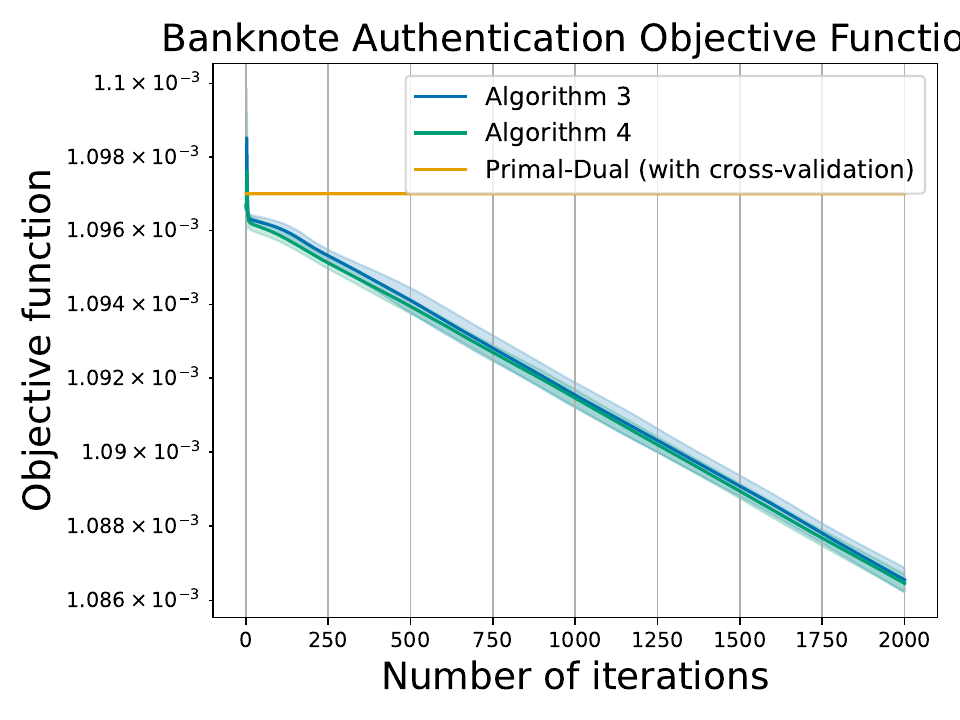}
	\end{subfigure}
	\begin{subfigure}{0.32\textwidth}
		\includegraphics[width=\linewidth,height = 0.75\linewidth]
		{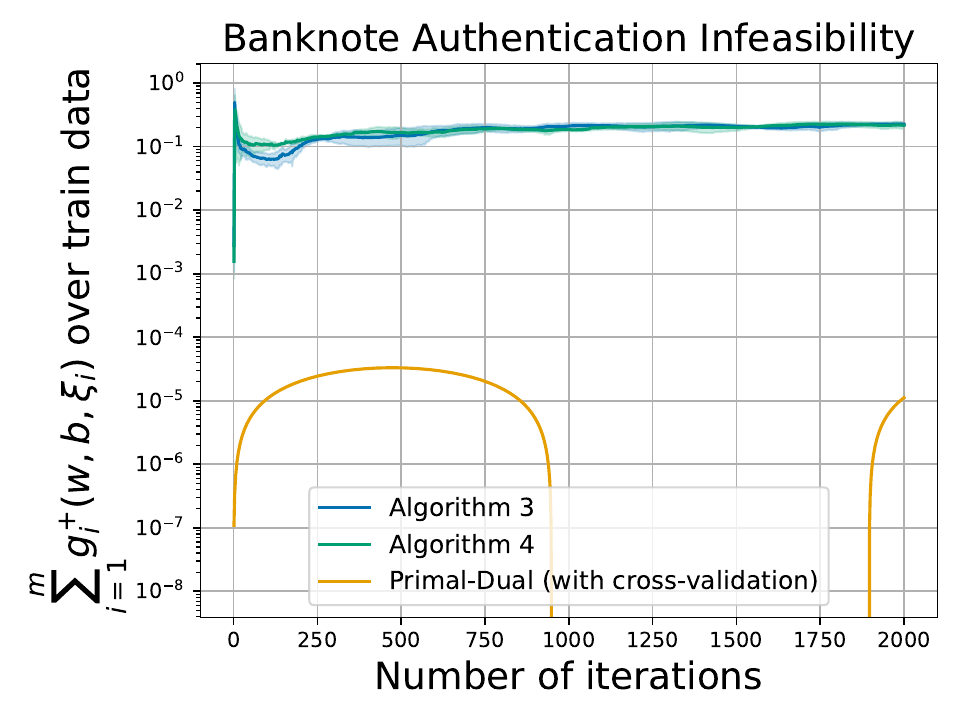}
	\end{subfigure}
    \begin{subfigure}{0.32\textwidth}
		\includegraphics[width=\linewidth, height = 0.75\linewidth]
		{test_misclasf_banknote.pdf}
	\end{subfigure}
    \begin{subfigure}{0.32\textwidth}
		\includegraphics[width=\linewidth, height = 0.75\linewidth]
		{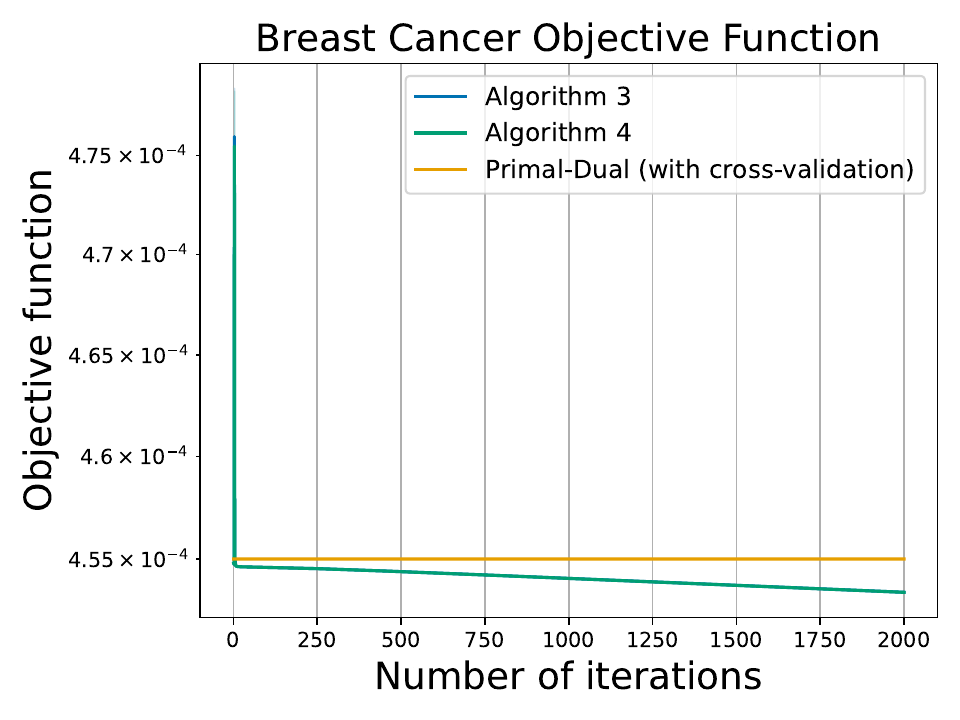}
	\end{subfigure}
	\begin{subfigure}{0.32\textwidth}
		\includegraphics[width=\linewidth,height = 0.75\linewidth]
		{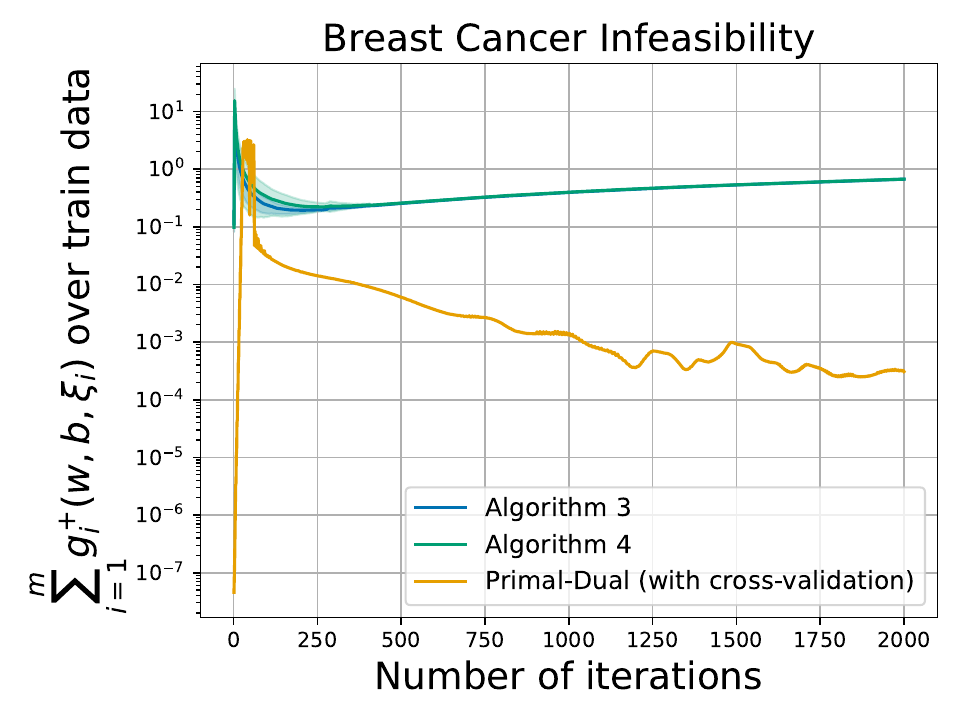}
	\end{subfigure}
    \begin{subfigure}{0.32\textwidth}
		\includegraphics[width=\linewidth, height = 0.75\linewidth]
		{test_misclasf_breast_cancer.pdf}
	\end{subfigure}
    \begin{subfigure}{0.32\textwidth}
		\includegraphics[width=\linewidth, height = 0.75\linewidth]
		{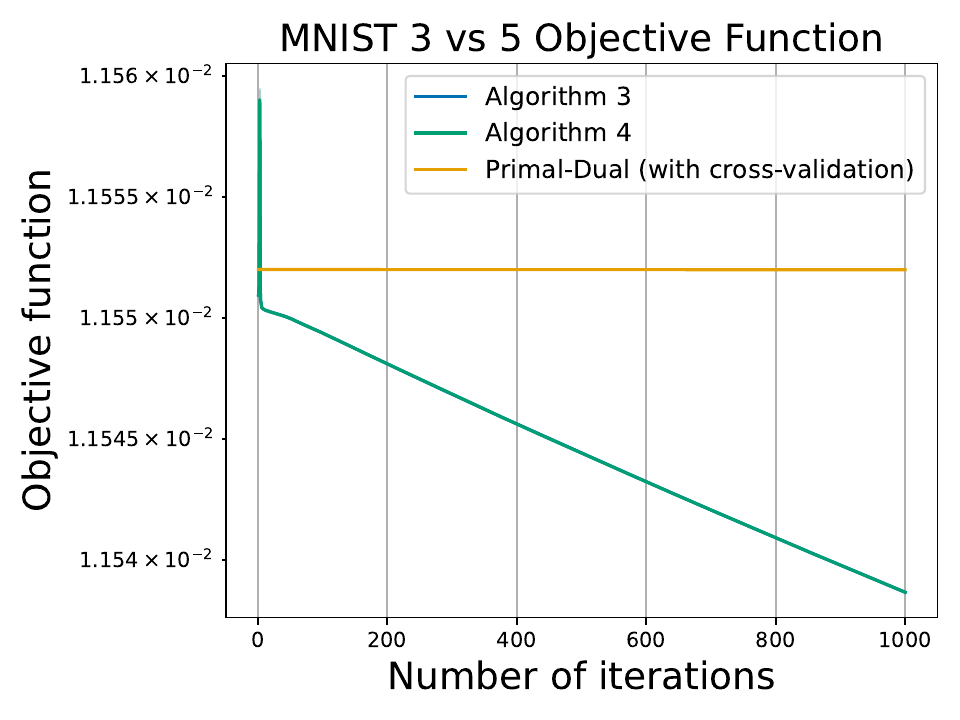}
	\end{subfigure}
	\begin{subfigure}{0.32\textwidth}
		\includegraphics[width=\linewidth,height = 0.75\linewidth]
		{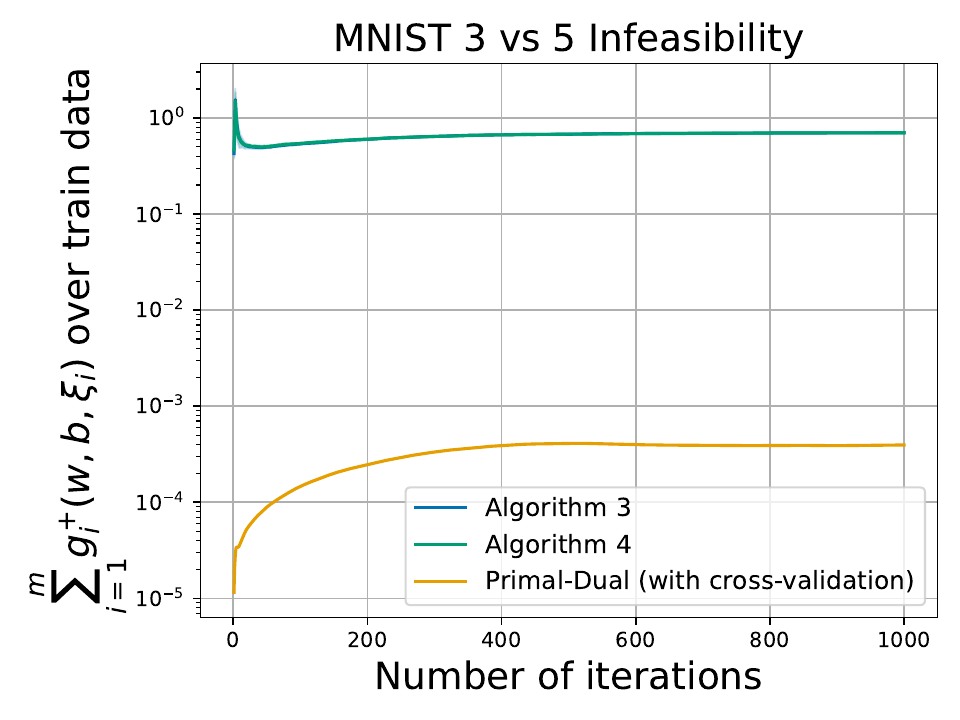}
	\end{subfigure}
    \begin{subfigure}{0.32\textwidth}
		\includegraphics[width=\linewidth, height = 0.75\linewidth]
		{test_misclasf_mnist.pdf}
	\end{subfigure}
    \caption{Objective function value, training feasibility violation, and test misclassification error for Algorithms~\ref{algo_dows_feas} and \ref{algo_tdows_feas}, and the primal–dual method with step sizes chosen via $3$-fold cross-validation.}
\label{simulation_svm_duplicate}%\vspace{-5mm} 
\end{figure*}
%%%%%%%%%%%%%%%%%%%%%%%%%%%%%%%%%%%%%%%%%%%%%%%%%%%%%%%%%%%%%%%%%%%%%%%%%%%%%%%%%%%%%%%%%%%%%%%%%%%%%%%%%%%%%%%%%%%%%%%%%%%%%%%%%%%%%%%%%%%%%%%%%%%%%%%%%%%%%%%%%%%%%%%%%%%%%%%%%%%%%%%%%%%%%%%%%%%%%%%%%%%%

\subsection{Support Vector Machine (SVM) Classification Problem}

The problem considered here is the soft-margin SVM problem given by
\begin{align*}
    \min_{w, b, \xi} \quad & \frac{1}{2}\|w\|^2 + C \sum_{i=1}^m \xi_i \\
    \text{subject to} \quad 
    & g_i(w,b,\xi_i) \leq 0 \;, \quad \xi_i \ge 0,\quad i = 1, \ldots, m ,
\end{align*}
where $g_i(w,b,\xi_i) = 1 - \xi_i - y_i (w^\top z_i + b)$. The feature vectors are denoted as $z_i$ with $y_i \in \{-1, +1\}$ are its corresponding class labels. Hence, it is a binary classification problem. Let, $\xi = [\xi_1, \ldots, \xi_m]^\top$ are the slack variables that measure margin violations, and $C>0$ is a hyperparameter controlling the tradeoff between margin size and misclassification penalties. When all slack variables satisfy $\xi_i = 0$, the formulation reduces to the classic hard-margin SVM problem. Since the optimization involves the variables $w$, $b$, and $\xi$, we collect them into a single decision vector $x = [w,\, b,\, \xi]^\top$.

We use the regularizer constant $C = 10^{-6}$. Since the objective function is convex with respect to the variable $x$, we implement only Algorithms~\ref{algo_dows_feas} and \ref{algo_tdows_feas}, using the initial diameter estimate $\bar r_0 = r = 10^{-2}$, and choose $\beta = 1$ for Algorithm~\ref{algo_rand_feas}. We compare our methods with primal–dual Lagrangian-based approaches. The Arrow-Hurwicz method~\cite{he2022convergence} and the Alt-GDA method~\cite{zhang2022near} do not converge with the theoretical step sizes proposed in those works, whereas our methods employ adaptive step sizes and converge without requiring knowledge of problem parameters. Since the Arrow–Hurwicz and Alt-GDA methods fail to converge with their theoretical step sizes, we performed $3$-fold cross-validation on the training dataset and conducted a grid search over three candidate primal and dual step sizes for $200$ iterations to identify step sizes that yield low misclassification error. This grid-search procedure introduces additional computational overhead for the primal-dual methods.

We consider the following datasets: (i) Banknote Authentication (4-dimensional features with 1097 training and 275 test data points)~\cite{banknote_authentication_267}, used to classify genuine and forged banknotes; (ii) Breast Cancer Wisconsin (30-dimensional features with 455 training and 114 test data points)~\cite{breast_cancer_wisconsin_diagnostic_1993}, used to classify benign and malignant tumors; and (iii) MNIST digits 3 and 5 classification (784-dimensional features with 11{,}552 training and 1{,}902 test data points)~\cite{lecun2010mnist}. The Banknote Authentication dataset is linearly separable; therefore, a small sample size of $N_k = 50$ works well. On the other hand, to achieve good performance on the Breast Cancer Wisconsin and MNIST 3 vs.\ 5 datasets, we choose $N_k = 2000$ and $N_k = 10000$, respectively. In general, a larger sample size leads to lower test misclassification error.

Since primal-dual methods process all data points in a single update step, whereas Algorithm~\ref{algo_rand_feas} processes randomly sampled data points sequentially, the computational time per iteration of the primal-dual methods is lower than that of Algorithms~\ref{algo_dows_feas} and \ref{algo_tdows_feas}. However, primal-dual methods require cross-validation to select suitable step sizes, and performing multiple such comparisons incurs significant additional computational cost. Therefore, in the SVM experiments, we do not compare the computational time of all these methods, as such a comparison would not be fair.

Figure~\ref{simulation_svm_duplicate} indicates that, across all datasets, the objective function decreases most rapidly for Algorithms~\ref{algo_dows_feas} and \ref{algo_tdows_feas} compared to the primal–dual algorithm. However, in terms of the infeasibility gap on the training dataset and the test misclassification error, the primal–dual algorithm performs better, as its step sizes are selected via cross-validation. Algorithms~\ref{algo_dows_feas} and \ref{algo_tdows_feas} nevertheless achieve competitive performance in terms of test misclassification error on all three datasets.

%%%%%%%%%%%%%%%%%%%%%%%%%%%%%%%%%%%%%%%%%%%%%%%%%%%%%%%%%%%%%%%%%%%%%%%%%%%%%%%
%%%%%%%%%%%%%%%%%%%%%%%%%%%%%%%%%%

\subsection{Constrained Logistic Regression with Group Fairness Constraints}\label{app_fairness_simulation}

For data generation, we considered $\widehat N = 10{,}000$ samples with feature dimension $n = 500$, and partitioned the data into $G = 10$ groups. We initially generated $m = 3000$ fairness constraints. Since each absolute-value fairness constraint is represented by two linear inequalities, the resulting feasibility problem contains $2m$ convex inequality constraints.

However, IPOPT failed to converge within a reasonable time and frequently crashed for this larger problem size. Since the IPOPT solution is used to estimate the baseline value $f^*$, we reduced the number of fairness constraints to $m = 1000$, yielding a total of $2m = 2000$ convex inequality constraints, for which IPOPT could successfully return a solution.

For Algorithms~\ref{algo_dows_feas} and~\ref{algo_tdows_feas}, we chose
\begin{align*}
N_k = 100 + \lceil k^{1/1.1} \rceil.
\end{align*}

The CPU runtime of all the methods are reported in Table~\ref{tab_runtime_fairness}.

%%%%%%%%%%%%%%%%%%%%%%%%%%%%%%%%%%%%%%%%%%%%%%%%%%%%%%%%%%%%%%%%%%%%%%%%%%%%%%%%%%%%%%%%%%%%%%%%%%%%%%%%%%%%%%%%%%%%%%%%%%%%
%%%%%%%%%%%%%%%%%%%%%%%%%%%%%%%%%%%%%%%%%%%%%%%%%%%%%%%%%%%% TABLE %%%%%%%%%%%%%%%%%%%%%%%%%%%%%%%%%%%%%%%%%%%%%%%%%%%%%%%%%%
%%%%%%%%%%%%%%%%%%%%%%%%%%%%%%%%%%%%%%%%%%%%%%%%%%%%%%%%%%%%%%%%%%%%%%%%%%%%%%%%%%%%%%%%%%%%%%%%%%%%%%%%%%%%%%%%%%%%%%%%%%%%

\begin{table}[h!]
\centering
\caption{Run time for a single experiment}
\label{tab_runtime_fairness}
\begin{tabular}{l c}
\hline
\textbf{Method} & \textbf{Run time} \\
\hline
Arrow-Hurwicz & $\sim$ 8 secs \\
IPOPT & $\sim$ 77 secs \\
Algorithm~\ref{algo_dows_feas} (Ours) & $\sim$ 17 secs \\
Algorithm~\ref{algo_tdows_feas} (Ours) & $\sim$ 17 secs \\
\hline
\end{tabular}
\end{table}

%%%%%%%%%%%%%%%%%%%%%%%%%%%%%%%%%%%%%%%%%%%%%%%%%%%%%%%%%%%%%%%%%%%%%%%%%%%%%%%
%%%%%%%%%%%%%%%%%%%%%%%%%%%%%%%%%%%%%%%%%%%%%%%%%%%%%%%%%%%%%%%%%%%%%%%%%%%%%%%

\end{document}